\theoremstyle{plain}
\newtheorem{theorem}{Theorem}[section]
\newtheorem{proposition}[theorem]{Proposition}
\newtheorem{corollary}[theorem]{Corollary}
\newtheorem{lemma}[theorem]{Lemma}
\theoremstyle{definition}
\newtheorem{example}[theorem]{Example}
\newtheorem{remark}[theorem]{Remark}
\theoremstyle{remark}
\numberwithin{equation}{section}
\newcommand{\R}{\mathbb R}
\newcommand{\C}{\mathbb C}
\newcommand{\Id}{\mathrm{Id}}
\newcommand{\GL}{\operatorname{GL}}
\newcommand{\SO}{\operatorname{SO}}
\newcommand{\Spin}{\operatorname{Spin}}
\newcommand{\spec}{\operatorname{Spec}}
\newcommand{\Hom}{\operatorname{Hom}}
\newcommand{\End}{\operatorname{End}}
\newcommand{\Dom}{\operatorname{Dom}}
\newcommand{\sgn}{\operatorname{sgn}}
\newcommand{\abs}{\operatorname{abs}}
\newcommand{\dd}{\mathrm{d}}
\newcommand{\IC}{\mathbb{C}}
\newcommand{\IR}{\mathbb{R}}
\DeclareMathOperator{\mabs}{abs}
\DeclareRobustCommand\abs[1]{\left\vert #1 \right\vert}
\def\rm{\mathrm}
\def\bf{\mathbf}
\newcommand{\ICC}{C^{\infty}}
\newcommand{\loc}{\mathrm{loc}}
\newcommand{\ILL}{\mathscr{L}}
\newcommand{\IHH}{\mathscr{H}}
\newcommand{\IFF}{\mathscr{F}}
\newcommand{\dom}{\mathrm{Dom}}
\newcommand{\IN}{\mathbb{N}}
\newcommand{\IP}{\mathbb{P}}
\newcommand{\transport}{\slash\slash}
\newcommand{\slim}{\mathop{\rm{st}\rule[0.5ex]{1ex}{0.1ex}\mathrm{lim}}}
\newcommand\newdot{{\kern.8pt\cdot\kern.8pt}}
\def\nbull{{\raise1.5pt\hbox{\bf .}}}
\def\transport{/\hspace*{-3pt}/}
\title{Scattering Theory and Spectral Stability under a Ricci Flow for Dirac Operators }
\author{Sebastian Boldt}
\address{Mathematisches Institut\\Universit\"at Leipzig\\04081 Leipzig\\Germany}
\email{boldt@math.uni-leipzig.de}
\author{Batu G\"uneysu}
\address{Mathematisches Institut\\Humboldt-Universit\"at zu Berlin\\10099 Berlin}
\email{gueneysu@math.hu-berlin.de }
\begin{document}

\begin{abstract}
	Given a noncompact spin manifold $M$ with a fixed topological spin structure and two complete Riemannian metrics $g$ and $h$ on $M$ with bounded sectional curvatures, we prove a criterion for the existence and completeness of the wave operators $\mathscr{W}_{\pm}(D_h, D_g, I_{g,h})$ and $\mathscr{W}_{\pm}(D_h^2, D^2_g, I_{g,h})$, where $I_{g,h}$ is the canonically given unitary map between the underlying $L^2$-spaces of spinors. This criterion does not involve any injectivity radius assumptions and leads to a criterion for the stability of the absolutely continuous spectrum of a Dirac operator and its square under a Ricci flow.   
\end{abstract}

\subjclass[2010]{Primary 35P25, Secondary 53C27, 58J65}

\maketitle

\tableofcontents

\section{Introduction}

Assume that $M$ is a noncompact spin manifold with a fixed topological spin structure and that $g$ and $h$ are complete Riemannian metrics  on $M$ with the induced Dirac operators $D_g$ and $D_h$, acting in their respective Hilbert space of square integrable spinors $\Gamma_{L^2}(M,\Sigma_g(M))$ and $\Gamma_{L^2}(M,\Sigma_h(M))$. This paper deals with the following question\footnote{The basic concepts of scattering theory needed in this paper are summarized in section \ref{saab}.}:\vspace{1mm}

\emph{Which smallness assumptions on $g$, $h$ and the deviation of $g$ from $h$ guarantee the existence and completeness of the wave operators $\mathscr{W}_{\pm}(D_h,D_g;I_{g,h})$ and $\mathscr{W}_{\pm}(D_h^2,D_g^2;I_{g,h})$?}\vspace{1mm}

Above, 
$$
I_{g,h}:\Gamma_{L^2}(M,\Sigma_g(M))\longrightarrow \Gamma_{L^2}(M,\Sigma_h(M)
$$
denotes the canonically given map which is induced from writing $h$ as a multiplicative perturbation of $g$ (cf. section \ref{prel} for a precise definition of this identification operator). In particular, the above problem is a genuine two-Hilbert-space scattering problem. As usual in scattering theory, the existence and completeness of $\mathscr{W}_{\pm}(D_h,D_g;I_{g,h})$ (resp. $\mathscr{W}_{\pm}(D_h^2,D_g^2;I_{g,h})$) implies that the absolutely continuous spectra of $D_g$ and $D_h$ (resp. of $D_g^2$ and $D_h^2$) are equal, thus any solution of the above problem automatically provides a contribution to the spectral geometry of noncompact manifolds, although of course the existence and completeness of wave operators is a much stronger statement than the equality of the absolutely continuous spectra.\vspace{3mm}

In order to formulate our main result, given a Riemannian metric $g$ on $M$ we denote by $R_g$ its curvature tensor, with $\nabla^g$ its Levi-Civita connection, with $B_g(x,r)$ the open geodesic balls, and with $\mu_g$ the volume measure. If $g$ is complete, we further set
\begin{align*}
\Psi_g:M\longrightarrow \IR, \quad \Psi_g(x):=\big(1+\max_{y\in B_g(x,1)}|\nabla^gR_g(y)|\big)^2.
\end{align*}

Assuming now $g$ and $h$ are Riemannian metrics on $M$, define a fiberwise positive endomorphism 
$$
\mathscr{A}^g_h:TM\longrightarrow TM,\quad h(X_1,X_2)=g(\mathscr{A}^g_hX_1,X_2),
$$
which is self-adjoint with respect to $g$ and $h$. By taking the fiberwise operator norm\footnote{Here it is irrelevant whether the fiberwise operator norm is taken w.r.t.\ $g$ or w.r.t.\ $h$, as by the self-adjointness $\mathscr{A}^g_h$, this number is just the largest eigenvalue on the given fiber} of a certain normalization of $\mathscr A^g_h$ we get a function
$$
\delta_{g,h} : M\longrightarrow \R
$$
that measures the deviation of the metrics at a zeroth order level (cf.\ section \ref{prel} for a precise definition). In order to measure a first order deviation of the metrics, define the function
$$
\omega_{g,h}:M\longrightarrow \R\,,\quad \omega_{g,h}(x):= |\nabla^h-\nabla^g|_g(x)\,.
$$

Using these functions, we finally define the weight functions
\begin{align*}
\Psi^1_{g,h}(x)&:=\max\big(\delta_{g,h}(x)^2,\omega_{g,h}(x)^2, \delta_{g,h}(x)\Psi_g(x)\big)\,,\\
\Psi^2_{g,h}(x)&:=\max\big(\omega_{g,h}(x), \delta_{g,h}(x)\Psi_g(x),\delta_{g,h}(x)\Psi_h(x)\big)\,.
\end{align*}
Now our main results read as follows (cf. Theorem \ref{main}):\vspace{3mm}

Assume $g$ and $h$ are complete and quasi-isometric, and there exists a constant $C$ with $|\omega_{g,h}|+|R_g|+|R_h|\leq C$.\vspace{1mm}

a) If for some $t>0$ and some (and then by quasi-isometry: both) $j\in \{g,h\}$ one has
\[
	\int_M \frac{\Psi^{(1)}_{g,h}(x)}{\mu_j(B_j(x,\sqrt{t}))} \dd \mu_j(x)<\infty\,,
\] 
then the wave-operators $\mathscr{W}_\pm(D_h,D_g,I_{g,h})$ exist and are complete and one has $\spec_{\mathrm{ac}}(D_h) = \spec_{\mathrm{ac}}(D_{g})$.\vspace{2mm}

b) If for some $t>0$ and some (and then by quasi-isometry: both) $j\in \{g,h\}$ one has
\[
	\int_M \frac{\Psi^{(2)}_{g,h}(x)}{\mu_j(B_j(x,\sqrt{t}))} \dd \mu_j(x)<\infty\,,
\] 
then the wave-operators $\mathscr{W}_\pm(D^2_h,D^2_g,I_{g,h})$ exist and are complete and one has $\spec_{\mathrm{ac}}(D_h^2) = \spec_{\mathrm{ac}}(D_{g}^2)$.\vspace{3mm}

To the best of our knowledge, this result is even philosphically entirely new, in the sense that for the first time \emph{arbitrary metric perturbations of Dirac operators} are being treated. There are only two comparable results we are aware of. One is the scattering theory for the Hodge-Laplacian on $k$-forms treated in \cite{bgm}, which, however only treats \emph{conformal perturbations} of the metrics, a situation which is much easier to handle, as then the analogue of $\mathscr{A}^g_h$ is a scalar factor which conveniently commutes with several data (moreover the results in \cite{bgm} lead to assumptions on the underlying injectivity radii). The other is a trace class result\footnote{from which one can deduce a scattering result by the invariance principle of the wave operators} for the differences of the semigroups induced by squares of generalized Dirac operators in \cite[Ch.\ 6, Sec.\ 2]{Eichhorn}, which comes with a rather long list of assumptions, one being \emph{bounded geometry} of the underlying manifold, making it difficult to apply. Some scattering theory for Dirac operators, which essentially treats perturbations by compact sets, can be found in \cite{bunke}, while the current state of the art concerning arbitrary metric perturbations of the scalar Laplace-Beltrami operator can be found in \cite{gt, hempel}. In any case it should be noted that, in contrast to the above references, the situation we treat in this paper is a genuine $2$-Hilbert space scattering problem, in the sense that not only the scalar products in the underlying Hilbert spaces are changed, but also the underlying spaces themselves differ (since, given a fixed spin structure, any two different metrics leads to different spinor bundles). This fact requires some considerable extra machinery the complexity of which is reflected ultimately by results such as Theorem \ref{thm:dirac-hpw-formula} and Theorem \ref{thm:dirac-hpw-formula2} below.\vspace{2mm}

The main strenght of our result is given by the fact that we do not have to impose and control on the underlying injectivity radii. We achieve this by using parabolic methods, rather then elliptic methods. More precisely, keeping the Belopolskii-Birmann theorem in mind (cf. section \ref{saab}), the main step is to prove that for a) the operator
$$
\mathscr{R}_{g,h,t}:= D_h\exp(-tD^2_{h}) I_{g,h} \exp(-tD^2_{g}) - \exp(-tD^2_{h}) I_{g,h} \exp(-tD^2_{g}) D_g
$$
is trace class, while for b) that the operator
$$
\mathscr{T}_{g,h,t}:= D^2_h\exp(-tD^2_{h}) I_{g,h} \exp(-tD^2_{g}) - \exp(-tD^2_{h}) I_{g,h} \exp(-tD^2_{g}) D^2_g
$$
is trace class. To achieve this, we use the machinery for metric perturbations of Riemannian spin structures by Bourguignon and Gauduchon \cite{BG92} in order to decompose these operators in a form that allows us to restrict ourselves to the derivations of Hilbert-Schmidt estimates for operators of the form $AB_j\exp(-sD^2_j)$, where $A$ is multiplication operator and $B_j$ is either $D_j$ or the Spin-Levi-Civita connection w.r.t. $j\in \{g,h\}$. The proofs of these decomposition formulae are rather technical and are the contents of Theorem \ref{thm:dirac-hpw-formula} and Theorem \ref{thm:dirac-hpw-formula2}, respectively.\\
In order to obtain Hilbert-Schmidt estimates for the operators $AB_j\exp(-sD^2_j)$, we adjust the probabilistic machinery by Driver and Thalmaier \cite{driver} to our situation: ultimately, in the spirit of the Feynman-Kac formula, it turns out that it is possible obtain path integral formulae in terms of Brownian motion for the operators $B_j\exp(-sD^2_j)$. These so called \emph{Bismut derivative formulae} involve certain stochastic processes, which reflect the underlying geometry in a very transparent way, and thus allow to obtain explicit estimates. These formulae are the content of Theorem \ref{B1} and of Theorem \ref{B2}. \vspace{2mm}

The fact that our main result does not require any control on injectivity radii makes it accessible to Ricci flow and we prove the following result in this context (cf. Theorem \ref{thm:ricci-flow}):\vspace{2mm}

Let $S>0$, $\kappa \in\R$ and let $(g_s)_{s\in[0,S]}$ be a smooth family of Riemannian metrics on $M$. Assume
	\begin{itemize}
		\item[(i)] $g_0$ is geodesically complete with $|R_0|_0\leq C <\infty$;
		\item[(ii)] $(g_s)_{s\in[0,S]}$ evolves under a Ricci type flow
		\[
			\frac{\partial}{\partial s}g_s = \kappa \mathrm{Ric}_s \text{ for all } s\in[0,S]\,;
		\]
		\item[(iii)] there exist positive constants $C_0,C_1$ such that
		\begin{align*}
			|R_s|_s \leq C_0 \quad \text{ and } \quad |\nabla^sR_s|_s\leq C_1/s \quad \text{ for all } s\in (0,S]\,.
		\end{align*}
	\end{itemize}
For every $s_0\in(0,S)$, $x\in M$, set
	\begin{align*}
			\mathcal A_{s_0}(x) &:= \sup\left\{|\mathrm{Ric}_s(v,v)|\,:\, v\in T_xM, |v|_s\leq 1, s\in[s_0,S] \right\}\,,\\
				\mathcal B_{s_0}(x) &:= \sup\{|\nabla^s_v\mathrm{Ric}_s(u,w) + \nabla^s_u\mathrm{Ric}_s(v,w)-\nabla^s_w\mathrm{Ric}_s(u,v)| \,:\, u,v,w\in T_xM,\\ &\mkern72mu|u|_s,|v|_s,|w|_s\leq 1, s\in[s_0,S] \}\,.
	\end{align*}
	
a) If for some $s_0\in (0,S)$ one has
	\[
		\int_M\! \frac{\max(\sinh\left(\tfrac n4 (S-s_0)|\kappa|\mathcal A_{s_0}(x)\right),\sinh^2\left(\tfrac n4 (S-s_0)|\kappa|\mathcal A_{s_0}(x)\right),\mathcal B^2_{s_0}(x))}{\mu_{s_0}( B_{s_0}(x,1))}\dd\mu_{s_0}(x) < \infty
	\]
	then the wave operators $\mathscr W_\pm(D_s,D_{s_0},I_{s_0,s})$ exist and are complete and one has $\spec_{\mathrm{ac}}(D_s) = \spec_{\mathrm{ac}}(D_{s_0})$ for all $s\in [s_0,S]$.\vspace{1mm}
	
	b) If for some $s_0\in (0,S)$ one has
		\begin{align*}
		\int_M\! \frac{\max\left(\sinh\left(\tfrac n4 (S-s_0)|\kappa|\mathcal A_{s_0}(x)\right),\mathcal B_{s_0}(x)\right)}{\mu_{s_0}( B_{s_0}(x,1))}\dd\mu_{s_0}(x) < \infty
	\end{align*}
	then the wave operators $\mathscr W_\pm(D^2_s,D^2_{s_0},I_{s_0,s})$ exist and are complete and one has $\spec_{\mathrm{ac}}(D^2_s) = \spec_{\mathrm{ac}}(D^2_{s_0})$ for all $s\in [s_0,S]$.
\vspace{2mm}

Note that the assumptions of this result are natural: a typical short time existence result for the Ricci flow on noncompact manifolds (see, e.g., \cite{shi}) asserts that, given any Riemannian metric $g_0$ satisfying \textrm{(i)}, there exists a solution $(g_s)_{s\in[0,S]}$ of the Ricci flow 
	\[
		\frac{\partial}{\partial s}g_s = -2 \mathrm{Ric}_s\,,\quad  s\in[0,S]\,
	\]
for some $S>0$ which satisfies \textrm{(iii)}.\vspace{3mm}

This paper is organized as follows: in section \ref{prel} we introduce the basic notions from spin geometry, allowing to introduce the elements of the Bourguignon/Gauduchon machinery. Section \ref{hpwform} deals with the proofs of the aforementioned decomposition formulae Theorem \ref{thm:dirac-hpw-formula} and Theorem \ref{thm:dirac-hpw-formula2}, and section \ref{bismut} deals with the Bismut derivative formulae Theorem \ref{B1} and Theorem \ref{B2}, as well as the resulting Hilbert-Schmidt estimates. Section \ref{mainres} is devoted to the proof of our main result Theorem \ref{main}. Finally, section \ref{ricci} is devoted to the application of our main result to the Ricci flow, that is, the proof of Theorem \ref{thm:ricci-flow}.\\
In addition, we have included an appendix which summarizes the basic concepts of stochastic analysis that are needed in the context of Bismut derivative formulae, aiming to make the paper essentially self-contained.

\vspace{5mm}
\begin{center}
	\textit{We dedicate this paper to the memory of P.\ Grabowski.}
\end{center}
\vspace{5mm}
\section{Preliminaries}\label{prel}

%We briefly explain what a topological spin structure is. First, recall the notion of a spin structure associated with a Riemannian metric $g$. This is a $\Spin(n)$-principal fibre bundle $P$ over $M$ which reduces (in the principal fibre bundle sense) the $\SO(n)$-bundle $\SO(M,g)$ of oriented $g$-orthonormal frames and for which $P_x$ is a nontrivial double cover of $\SO(M,g)_x$ for each $x\in M$. The inclusion $\SO(n)\subseteq \GL_+(n)$, where $\GL_+(n)$ is the group of $n\times n$-matrices with positive determinant, is a homotopy-equivalence (this follows from the Gram-Schmidt algorithm or, in Lie theory terms, from the Iwasawa decomposition of $\GL_+(n)$), which implies that
%\[
%	\SO(M,g) \subseteq \operatorname{OFM}
%\]
%is a homotopy-equivalence as well, where $\operatorname{OFM}$ is the $\GL_+(n)$-bundle of oriented frames.

This section serves to fix notation, describe the set-up and recall the Bourguignon-Gauduchon theory of metric variations of the (spin-)Dirac operator \cite{BG92}. We also point the reader to \cite[Appendix A]{NM07} which contains an excellently translated and extended excerpt of \cite{BG92}\footnote{Unfortunately, there are three typos in the proof of Theorem~A.8. In the first two display formulae the expression $b^h_g(X)$ needs to be replaced by $X$ and in the third display formula $\widetilde{\nabla}^\eta_{e_i}$ has to be $\widetilde{\nabla}^\eta_{H_g^{-1/2}(e_i)}$.}.

Let $M$ be a smooth oriented noncompact manifold of dimension $n\geq 2$ and let $\GL_+(M)$ denote the $\GL_+(n)$-principal bundle over $M$ of oriented frames. Then a \emph{topological spin structure on $M$} is a double cover $\widetilde{P}\to \GL_+(M)$ by a $ \widetilde{\GL_+}(n)$-principal bundle $\widetilde{P}$ over $M$, which is equivalent to $\widetilde{\GL_+}(n)\to \GL_+(n)$ when restricted to the fibers over $M$, where  the latter map denotes the universal double cover if $n\geq 3$, and the connected double cover if $n=2$. If $M$ admits a spin structure, one calls $M$ a spin manifold. This condition is equivalent to the assumption that the second Stiefel-Whitney class $w_2(TM)\in H^2(M,\mathbb{Z}_2)$ of $TM$ vanishes.\vspace{2mm}

\emph{We assume throughout that $M$ is a spin manifold and we fix a topological spin structure on $M$.}

\vspace{2mm}

Then, given a Riemannian metric $g$ on $M$, the restriction $P_g$ of $\widetilde{P}$ to $\SO(M,g)\subseteq \GL_+(M)$ becomes a \emph{Riemannian spin structure} in the usual sense (see \cite[Ch. II, Theorem 1.4]{LM89}), where $\SO(M,g)$ denotes the $\SO(n)$-principal bundle over $M$ of oriented $g$-orthonormal frames. In other words, $P_g$ is a $\Spin(n)$-principal fibre bundle over $M$ which reduces $\SO(M,g)$ in the sense of principal fibre bundle and for which each fiber $(P_g)_x$ is a nontrivial double cover of $\SO(M,g)_x$ for each $x\in M$. In particular, we canonically get the Hermitian vector bundle $\Sigma_gM\to M $ of spinors. We denote by 
$$
\nabla^g:\Gamma_{\ICC}(M,TM)\longrightarrow \Gamma_{\ICC}(M,T^*M\otimes TM)
$$
the Levi-Civita connection of $g$ and with
$$
 \widetilde{\nabla}^g:\Gamma_{\ICC}(M,TM)\longrightarrow \Gamma_{\ICC}(M,T^*M\otimes \Sigma_g M) 
$$
its lift to the spinor bundle, where we recall that although the tensor product $T^*M\otimes \Sigma_g M$ is over $\R$, the bundle carries a canonical complex structure, where complex multiplication is given by multiplication on the second factor (and likewise for $TM\otimes \Sigma_g M$). The Clifford multiplication will be denoted with 
$$
TM\otimes \Sigma_gM\longrightarrow \Sigma_g M,\quad v\otimes\varphi\longmapsto v\underset{g}{\cdot} \varphi
$$ 
and the Dirac operator with
$$
D_g=  \sum_{i=1}^n e_i\underset{g}{\cdot} \widetilde{\nabla}^g_{e_i}:\Gamma_{C^{\infty}}(M,\Sigma_g M)\longrightarrow \Gamma_{C^{\infty}}(M,\Sigma_g M),
$$
where $(e_1,\dots,e_n)$ is a local $g$-ONB. Furthermore, 
$$
\sharp^g:T^*M\longrightarrow  TM
$$
will stand for the musical isomorphism. Finally, the volume measure is denoted with $\mu_g$, leading to the complex Hilbert space of $L^2$-spinors  $\Gamma_{L^2}(M,\Sigma_g M)$.\vspace{2mm}

%There is a homotopy-equivalence \[\Spin(n)\hookrightarrow \widetilde{\GL_+}(n)\,,\] where $\widetilde{\GL_+}(n)$ is the universal (double-) cover of $\GL_+(n)$. It follows that there is exactly one $\widetilde{\GL_+}(n)$-bundle $\widetilde{P}$ which reduces $\operatorname{OFM}$ and which is homotopy-equivalent to $P$, i.e., for which the diagram
%\begin{equation*}
%	\xymatrix{
%	P\;\ar[d]\ar@{^{(}->}[r]& \widetilde{P}\ar[d]	\\
%	\SO(M,g) \ar@{^{(}->}[r]& \operatorname{OFM}
%	}
%\end{equation*}
%commutes. The bundle $\widetilde{P}$ is what we call a topological spin structure and it determines a unique spin structure for every Riemannian metric $h$ as follows. The restriction of $\widetilde{P}$ to $\SO(M,h)\subseteq \operatorname{OFM}$ is a fibrewise nontrivial double cover of $\SO(M,h)$ and thus a spin structure, see \cite[Ch. II, Theorem 1.4]{LM89}.

For two (smooth) Riemannian metrics $g$ and $h$ on $M$ we write $g\sim h$ if $g$ and $h$ are quasi-isometric, i.e., there exists $C>0$ such that 
\begin{align}\label{eqn:quasi-isometric}
	(1/C)g\leq h\leq C g
\end{align}
in the sense of quadratic forms. \vspace{2mm}

\emph{Henceforth, we fix a pair $g,h$ of quasi-isometric geodesically complete Riemannian metrics.}\vspace{2mm}

Define a section 
$$
\text{$\mathscr A:=\mathscr A^g_h$ of $\End(TM)$ by $h(X_1,X_2): = g(\mathscr AX_1,X_2)$ for all $x\in M$, $X_1,X_2\in T_xM$.} 
$$
It follows from the symmetry of $g$ and $h$ that $\mathscr A$ is self-adjoint w.r.t.\ $g$. This in turn implies that $\mathscr A$ is also self-adjoint w.r.t.\ $h$. The positive-definiteness of $h$ (or $g$) then implies that $\mathscr A$ is positive, i.e., $\mathscr A(x)$ has only positive eigenvalues for every $x\in M$. Note that the spectral calculus of $\mathscr A(x)$ is independent of any metric: we can decompose $\mathscr A(x)$ pointwise as a linear combination of its eigenprojections, which are independent of the metrics $g$ and $h$.

Now let $P$ and $Q$ be the spin structures corresponding to the metrics $g$ respectively $h$ and the topological spin structure $\widetilde{P}$. Since $\mathscr A^{-1/2}$ is a (pointwise) isometry from $(TM,g)$ to $(TM,h)$, it lifts to an $\SO(n)$-equivariant map $b^g_h$ from $\SO(M,g)$ to $\SO(M,h)$ taking an oriented ONB $(e_1,\ldots,e_n)$ of $(T_xM,g_x)$ to the oriented ONB $(\mathscr A(x)^{-1/2}e_1,\ldots,\mathscr A(x)^{-1/2}e_n)$ of $(T_xM,h_x)$. This map in turn lifts to a $\Spin(n)$-equivariant map $\beta^g_h$ from $P$ to $Q$. By equivariance, $\beta^g_h$ now extends to a fibrewise unitary isomorphism from $\Sigma_gM$ to $\Sigma_hM$. This unitary isomorphism is moreover compatible with Clifford multiplication in the following sense:
\begin{align*}
	\beta^g_h(X\underset{g}{\cdot}\sigma) = \mathscr A^{-1/2}(X)\underset{h}{\cdot}\beta^g_h(\sigma)
\end{align*}
for all $x\in M$, $X\in T_xM$ and $\sigma\in (\Sigma_gM)_x$.

We define a bounded identification operator by
\begin{equation*}\label{eqn:definition-I}
	\begin{aligned}
				I:=I_{g,h}:\Gamma_{L^2}(M,\Sigma_gM)&\longrightarrow  \Gamma_{L^2}(M,\Sigma_hM)\\
					I_{g,h}\varphi(x)&=\beta^g_h(\varphi(x))\,,
	\end{aligned}
\end{equation*}
which is well-defined since $g\sim h$. Clearly, we have
\begin{equation}\label{eqn:I-inverse-adjoint}
	\begin{aligned}
		I_{g,h}^{-1}=I_{h,g} \quad \text{ and }  \quad I_{g,h}^*\psi(x)=\varrho_{g,h}(x)I_{h,g}\psi(x)\,,
	\end{aligned}
\end{equation}
where $0<\varrho_{g,h}\in C^{\infty}(M)$ is the Radon-Nikodym density of $\mu_h$ with respect to $\mu_g$, i.e., 
$$
\dd \mu_h=\varrho_{g,h} \dd \mu_g.
$$
The density $\varrho_{g,h}$ can be expressed in terms of $\mathscr A$ by
\begin{equation}\label{eqn:varrho-A}
		\varrho_{g,h} =\det  (\mathscr A^g_h)^{1/2}\,.
\end{equation}

We denote by $\mathscr A':=(\mathscr A^g_h(x))'$ the transpose map of $\mathscr A^g_h(x)$, i.e., $(\mathscr A^g_h(x))'\varphi = \varphi\circ \mathscr A^g_h(x)$ for all $\varphi\in T^*_xM$. Note that we have $g^*(\varphi,\psi)=h^*(\mathscr A'\varphi,\psi)$, i.e., $\mathscr A'^{1/2}$ is a (pointwise) isometry from $(T^*M,g)$ to $\mathscr (T^*M,h)$, and that the spectral calculus commutes with taking transposes, e.g., $(\mathscr A^\alpha)'=(\mathscr A')^\alpha$ for $\alpha\in\R$. For later usage, we also record the following two relations,
\begin{align}\label{eqn:spectral-calculus-of-A}
	\mathscr A^{-1/2}\sharp^g=\sharp^h\mathscr A'^{1/2}\,, & & &  f(\mathscr A)\sharp^j=\sharp^jf(\mathscr A')\,.
\end{align} 
Here, $f(\mathscr A)$ and $f(\mathscr A')$ are the spectral calculi associated with $\mathscr A$ and $\mathscr A'$, respectively, and a suitable function $f$.

In the sequel, we will work on various tensor product bundles, one of which is $T^*M\otimes \Sigma_jM$ with $j\in\{g,h\}$. To not further clutter the presentation, we will implicitly use the obvious inner product on any such bundle. For example, the bundle $T^*M\otimes \Sigma_gM$ is endowed with the inner product $g^*\otimes\gamma$, where $\gamma$ is the inner product on $\Sigma_gM$.

Let us define another bounded linear identification operator by
\begin{align*}
\widetilde{I}:=\widetilde{I}_{g,h}:\Gamma_{L^2}(M,T^*M\otimes\Sigma_gM)&\longrightarrow  \Gamma_{L^2}(M,T^*M\otimes\Sigma_hM)\\
\widetilde{I}_{g,h}\varphi(x)&=\left(\mathscr A'^{1/2}\otimes\beta^g_h\right)(\varphi(x)).
\end{align*}
Analoguosly to \eqref{eqn:I-inverse-adjoint} we have
\begin{align*}
    \widetilde{I}_{g,h}^{-1}\psi(x) &= \widetilde{I}_{h,g}\psi(x) = \left(\mathscr A'^{-1/2}\otimes\beta^h_g\right)(\psi(x))\,,\\
    \widetilde{I}_{g,h}^*\psi(x)&=\varrho_{g,h}(x)(\mathscr A'^{-1/2}\otimes\beta^h_g)(\psi(x))=\varrho_{g,h}(x)(\widetilde{I}_{h,g}\psi)(x)\,.	
\end{align*}

We introduce the following 'skewed' connections,
\begin{equation*}%\label{eqn:definition-connections}
	\begin{aligned}
		{^g}\nabla^h &= \mathscr A^{1/2}\circ\nabla^h\circ\mathscr A^{-1/2}:\Gamma_{C^{\infty}}(M,TM)\longrightarrow \Gamma_{C^{\infty}}(M,T^*\otimes TM)\,,\\
		{^h}\nabla^g &= \mathscr A^{-1/2}\circ\nabla^g\circ\mathscr A^{1/2}:\Gamma_{C^{\infty}}(M,TM)\longrightarrow \Gamma_{C^{\infty}}(M,T^*\otimes TM)\,.
	\end{aligned}
\end{equation*}
These connections are $g$- resp.\ $h$-metric, hence they lift to connections of $\Sigma_gM$ resp.\ $\Sigma_hM$, where they coincide with
\begin{equation*}
	{^g}\widetilde{\nabla}^h := \beta^h_g\circ\widetilde{\nabla}^h\circ\beta^g_h :\Gamma_{C^{\infty}}(M,\Sigma_gM)\longrightarrow \Gamma_{C^{\infty}}(M,T^*\otimes \Sigma_gM)
\end{equation*}
and
\begin{equation*}
{^h}\widetilde{\nabla}^g := \beta^g_h\circ\widetilde{\nabla}^g\circ\beta^h_g :\Gamma_{C^{\infty}}(M,\Sigma_hM)\longrightarrow \Gamma_{C^{\infty}}(M,T^*\otimes \Sigma_hM)\,,
\end{equation*}respectively.

For every $X\in TM$ the difference 
\begin{equation}%\label{eqn:def-T}
	T(X):=T_{h,g}(X):={^g}\nabla^h_X-\nabla^g_X
\end{equation}
is an endomorphism of the corresponding tangent space. Since both connections are $g$-metric, $T(X)$ is skew-symmetric w.r.t.\ $g$ and we can project it to an element $\widetilde{T}(X):=\widetilde{T}_{h,g}(X)$ of the Clifford algebra $\C\ell(TM,g)$ under the map
\begin{align}\label{eqn:projection-from-end-to-clifford}
	\mathrm{pr}=	\mathrm{pr}_g:\End(TM)\cong T^*M\otimes TM \overset{\sharp^g\otimes\Id}{\longrightarrow} TM\otimes TM \overset{\pi}{\longrightarrow} \C l(M,g)\,.
\end{align}
Here, $\pi$ is the restriction of the projection from the tensor algebra bundle to the Clifford algebra bundle. 
\begin{remark}\label{rem:skewed-connection}
	\begin{enumerate}
		\item[(i)] 	Using the unique extension of each connection to the tensor algebra, it is easy to see that $T_{h,g}(X)$ is given by the following expression,
					\begin{align*}%\label{eqn:skewed-connection-long-form}
					T_{h,g}(X) = \mathscr A^{1/2}\circ\left(\nabla^g_X \mathscr A^{-1/2} + \left(\nabla^h_X-\nabla^g_X\right)\circ \mathscr A^{-1/2}  \right)\,.
					\end{align*}
		\item[(ii)] It is proved in \cite[p.~1042]{NM07} that
				\[
					{^g}\widetilde{\nabla}^h_X - \widetilde{\nabla}^g_X = \frac 14 \widetilde{T}_{h,g}(X)\underset{g}{\cdot}\,.
				\]
	\end{enumerate}

\end{remark}

We define the transformed Dirac operators
\begin{equation*}%\label{eqn:definition-transformed-dirac-ops}
	D_{g,h} = I_{h,g}^{-1}D_gI_{h,g} : \Gamma_{C^{\infty}}(M,\Sigma_hM)\longrightarrow \Gamma_{C^{\infty}}(M,\Sigma_hM)
\end{equation*}
and
\begin{equation*}
	D_{h,g} = I_{g,h}^{-1}D_hI_{g,h}: \Gamma_{C^{\infty}}(M,\Sigma_gM)\longrightarrow \Gamma_{C^{\infty}}(M,\Sigma_gM)\,.
\end{equation*}
In general, $D_{g,h}$ ($D_{h,g}$) is \textit{not} $D_h$ ($D_g$). Rather, it is an operator acting canonically on $h$-spinors ($g$-spinors) but having the same spectrum as $D_h$ ($D_g$).

By \cite[Th\'{e}or\`{e}me~20]{BG92} (see also \cite[Theorem~A.8]{NM07}) these transformed Dirac operators are given by the following expressions,
\begin{equation}\label{eqn:definition-D_h,g}
	\begin{aligned}
		D_{h,g}\varphi & = \sum_{i=1}^n e_i\underset{g}{\cdot} \widetilde{\nabla}^g_{\mathscr A^{-1/2}e_i}\varphi + \frac 14 \sum_{i=1}^n e_i\underset{g}{\cdot} \widetilde{T}_{h,g}(\mathscr A^{-1/2}e_i) \underset{g}{\cdot} \varphi\,,\\
		D_{g,h}\psi & = \sum_{i=1}^n v_i\underset{h}{\cdot} \widetilde{\nabla}^g_{\mathscr A^{1/2}v_i}\psi + \frac 14 \sum_{i=1}^n v_i\underset{h}{\cdot} \widetilde{T}_{g,h}(\mathscr A^{1/2}v_i) \underset{h}{\cdot} \psi\,,\\
	\end{aligned}
\end{equation}where $(e_1,\ldots,e_n)$ is a local $g$-ONB and $(v_1,\ldots,v_n)$ a local $h$-ONB.

\begin{remark}
	The formulae \eqref{eqn:definition-D_h,g} differ from the statement of \cite[Th\'{e}or\`{e}me~20]{BG92} in that the factor $\tfrac 14$ in front of the second sum has to be replaced by a $\tfrac 12$. As explained in \cite[Remark~A.10]{NM07}, this is due to \cite{BG92} using $\sigma\otimes\tau\mapsto \tfrac 12 \sigma\cdot\tau$ as the convention for Clifford multiplication.
\end{remark}

The following two weight functions will be central to our main result:
\begin{equation}
\begin{aligned}\label{eqn:definition-delta-omega}
\delta:=\delta_{g,h} : M&\longrightarrow \R\\
x & \mapsto 2\sinh\left(\frac{n}{4}\max_{\lambda\in\spec ( \mathscr A_{g,h}(x) ) }\left|\ln\lambda\right|\right) = \max_{\lambda\in\spec ( \mathscr A_{g,h}(x) ) }\left|\lambda^{n/4} -\lambda^{-n/4}\right|\,,\\
\omega:=\omega_{g,h} : M&\longrightarrow \R\\
x&\mapsto |\nabla^h-\nabla^g|_g(x)\,,%= \sup_{\substack{X\in T_xM\\ |X|_g=1}} |T_{h,g}(\mathscr A^{-1/2}X)|_{op} = \,.
\end{aligned}
\end{equation}
where we view $\nabla^h-\nabla^g$ as a section of the bundle $\Hom(TM,\End(TM))$ and $|\cdot|_g$ is the corresponding operator norm induced by the inner product $g$. \vspace{2mm}

\textit{In the sequel we will always assume that $\omega_{g,h}$ is bounded.}\vspace{2mm}

The following proposition establishes, in view of Remark \ref{rem:skewed-connection}(i), the connection between $T_{h,g}={}^g\nabla^h-\nabla^g$ and $\nabla^h-\nabla^g$, by showing that any pointwise bound on the latter will imply one on the former. Note that the statement is independent of the quasi-isometry of the metrics $g$ and $h$.

\begin{proposition}\label{prop:nabla-difference-bound} For all vector fields $X$ on $M$ one has the pointwise estimate
	\[
	|\nabla^g_X \mathscr A^{-1/2}|_g \leq |\mathscr A^{-1}|^{3/2} |\mathscr A| |\nabla^h_X-\nabla^g_X|_g\,,
	\] in particular,
	\[
	|T_{h,g}|_g \leq (|\mathscr A^{-1}|^{3/2}|\mathscr A|^{3/2}+|\mathscr A^{-1}|^{1/2}|\mathscr A|^{1/2})\omega_{g,h}\,.
	\]
\end{proposition}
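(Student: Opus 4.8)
The plan is to reduce everything to one pointwise algebraic identity for $\nabla^g\mathscr A$ and then to differentiate the functional calculus of $\mathscr A$. Throughout, write $S_X:=\nabla^h_X-\nabla^g_X\in\End(T_xM)$ for the (tensorial) difference of the two Levi--Civita connections, so that $|S_X|_g=|(\nabla^h-\nabla^g)(X)|_g$ and $\omega_{g,h}=\sup_{|X|_g\le 1}|S_X|_g$. \textbf{Step 1.} Fix $x$ and vector fields $X,Y,Z$ and differentiate the defining relation $h(Y,Z)=g(\mathscr A Y,Z)$ along $X$. On the left, $\nabla^h$ being $h$-metric and $h(W,Z)=g(\mathscr A W,Z)$ produce $g(\mathscr A\nabla^h_XY,Z)+g(\mathscr A Y,\nabla^h_XZ)$; on the right, $\nabla^g$ being $g$-metric together with $\nabla^g_X(\mathscr A Y)=(\nabla^g_X\mathscr A)Y+\mathscr A\,\nabla^g_XY$ produce $g((\nabla^g_X\mathscr A)Y,Z)+g(\mathscr A\nabla^g_XY,Z)+g(\mathscr A Y,\nabla^g_XZ)$. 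Cancelling the two common terms and using the definition of the $g$-adjoint $S_X^\ast$ leaves $g((\nabla^g_X\mathscr A)Y,Z)=g(\mathscr A S_XY,Z)+g(S_X^\ast\mathscr A Y,Z)$, hence
\[
\nabla^g_X\mathscr A=\mathscr A S_X+S_X^\ast\mathscr A,\qquad\text{so}\qquad |\nabla^g_X\mathscr A|_g\le 2\,|\mathscr A|\,|S_X|_g,
\]
since the $g$-operator norm is invariant under passing to the $g$-adjoint.

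\textbf{Step 2.} At $x$ the endomorphism $\mathscr A$ is positive definite, so $\mathscr A^{-1/2}=\tfrac1\pi\int_0^\infty s^{-1/2}(s\,\Id+\mathscr A)^{-1}\,\dd s$. Interchanging $\nabla^g_X$ with the integral and using $\nabla^g_X(s\,\Id+\mathscr A)^{-1}=-(s\,\Id+\mathscr A)^{-1}(\nabla^g_X\mathscr A)(s\,\Id+\mathscr A)^{-1}$ yields
\[
\nabla^g_X\mathscr A^{-1/2}=-\frac1\pi\int_0^\infty s^{-1/2}\,(s\,\Id+\mathscr A)^{-1}(\nabla^g_X\mathscr A)(s\,\Id+\mathscr A)^{-1}\,\dd s .
\]
As $\mathscr A$ is positive with smallest eigenvalue $|\mathscr A^{-1}|^{-1}$, one has $|(s\,\Id+\mathscr A)^{-1}|_g=(s+|\mathscr A^{-1}|^{-1})^{-1}$; combined with the elementary identity $\int_0^\infty s^{-1/2}(s+a)^{-2}\,\dd s=\tfrac\pi2 a^{-3/2}$ this gives $|\nabla^g_X\mathscr A^{-1/2}|_g\le\tfrac12|\mathscr A^{-1}|^{3/2}|\nabla^g_X\mathscr A|_g$, and with Step 1 we arrive at $|\nabla^g_X\mathscr A^{-1/2}|_g\le|\mathscr A^{-1}|^{3/2}|\mathscr A|\,|S_X|_g$, which is the first asserted estimate. (One may instead solve the Lyapunov equation $E\mathscr A^{1/2}+\mathscr A^{1/2}E=\nabla^g_X\mathscr A$ for $E:=\nabla^g_X\mathscr A^{1/2}$ via $E=\int_0^\infty e^{-t\mathscr A^{1/2}}(\nabla^g_X\mathscr A)\,e^{-t\mathscr A^{1/2}}\,\dd t$ and use $\nabla^g_X\mathscr A^{-1/2}=-\mathscr A^{-1/2}E\mathscr A^{-1/2}$; this gives the same constant.)

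\textbf{Step 3.} By Remark~\ref{rem:skewed-connection}(i) we have $T_{h,g}(X)=\mathscr A^{1/2}\big(\nabla^g_X\mathscr A^{-1/2}+S_X\,\mathscr A^{-1/2}\big)$. Submultiplicativity of the $g$-operator norm, the identities $|\mathscr A^{\pm1/2}|=|\mathscr A^{\pm1}|^{1/2}$, and Step 2 give
\[
|T_{h,g}(X)|_g\le|\mathscr A|^{1/2}\big(|\mathscr A^{-1}|^{3/2}|\mathscr A|+|\mathscr A^{-1}|^{1/2}\big)|S_X|_g=\big(|\mathscr A^{-1}|^{3/2}|\mathscr A|^{3/2}+|\mathscr A^{-1}|^{1/2}|\mathscr A|^{1/2}\big)|S_X|_g ,
\]
and taking the supremum over $X$ with $|X|_g\le1$ yields the claimed bound on $|T_{h,g}|_g$. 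The only delicate step is Step 2: because $\mathscr A$ and $\nabla^g_X\mathscr A$ need not commute, the chain rule is unavailable for $\mathscr A^{-1/2}$, and one must go through an integral representation (or a Daleckii--Krein/Lyapunov-type identity) to transfer a bound on $\nabla^g_X\mathscr A$ to one on $\nabla^g_X\mathscr A^{-1/2}$ while keeping the powers of $|\mathscr A|$ and $|\mathscr A^{-1}|$ sharp; everything else is routine manipulation of operator norms.
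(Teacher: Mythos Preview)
Your proof is correct, but the route differs from the paper's. The paper first differentiates $g(\mathscr A^{1/2}Y,\mathscr A^{1/2}Y)=h(Y,Y)$ to obtain the quadratic-form identity $g(\mathscr A^{1/2}(\nabla^g_X\mathscr A^{1/2})Y,Y)=g(\mathscr A\,S_XY,Y)$, then uses that $\nabla^g_X\mathscr A^{1/2}$ is itself $g$-self-adjoint to pass from this identity to an operator-norm bound via an eigenvector argument; finally it converts to $\nabla^g_X\mathscr A^{-1/2}$ through $\nabla^g_X\mathscr A^{-1/2}=-\mathscr A^{-1/2}(\nabla^g_X\mathscr A^{1/2})\mathscr A^{-1/2}$. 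You instead differentiate the defining relation of $\mathscr A$ itself to get the clean formula $\nabla^g_X\mathscr A=\mathscr A S_X+S_X^\ast\mathscr A$, and then use the resolvent integral $\mathscr A^{-1/2}=\pi^{-1}\int_0^\infty s^{-1/2}(s+\mathscr A)^{-1}\,\dd s$ to transfer the bound to the fractional power. The paper's argument is entirely elementary linear algebra, avoiding any integral representation, but it hinges on the observation that $\nabla^g_X\mathscr A^{1/2}$ is self-adjoint; your approach is more systematic and would adapt immediately to other powers $\mathscr A^{\alpha}$ or more general operator-monotone functions, at the cost of invoking a (standard) functional-calculus identity. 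Both arrive at exactly the same constant $|\mathscr A^{-1}|^{3/2}|\mathscr A|$, and Step~3 is identical in both proofs.
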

\begin{proof}
	We covariantly differentiate the identity
	\[
	\Id = \mathscr A^{1/2}\circ \mathscr A^{-1/2}
	\]	to obtain
	\[
	0 = \nabla_X^g\mathscr A^{1/2}\circ \mathscr A^{-1/2} + \mathscr A^{1/2}\circ \nabla_X^g\mathscr A^{-1/2}\,.
	\]
	Rewriting this gives
	\begin{align}\label{eqn:nabla-difference-bound-01}
	\nabla^g_X\mathscr A^{-1/2} = -\mathscr A^{-1/2}\circ \nabla^g_X\mathscr A^{1/2}\circ \mathscr A^{-1/2}\,,
	\end{align}
	which shows that any bound on $\nabla^g_X\mathscr A^{1/2}$ leads to a bound on $\nabla^g_X\mathscr A^{-1/2}$.
	
	Next, we differentiate the identity
	\[
	g(\mathscr A^{1/2}Y,\mathscr A^{1/2}Y) = h(Y,Y)
	\] in direction $X$, which yields
	\begin{align*}	
	& 2g(\nabla^g_X(\mathscr A^{1/2}Y),\mathscr A^{1/2}Y)= 2g(\nabla^g_X\mathscr A^{1/2}(Y),\mathscr A^{1/2}Y) + 2g(\mathscr A^{1/2}\nabla^g_XY,\mathscr A^{1/2}Y)\\ =& 2 h (\nabla^h_X Y,Y)\,.
	\end{align*}
	This, in turn, implies by definition of $\mathscr A$
	\[
	g(\mathscr A^{1/2}\circ \nabla_X^g\mathscr A^{1/2}(Y),Y) = g(\mathscr A\circ(\nabla_X^h-\nabla_X^g)(Y),Y)\,.
	\]
	
	A simple calculation shows that the self-adjointness of $\mathscr A^{1/2}$ implies that of $\nabla^g_X\mathscr A^{1/2}$. Fix a point $x\in M$, let $\lambda$ be an eigenvalue of $\nabla^g_X\mathscr A^{1/2}$ at $x$ with $|\nabla^g_X\mathscr A^{1/2}|_g=|\lambda|$ and $u\in T_xM$ a corresponding $g$-normalized eigenvector. Then we have
	\begin{align*}
	|g(\mathscr A^{1/2}\circ \nabla_X^g\mathscr A^{1/2}(u),u)| = |\lambda| |g(\mathscr A^{1/2}u,u)| = |\nabla_X^g\mathscr A^{1/2}|_g |g(\mathscr A^{1/2}u,u)|\geq |\nabla_X^g\mathscr A^{1/2}|_g |\mathscr A^{-1}|^{-1/2}\,,
	\end{align*}
	so that
	\begin{align*}
	&|\nabla_X^g\mathscr A^{1/2}|_g\leq |\mathscr A^{-1}|^{1/2}\sup_{|v|\leq 1}|g(\mathscr A^{1/2}\circ \nabla_X^g\mathscr A^{1/2}(v),v)| = |\mathscr A^{-1}|^{1/2}\sup_{|v|\leq 1}|g(\mathscr A\circ(\nabla_X^h-\nabla_X^g)(v),v)|\\ \leq & |\mathscr A^{-1}|^{1/2} |\mathscr A| |\nabla^h_X-\nabla^g_X|_g\,.
	\end{align*}
	Combining this with \eqref{eqn:nabla-difference-bound-01} proves the proposition.	
\end{proof}

\section{Dirac HPW-Formulae}\label{hpwform}

The goal of this section is to prove Theorems~\ref{thm:dirac-hpw-formula} and \ref{thm:dirac-hpw-formula2} below, which are decomposition forumulae that calculate the parabolic variant of the central operator of the Belopol'skii-Birman-Theorem, which in turn is one of the key ingredients in the proof of our main results. As this approach to scattering can be traced back to the case of the scalar Laplace-Beltrami-Operator considered in \cite{hempel} (where an elliptic approach is followed), we call these formulae \emph{HPW-formulae}.

First, we dissect the various Dirac operators we have defined in the last section by writing them as compositions of covariant derivatives and certain homomorphism fields. Define

\begin{align*}
	L_g\in \Gamma_{\ICC}(M,\Hom(T^*M&\otimes\Sigma_gM,\Sigma_gM)) & & & L_h\in \Gamma_{\ICC}(M,\Hom(T^*M&\otimes\Sigma_hM,\Sigma_hM)) \\
	L_g(\xi\otimes \sigma)&= \xi^{\sharp^g}\underset{g}{\cdot}\sigma\,, & & & L_h(\xi\otimes \sigma)&= \xi^{\sharp^h}\underset{h}{\cdot}\sigma\,,\\
	L_{h,g}\in \Gamma_{\ICC}(M,\Hom(T^*M&\otimes\Sigma_gM,\Sigma_gM)) & & & L_{g,h}\in \Gamma_{\ICC}(M,\Hom(T^*M&\otimes\Sigma_hM,\Sigma_hM))\\
	L_{h,g}(\xi\otimes \sigma)&=  (\mathscr A'^{-1/2}\xi)^{\sharp^g}\underset{g}{\cdot}\sigma \,, & & & L_{g,h}(\xi\otimes \sigma)&=  (\mathscr A'^{1/2}\xi)^{\sharp^h}\underset{h}{\cdot}\sigma\,.
\end{align*}
We will also need the following multiplication operators
\begin{align*}
M_{h,g}&\in \Gamma_{\ICC}(M,\End(\Sigma_gM)) & & & M_{g,h}&\in \Gamma_{\ICC}(M,\End(\Sigma_hM))\\
M_{h,g}(\sigma) &= \frac{1}{4} \sum_{i=1}^{n}e_i \underset{g}{\cdot} \widetilde{T}_{h,g}(\mathscr A^{-1/2}e_i) \underset{g}{\cdot} \sigma  \,, & & & M_{g,h}(\sigma) & = \frac 14 \sum_{i=1}^n v_i\underset{h}{\cdot}  \widetilde{T}_{g,h}(\mathscr A^{1/2}v_i) \underset{h}{\cdot}\sigma\,,\\
\end{align*}
where $(e_1,\ldots,e_n)$ and $(v_1,\ldots,v_n)$ are a $g$-ONB and a $h$-ONB, respectively, at the appropriate base point. It is easy to see that the endomorphism $M_{h,g}$ is in general not normal. Indeed, the expression $\sum_{i=1}^{n}e_i \cdot \widetilde{T}_{h,g}(\mathscr A^{-1/2}e_i)\in\C\ell(M,g)$ is a linear combination of terms of degree one or three. We therefore dissect $M_{h,g}$ into its selfadjoint and anti-selfadjoint parts,
\begin{align*}
M_{h,g} = M_{h,g}^{+}+M_{h,g}^{-} \quad\text{ with }\quad  M_{h,g}^{+} = \frac 12(M_{h,g}+M_{h,g}^*) \quad\text{ and }\quad M_{h,g}^{-} = \frac 12(M_{h,g}-M_{h,g}^*)\,,
\end{align*}
and analogously for $M_{g,h}$.

By \eqref{eqn:definition-D_h,g}, the various Dirac operators can now be written as

\begin{align*}
	D_g & = L_g\widetilde{\nabla}^g ,& & &D_h & = L_h\widetilde{\nabla}^h\,,\\
	D_{h,g} & = {L_{h,g}}\widetilde{\nabla}^g + M_{h,g},& & & {D_{g,h}} & = {L_{g,h}}\widetilde{\nabla}^h+{M_{g,h}}\,.
\end{align*}

\begin{lemma}\label{lem:fibrewise_adjoint_L}
	The fibrewise adjoints $L_h^*$ and $L_{g,h}^*$ of $L_h$ respectively $L_{g,h}$ are given by
	\begin{align*}
		L_h^*(\tau)&= -\sum_{i=1}^n \varphi_i\otimes v_i\underset{h}{\cdot}\tau\,, & & & L_{g,h}^*(\tau)&= -\sum_{i=1}^n \mathscr A'^{1/2}\varphi_i\otimes v_i\underset{h}{\cdot}\tau\,,\\
	\end{align*}
	where $(v_1,\ldots,v_n)$ is an $h$-ONB of $TM$ at the appropriate basepoint and $(\varphi_1,\ldots,\varphi_n)$ the corresponding $h$-dual ONB. Analogous formulae hold for $L_g^*$ and $L_{h,g}^*$.
\end{lemma}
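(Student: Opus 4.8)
The plan is to compute the fibrewise adjoint of $L_h$ directly from the definition, using the compatibility of Clifford multiplication with the relevant inner products, and then deduce the formula for $L_{g,h}^*$ by expressing $L_{g,h}$ in terms of $L_h$.

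First I would recall that $L_h:T^*M\otimes\Sigma_hM\to\Sigma_hM$ acts by $L_h(\xi\otimes\sigma)=\xi^{\sharp^h}\underset{h}{\cdot}\sigma$. Fix a point $x\in M$, an $h$-ONB $(v_1,\dots,v_n)$ of $T_xM$ with $h$-dual ONB $(\varphi_1,\dots,\varphi_n)$, and a general element $\xi\otimes\sigma$. Writing $\xi=\sum_i\xi(v_i)\varphi_i$, so $\xi^{\sharp^h}=\sum_i\xi(v_i)v_i$, and using the fundamental identity that Clifford multiplication by a tangent vector is skew-adjoint with respect to the spinor inner product $\gamma_h$, i.e. $\gamma_h(v\underset{h}{\cdot}\sigma,\tau)=-\gamma_h(\sigma,v\underset{h}{\cdot}\tau)$, I compute for $\tau\in(\Sigma_hM)_x$:
\begin{align*}
(g^*\otimes\gamma_h)\big(\xi\otimes\sigma, L_h^*\tau\big) &= \gamma_h\big(L_h(\xi\otimes\sigma),\tau\big) = \gamma_h\big(\xi^{\sharp^h}\underset{h}{\cdot}\sigma,\tau\big) \\
&= \sum_{i=1}^n \xi(v_i)\,\gamma_h\big(v_i\underset{h}{\cdot}\sigma,\tau\big) = -\sum_{i=1}^n \xi(v_i)\,\gamma_h\big(\sigma, v_i\underset{h}{\cdot}\tau\big)\,.
\end{align*}
Since $\xi(v_i) = h^*(\xi,\varphi_i)$, this last expression equals $(h^*\otimes\gamma_h)\big(\xi\otimes\sigma,\ -\sum_i\varphi_i\otimes v_i\underset{h}{\cdot}\tau\big)$, and because on $T^*M$ the metric used is $h^*$ (matching the $h$-ONB), matching the two sides over all $\xi\otimes\sigma$ yields $L_h^*(\tau)=-\sum_i\varphi_i\otimes v_i\underset{h}{\cdot}\tau$. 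I should take a moment to confirm that the inner product on $T^*M\otimes\Sigma_hM$ is indeed $h^*\otimes\gamma_h$ as fixed in the preliminaries, so there is no hidden discrepancy between the metric pairing the covector slot and the one appearing implicitly in $\sharp^h$.

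For $L_{g,h}$, observe that $L_{g,h}(\xi\otimes\sigma) = (\mathscr A'^{1/2}\xi)^{\sharp^h}\underset{h}{\cdot}\sigma = L_h\big((\mathscr A'^{1/2}\otimes\Id)(\xi\otimes\sigma)\big)$, i.e. $L_{g,h}=L_h\circ(\mathscr A'^{1/2}\otimes\Id)$. Hence $L_{g,h}^* = (\mathscr A'^{1/2}\otimes\Id)^*\circ L_h^*$. Now $\mathscr A'^{1/2}$ is self-adjoint as an endomorphism of $T^*M$ with respect to... here one must be careful: $\mathscr A'^{1/2}$ is an isometry from $(T^*M,g^*)$ to $(T^*M,h^*)$, and the adjoint is taken relative to the fixed inner products on the bundles. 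Taking the adjoint of $\mathscr A'^{1/2}\otimes\Id$ as a map from $(T^*M\otimes\Sigma_hM, h^*\otimes\gamma_h)$ — the domain of $L_h$ — to $(T^*M\otimes\Sigma_hM, g^*\otimes\gamma_h)$, one checks via $g^*(\mathscr A'^{1/2}\varphi,\psi) = h^*(\mathscr A'^{1/2}\varphi,\mathscr A'^{1/2}\mathscr A'^{-1/2}\psi)$ and the isometry property that the relevant adjoint is again $\mathscr A'^{1/2}$, giving $L_{g,h}^*(\tau) = (\mathscr A'^{1/2}\otimes\Id)\big(-\sum_i\varphi_i\otimes v_i\underset{h}{\cdot}\tau\big) = -\sum_i\mathscr A'^{1/2}\varphi_i\otimes v_i\underset{h}{\cdot}\tau$, as claimed. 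The formulas for $L_g^*$ and $L_{h,g}^*$ follow by the identical argument with $g$ in place of $h$ (and $\mathscr A'^{-1/2}$ in place of $\mathscr A'^{1/2}$ for $L_{h,g}=L_g\circ(\mathscr A'^{-1/2}\otimes\Id)$).

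The only genuinely delicate point — the "main obstacle", though it is more bookkeeping than difficulty — is keeping straight which metric is used where: the covector slot of $T^*M\otimes\Sigma_hM$ carries $h^*$, the musical isomorphism $\sharp^h$ is taken with respect to $h$, and $\mathscr A'^{1/2}$ is an isometry between $g^*$ and $h^*$ rather than a self-adjoint operator on a single inner product space. Once one fixes a single $h$-ONB and computes everything in that frame, the skew-adjointness of Clifford multiplication does all the real work and the signs and factors fall out immediately.
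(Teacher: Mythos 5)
Your computation of $L_h^*$ is correct and is essentially the same as the paper's (expand $\xi^{\sharp^h}$ in an $h$-ONB, use skew-adjointness of Clifford multiplication, and conclude via the $h^*\otimes\gamma_h$ inner product on $T^*M\otimes\Sigma_hM$). The $g^*$ appearing in your first display is a slip, but you correct it yourself a few lines later.

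Your argument for $L_{g,h}^*$ has the right overall shape — the decomposition $L_{g,h}=L_h\circ(\mathscr A'^{1/2}\otimes\Id)$ is exactly what the paper uses — but the justification for why $(\mathscr A'^{1/2}\otimes\Id)^*=\mathscr A'^{1/2}\otimes\Id$ is wrong as stated. Per the paper's convention, \emph{both} the domain of $L_{g,h}$ and the domain of $L_h$ are $T^*M\otimes\Sigma_hM$ endowed with the single inner product $h^*\otimes\gamma_h$; there is no second inner product $g^*\otimes\gamma_h$ in play, and $\mathscr A'^{1/2}\otimes\Id$ is simply an endomorphism of one fixed inner product space. If one literally takes the adjoint between the two differently-normed copies you propose, one obtains $\mathscr A'^{3/2}\otimes\Id$ or $\mathscr A'^{-1/2}\otimes\Id$ depending on the direction, not $\mathscr A'^{1/2}\otimes\Id$, and the displayed ``verification'' $g^*(\mathscr A'^{1/2}\varphi,\psi)=h^*(\mathscr A'^{1/2}\varphi,\mathscr A'^{1/2}\mathscr A'^{-1/2}\psi)$ is a tautology on its right-hand side and false as an identity, so it proves nothing. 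The correct (and shorter) reason is: $\mathscr A$ is self-adjoint with respect to $h$, as established in the preliminaries, hence $\mathscr A'$ — and therefore every real power of $\mathscr A'$ — is self-adjoint with respect to $h^*$; consequently $(\mathscr A'^{1/2}\otimes\Id)^*=\mathscr A'^{1/2}\otimes\Id$ on $(T^*M\otimes\Sigma_hM,\,h^*\otimes\gamma_h)$, and the claimed formula for $L_{g,h}^*$ follows immediately. (Equivalently, one can redo the $L_h^*$ computation directly for $L_{g,h}$, replacing $\xi(v_i)=h^*(\xi,\varphi_i)$ by $(\mathscr A'^{1/2}\xi)(v_i)=h^*(\mathscr A'^{1/2}\xi,\varphi_i)=h^*(\xi,\mathscr A'^{1/2}\varphi_i)$, which is exactly where the self-adjointness enters.)
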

\begin{proof}
	We calculate straightforwardly
	\begin{align*}
		\left( L_h(\xi\otimes\sigma),\tau \right) =& \left( \xi^{\sharp}\cdot\sigma,\tau \right) = \sum_{i=1}^n \left( \xi(v_i)\varphi_i^{\sharp}\cdot\sigma,\tau \right) =\sum_{i=1}^n\xi(v_i)\left( v_i\cdot\sigma,\tau   \right)\\=&-\sum_{i=1}^n  \left(\xi,\varphi_i\right)	\left( \sigma,v_i\cdot \tau   \right)
		= -\sum_{i=1}^n \left( \xi\otimes\sigma,\varphi_i\otimes v_i\cdot\tau   \right) \,,
	\end{align*}
	which proves the formula for $L_h^*$. The one for $L_{g,h}^*$ is obtained by precomposing $L_h$ with $\mathscr A'^{1/2}\otimes\Id$ and using the first relation in \eqref{eqn:spectral-calculus-of-A} and that $\mathscr A'^{1/2}$ is an isometry 
$(T^*M,g)\longrightarrow (T^*M,h)$.
\end{proof}

Now define the following smooth endomorphism fields
\begin{align*}
	K_g \in \Gamma_{\ICC}&(M,\End(T^*M\otimes \Sigma_gM)) & & & K_h \in \Gamma_{\ICC}&(M,\End(T^*M\otimes \Sigma_hM))\\
	K_g(\xi\otimes\sigma)&= -\sum_{i=1}^n \varepsilon_i\otimes e_i \underset{g}{\cdot} \xi^{\sharp^g}\underset{g}{\cdot}\sigma\,, & & & K_h(\xi\otimes\sigma)&= -\sum_{i=1}^n \varphi_i\otimes v_i \underset{h}{\cdot} \xi^{\sharp^h}\underset{h}{\cdot}\sigma\,.
\end{align*}

Denote  $\widetilde{\mathscr A'}^\alpha = \mathscr A'^\alpha\otimes\Id$, which we use as a symbol for the corresponding sections of endomorphisms of $T^*M\otimes\Sigma_gM$ and $T^*M\otimes\Sigma_hM$.

\begin{lemma}\label{lem:K_j-sa-and-commutation-formula}
	The endomomorphisms $K_g$ and $K_h$ are fibrewise selfadjoint and satisfy 
	\begin{align}\label{eqn:K_gK_h-commutation-relation}
		\left(\mathscr A'^{1/2}\otimes\beta^g_h\right) K_g=K_h\left(\mathscr A'^{1/2}\otimes\beta^g_h\right)\,.
	\end{align}
	Moreover, 
	\begin{equation}\label{eqn:fibrewise-L-difference}
		\begin{aligned}
				L_h^* \beta^g_h L_{h,g} &=  K_h\widetilde{\mathscr A'}^{-1/2}  \left(\mathscr A'^{1/2}\otimes \beta^g_h\right) = \left(\mathscr A'^{1/2}\otimes \beta^g_h\right)  K_g \widetilde{\mathscr A'}^{-1/2} \,,\\
				 L_{g,h}^* \beta^g_h L_g &=  \widetilde{\mathscr A'}^{1/2} K_h \left(\mathscr A'^{1/2}\otimes \beta^g_h\right) = \left(\mathscr A'^{1/2}\otimes \beta^g_h\right)  \widetilde{\mathscr A'}^{1/2} K_g \,.
		\end{aligned}
	\end{equation}
\end{lemma}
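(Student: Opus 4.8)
The plan is to recognise $K_g$ and $K_h$ as operators of the form $A^*A$ and then to transport the various $L$-operators through the relevant fibrewise unitaries. The first observation is that, using Lemma~\ref{lem:fibrewise_adjoint_L} (and its stated analogue for $L_g^*$) together with the associativity of Clifford multiplication,
\[
K_g=L_g^*L_g\,,\qquad K_h=L_h^*L_h\,,
\]
because, for instance, $L_g^*(L_g(\xi\otimes\sigma))=L_g^*(\xi^{\sharp^g}\underset{g}{\cdot}\sigma)=-\sum_i\varepsilon_i\otimes e_i\underset{g}{\cdot}\xi^{\sharp^g}\underset{g}{\cdot}\sigma=K_g(\xi\otimes\sigma)$, and likewise for $h$. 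In particular $K_g$ and $K_h$ are fibrewise selfadjoint, which is the first assertion with no further work.

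For the intertwining relations I would write $\widetilde\beta:=\mathscr A'^{1/2}\otimes\beta^g_h$ for the fibrewise model of $\widetilde I_{g,h}$. Since $\mathscr A'^{1/2}\colon(T^*M,g)\to(T^*M,h)$ and $\beta^g_h$ are fibrewise isometries, $\widetilde\beta$ is fibrewise unitary with $\widetilde\beta^{-1}=\widetilde\beta^{*}=\mathscr A'^{-1/2}\otimes\beta^h_g$, and it commutes with $\widetilde{\mathscr A'}^{\pm1/2}$ (both act only on the cotangent factor, where powers of $\mathscr A'$ commute). The key identity is
\[
\beta^g_h L_g=L_h\widetilde\beta\,,
\]
which follows by feeding the Clifford compatibility $\beta^g_h(X\underset{g}{\cdot}\sigma)=\mathscr A^{-1/2}(X)\underset{h}{\cdot}\beta^g_h(\sigma)$ and the first relation of \eqref{eqn:spectral-calculus-of-A}, $\mathscr A^{-1/2}\sharp^g=\sharp^h\mathscr A'^{1/2}$, into the definition of $L_g$: one gets $\beta^g_h(\xi^{\sharp^g}\underset{g}{\cdot}\sigma)=(\mathscr A'^{1/2}\xi)^{\sharp^h}\underset{h}{\cdot}\beta^g_h(\sigma)=L_h(\widetilde\beta(\xi\otimes\sigma))$. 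Taking fibrewise adjoints gives $\widetilde\beta L_g^*=L_h^*\beta^g_h$, and then \eqref{eqn:K_gK_h-commutation-relation} drops out of the previous step: $K_h\widetilde\beta=L_h^*L_h\widetilde\beta=L_h^*\beta^g_h L_g=\widetilde\beta L_g^*L_g=\widetilde\beta K_g$.

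For \eqref{eqn:fibrewise-L-difference} I would additionally use the factorisations $L_{h,g}=L_g\,\widetilde{\mathscr A'}^{-1/2}$ (immediate from the definitions) and $L_{g,h}^*=\widetilde{\mathscr A'}^{1/2}L_h^*$ (read off from Lemma~\ref{lem:fibrewise_adjoint_L}). Combining these with $L_h^*\beta^g_h L_g=L_h^*L_h\widetilde\beta=K_h\widetilde\beta$ from the previous step yields
\[
L_h^*\beta^g_h L_{h,g}=K_h\widetilde\beta\,\widetilde{\mathscr A'}^{-1/2}\,,\qquad L_{g,h}^*\beta^g_h L_g=\widetilde{\mathscr A'}^{1/2}K_h\widetilde\beta\,,
\]
and the remaining equalities in \eqref{eqn:fibrewise-L-difference} follow by commuting $\widetilde\beta$ past $\widetilde{\mathscr A'}^{\pm1/2}$ and substituting $K_h\widetilde\beta=\widetilde\beta K_g$.

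I expect the only genuinely delicate point — and the step I would check most carefully — to be the bookkeeping of \emph{which} bundle ($T^*M\otimes\Sigma_gM$ or $T^*M\otimes\Sigma_hM$) each occurrence of $\widetilde{\mathscr A'}^{\pm1/2}$ and of $\widetilde\beta$ is taken on, so that all the commutations and cancellations above are legitimate. Once the identities $K_j=L_j^*L_j$ and $\beta^g_h L_g=L_h\widetilde\beta$ are in place, everything else is purely formal.
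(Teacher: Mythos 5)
Your proof is correct, and it takes a genuinely different route from the paper's. The paper proves selfadjointness of $K_j$ by expanding $(K_g(\xi\otimes\sigma),\xi\otimes\sigma)$ in a basis and observing the result is real, proves \eqref{eqn:K_gK_h-commutation-relation} by a second direct basis computation (choosing $\mathscr A^{-1/2}e_i=v_i$), and proves the first identity in \eqref{eqn:fibrewise-L-difference} by yet another explicit computation, with the remaining equalities deferred to \eqref{eqn:K_gK_h-commutation-relation} and "analogy." Your proof instead isolates two structural facts from which everything follows algebraically: the factorization $K_j=L_j^*L_j$ (immediate from the definitions together with Lemma~\ref{lem:fibrewise_adjoint_L} and its stated analogue) and the intertwining identity $\beta^g_h L_g=L_h\widetilde\beta$ with $\widetilde\beta=\mathscr A'^{1/2}\otimes\beta^g_h$. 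Selfadjointness is then free, \eqref{eqn:K_gK_h-commutation-relation} is $K_h\widetilde\beta=L_h^*L_h\widetilde\beta=L_h^*\beta^g_h L_g=\widetilde\beta L_g^*L_g=\widetilde\beta K_g$, and the two lines of \eqref{eqn:fibrewise-L-difference} drop out of the factorizations $L_{h,g}=L_g\widetilde{\mathscr A'}^{-1/2}$ and $L_{g,h}^*=\widetilde{\mathscr A'}^{1/2}L_h^*$ together with the commutation of $\widetilde\beta$ with $\widetilde{\mathscr A'}^{\pm1/2}$. This buys a more conceptual, less basis-dependent argument and makes the logical dependencies explicit (one key identity plus adjoints/unitarity), whereas the paper's version is shorter to typeset locally but requires re-deriving the same structure three times. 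The bookkeeping caveat you flag at the end — that $\widetilde{\mathscr A'}^{\pm1/2}$ denotes an endomorphism on whichever of the two bundles $T^*M\otimes\Sigma_{g}M$ or $T^*M\otimes\Sigma_{h}M$ is the current domain — is indeed the only place where one must be careful, and you handle it correctly.
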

\begin{proof}
	We have
	\begin{multline*}
		\left( K_g(\xi\otimes\sigma),\xi\otimes\sigma\right) = -\sum_{i=1}^n \left( \varepsilon_i,\xi\right)\left( e_i\cdot \xi^\sharp\cdot \sigma,\sigma\right) = -\sum_{i,j=1}^n \left( \varepsilon_i,\xi\right)\left( \varepsilon_j,\xi\right)\left( e_i\cdot e_j\cdot \sigma,\sigma\right)  \\= \sum_{i,j=1}^n \left( \varepsilon_i,\xi\right)\left( \varepsilon_j,\xi\right)\left( e_j\cdot \sigma,e_i\cdot \sigma\right)
		=|\xi|^2|\sigma|^2+2\sum_{i<j}\left( \varepsilon_i,\xi\right)\left( \varepsilon_j,\xi\right)\Re\left( e_j\cdot \sigma,e_i\cdot \sigma\right)\,,
	\end{multline*}
	which is real. Hence, $K_g$ is selfadjoint. The calculation for $K_h$ is entirely analogous.
	
	Assume w.l.o.g.\ that $\mathscr A^{-1/2}e_i=v_i$ for all $1\leq i\leq n$. Then we also have $\mathscr A'^{1/2}\varepsilon_i=\varphi_i$. Hence
	\begin{align*}
		(\mathscr A'^{1/2}\otimes\beta^g_h)K_g(\xi\otimes\sigma) &= -\sum_{i=1}^{n} \mathscr A'^{1/2}(\varepsilon_i)  \otimes\beta^g_h(e_i\underset{g}{\cdot}\xi^{\sharp^g}\underset{g}{\cdot}\sigma) = -\sum_{i=1}^{n} \varphi_i  \otimes  \mathscr A^{-1/2}e_i \underset{h}{\cdot}  \mathscr A^{-1/2 }\xi^{\sharp^g}\underset{h}{\cdot}\beta^g_h(\sigma)\\
		&=  -\sum_{i=1}^{n} \varphi_i  \otimes  v_i \underset{h}{\cdot}  \mathscr A^{-1/2 }\xi^{\sharp^g}\underset{h}{\cdot}\beta^g_h(\sigma) =  -\sum_{i=1}^{n} \varphi_i  \otimes  v_i \underset{h}{\cdot}   \mathscr A'^{1/2 }\xi^{\sharp^h}\underset{h}{\cdot}\beta^g_h(\sigma)\\
		&=K_h (\mathscr A'^{1/2}\otimes\beta^g_h)(\xi\otimes\sigma)\,,
	\end{align*}  
	where we have used the first relation in \eqref{eqn:spectral-calculus-of-A}.
	
	We prove the first equality in the first line of \eqref{eqn:fibrewise-L-difference}. By the definition of $L_h$ and $L_{h,g}$ and by Lemma~\ref{lem:fibrewise_adjoint_L} we have
	\begin{align*}
		L_h^* \beta^g_h L_{h,g}(\xi\otimes\sigma) &= L_h^*\beta^g_h( (\mathscr A'^{-1/2}\xi)^{\sharp^g}\underset{g}{\cdot}\sigma) = L_h^*( \mathscr A^{-1/2}(\mathscr A'^{-1/2}\xi)^{\sharp^g}\underset{h}{\cdot}\beta^g_h(\sigma))\\
		&= L_h^*( \xi^{\sharp^h}\underset{h}{\cdot}\beta^g_h(\sigma)) =  -\sum_{i=1}^n \varphi_i\otimes v_i\underset{h}{\cdot}\xi^{\sharp^h}\underset{h}{\cdot}\beta^g_h(\sigma)\\
		&= K_h \widetilde{\mathscr A'}^{-1/2} \left(\mathscr A'^{1/2}\otimes\beta^g_h\right)\,, 
	\end{align*}
	where we have used once more \eqref{eqn:spectral-calculus-of-A}. The second equality in the first line of \eqref{eqn:fibrewise-L-difference} follows from \eqref{eqn:K_gK_h-commutation-relation}. The proof of the second line in \eqref{eqn:fibrewise-L-difference} is analogous.
\end{proof}

\begin{remark}
	If we identify $T^*M$ with $TM$ using the metric $j\in\{g,h\}$, we recognize $K_j$ from its definition as a multiple of the projection onto the orthogonal complement of the kernel of Clifford multiplication, cf.~\cite[p.~69]{Fr00}, from which we could have also deduced selfadjointness.
\end{remark}

%We denote by the same symbols $L_g,L_h,L_{g,h},L_{h,g},M_{g,h}$ and $M_{h,g}$ the induced operators acting on the corresponding $L^2$-sections.

%\subsection*{Auxiliary Operators}

To state the main results of this section we need to introduce several functions, sections and operators. To this end, we denote by $\mabs:\C\to \R$ the absolute value function and by $\sgn:\C\to  \C$ the sign-function with $\sgn(0)=1$ so that we have for every diagonalizable operator $B$ on a finite dimensional vector space a decomposition $B=\mabs(B)\sgn(B)$ in which $\mabs(B)$ and $\sgn(B)$ commute, the eigenvalues of $\mabs(B)$ are nonnegative and the eigenvalues of $\sgn(B)$ have modulus one. Note that if $B$ is normal with respect to a distinguished inner product, then this is the usual polar decomposition, i.e., $\mabs(B)$ is nonnegative and $\sgn(B)$ is unitary.

\begin{align*}
	S_{g,h}:M& \longrightarrow \R &&&  \widetilde{S}_j&\in\Gamma_{C^{\infty}}(M,\End(T^*M\otimes \Sigma_jM))\\
	x&\mapsto \varrho_{g,h}(x)^{1/2}-\varrho_{g,h}(x)^{-1/2}\,, &&& \widetilde{S}_j(x) &= \widetilde{\mathscr{A}'}(x)^{-1/2} - \Id(x) = (\mathscr A'(x)^{-1/2}-\Id_{T^*M}(x))\otimes \Id_{\Sigma_jM}(x)\,,
\end{align*}
\begin{align*}
\widehat{S}_{j}&\in \Gamma_{\ICC}(M,\End(T^* M\otimes\Sigma_jM))\\
\widehat{S}_{j}(x) &= \varrho_{g,h}(x)^{1/2} K_j(x)\widetilde{\mathscr A'}(x)^{-1/2}-\varrho_{g,h}(x)^{-1/2} \widetilde{\mathscr A'}(x)^{1/2}K_j(x)\,.
\end{align*}

Note that $\widehat{S}_{j}$ is fibrewise similar to the selfadjoint endomorphism 
\[
	\varrho_{g,h}^{1/2} \widetilde{\mathscr A'}^{-1/4}K_j\widetilde{\mathscr A'}^{-1/4}-\varrho_{g,h}^{-1/2} \widetilde{\mathscr A'}^{1/4}K_j\widetilde{\mathscr A'}^{1/4}\,,
\]
and that in light of \eqref{eqn:K_gK_h-commutation-relation} we have  
\begin{equation}\label{eqn:widehat_S-A-commutator}
(\mathscr A'^{1/2}\otimes\beta^g_h)\widehat{S}_{g}=\widehat{S}_{h}(\mathscr A'^{1/2}\otimes\beta^g_h)\,.
\end{equation}

\noindent We continue with our definitions.

\begin{align*}
	S_{g,h;j}:\Gamma_{L^2}(M,\Sigma_jM)&\longrightarrow  \Gamma_{L^2}(M,\Sigma_jM) &&& \widehat{S}_{g,h;j}:\Gamma_{L^2}(T^*M\otimes\Sigma_jM)&\longrightarrow  \Gamma_{L^2}(T^* M\otimes\Sigma_jM)\\
	S_{g,h;j}\varphi(x)&= \mabs(S_{g,h}(x))^{1/2}\varphi(x)\,, &&& \widehat{S}_{g,h;j}\varphi(x)&= \mabs(\widehat{S}_j(x))^{1/2}\varphi(x)\,,\\
	Q_g : \Gamma_{L^2}(M,T^*M\otimes &\Sigma_gM)\longrightarrow \Gamma_{L^2}(M,T^*M\otimes \Sigma_gM) &&& Q_h : \Gamma_{L^2}(M,\Sigma_hM)\longrightarrow &\,\Gamma_{L^2}(M,T^*M\otimes \Sigma_hM)\\
	Q_g\varphi(x) & = \mabs(\widetilde{S}_g(x))^{1/2}\varphi(x)\,, &&& Q_h\varphi(x)  =  &\mabs(\widetilde{S}_h(x))^{1/2}L_h(x)^*\varphi(x)\,,\\
	R_g : \Gamma_{L^2}(M,\Sigma_gM)&\longrightarrow \Gamma_{L^2}(M,T^*M\otimes \Sigma_gM) &&& R_h : \Gamma_{L^2}(M,\Sigma_hM)\longrightarrow &\,\Gamma_{L^2}(M,T^*M\otimes \Sigma_hM)\\
	R_g\varphi(x) & = \frac{1}{4}\widetilde{T}_{h,g}(x)\varphi(x)\,, &&& R_h\varphi(x)  =& \mabs(\widetilde{S}_h(x))L_h(x)^*\varphi(x)\,,
\end{align*}
\begin{align*}
U_{g,h}:\Gamma_{L^2}(M,\Sigma_gM)&\longrightarrow  \Gamma_{L^2}(M,\Sigma_hM)\\
U_{g,h}\varphi(x)&= \sgn(S_{g,h}(x))\varrho_{g,h}(x)^{-1/2}\beta^g_h(\varphi(x))\,,
\end{align*}
\begin{align*}
\widehat{U}_{g,h}:\Gamma_{L^2}(M,T^*M\otimes \Sigma_gM)&\longrightarrow  \Gamma_{L^2}(M,T^*M\otimes \Sigma_hM)\\
\widehat{U}_{g,h}\varphi(x)&= \varrho_{g,h}(x)^{-1/2}\sgn(\widehat{S}_{h}(x))\widetilde{I}_{g,h}(\varphi)(x) \\
&=\varrho_{g,h}(x)^{-1/2}\widetilde{I}_{g,h}(\sgn(\widehat{S}_{g})(\varphi))(x)\,.
\end{align*}
By $g\sim h$, the operators $S_{g,h;j}$, $\widehat{S}_{g,h;j}$, $Q_j$, $R_h$, $U_{g,h}$ and $\widehat{U}_{g,h}$ are bounded. For $R_g$ to be bounded, the boundedness of $\omega_{g,h}$ is additionally needed, see Lemma~\ref{lem:tilde-T-bounds} below. Moreover, $U_{g,h}$ is always unitary whereas $\widehat{U}_{g,h}$ is only unitary if $\widehat{S}_j$ is selfadjoint for one (and then both) $j\in\{g,h\}$.

Next, define for $i\in\{+,-\}$ the operators
\begin{align*}
	T^i_g : \Gamma_{L^2}(M,\Sigma_gM)&\longrightarrow  \Gamma_{L^2}(M,\Sigma_gM) &&& T^i_{g;h} : \Gamma_{L^2}(M,\Sigma_hM)&\longrightarrow  \Gamma_{L^2}(M,\Sigma_hM)\\
	T^i_g\varphi(x) & = \mabs(M^i_{h,g}(x))^{1/2}\varphi(x)\,, &&& T^i_{g;h}\varphi(x) & = \beta^g_h(\mabs(M^i_{h,g}(x))^{1/2}\beta^h_g(\varphi(x)))\,,\\
	T^i_h : \Gamma_{L^2}(M,\Sigma_hM)&\longrightarrow  \Gamma_{L^2}(M,\Sigma_hM) &&& T^i_{h;g} : \Gamma_{L^2}(M,\Sigma_gM)&\longrightarrow  \Gamma_{L^2}(M,\Sigma_gM)\\
	T^i_h\varphi(x) & = \mabs(M^i_{g,h}(x))^{1/2}\varphi(x)\,, &&& T^i_{h;g}\varphi(x) & = \beta^h_g(\mabs(M^i_{g,h}(x))^{1/2}\beta^g_h(\varphi(x)))\,,
\end{align*}
\begin{align*}
	V^i_{g,h} : \Gamma_{L^2}(M,\Sigma_gM) & \longrightarrow  \Gamma_{L^2}(M,\Sigma_hM) &&& \widehat{V}^i_{g,h} : \Gamma_{L^2}(M,\Sigma_gM) & \longrightarrow  \Gamma_{L^2}(M,\Sigma_hM)\\
	V^i_{g,h}\varphi(x) & = \beta^g_h(\sgn(M^i_{h,g}(x))\varphi(x))\,, &&& \widehat{V}^i_{g,h}\varphi(x) = \varrho_{g,h}^{-1}&(x)\overline{\sgn}(M^i_{g,h}(x))\beta^g_h(\varphi(x))\,,\\
	W^i_{g,h} : \Gamma_{L^2}(M,\Sigma_gM) & \longrightarrow \Gamma_{L^2}(M,\Sigma_hM) &&& \widehat{W}_{g,h}:\Gamma_{L^2}(M,T^*M\otimes \Sigma_gM) & \longrightarrow \Gamma_{L^2}(M,T^*M\otimes \Sigma_hM)\\
	W^i_{g,h}\varphi(x) &= \sgn(M^i_{g,h}(x))\beta^g_h(\varphi(x))\,, &&& \widehat{W}_{g,h}\varphi(x)& = \sgn(\widetilde{S}_h(x))\widetilde{I}_{g,h}(\varphi)(x)\,,
\end{align*}
where $\overline{\sgn}$ is the complex conjugate of $\sgn$.

The operators $T^i_g, T^i_{g;h},T^i_h$ and $T^i_{h;g}$ are bounded in view of $g\sim h$ and the boundedness of $\omega_{g,h}$, see Corollary~\ref{cor:M-bounds} below. For the operators $V^i_{g,h}, \widehat{V}^i_{g,h}$, $W^i_{g,h}$ and $\widehat{W}_{g,h}$ to be bounded, $g\sim h$ is sufficient. \vspace{2mm}

We recall that if $g$ is a geodesically complete metric on $M$, then $D_g$ as well as all its powers are essentially self-adjoint in $\Gamma_{L^2}(M,\Sigma_gM)$, when defined initially on smooth compactly supported spinors and the corresponding unique self-adoint realizations will be denoted with the same symbol again. We denote by 
$$
(P^g_s)_{s>0}:=(\exp(-s D_g^2))_{s>0}
$$ 
the heat semigroup associated with $D_g^2$ in $\Gamma_{L^2}(M,\Sigma_gM)$, defined via the spectral calculus of $D_g^2$. Note that $P^g_s$ is precisely the operator $f(D)$ defined by the spectral calculus of $D$, where $f:\IR\to\IR$ is given by $f(\lambda):=e^{-t\lambda^2}$.

With these definitions, the central results of this section are given by the following two results below:

\begin{theorem}[Dirac-HPW-formula I]\label{thm:dirac-hpw-formula}
	Let $g\sim h$ be geodesically complete Riemannian metrics on $M$ such that the function $\omega_{g,h}$ is bounded. Given $s>0$ define the bounded operator 
	$$
	\mathscr{T}_{g,h,s}:\Gamma_{L^2}(M,\Sigma_gM)\longrightarrow  \Gamma_{L^2}(M,\Sigma_hM)
	$$
	by
	\begin{multline*}
		\mathscr{T}_{g,h,s}:=  (\widehat{S}_{g,h;h}\widetilde{\nabla}^hP^h_s)^*\widehat{U}_{g,h}\widehat{S}_{g,h;g}\widetilde{\nabla}^gP^g_s +(T^+_{g;h}D_hP^h_s)^*V^+_{g,h} T^+_gP^g_s+(T^-_{g;h}D_hP^h_s)^*V^-_{g,h} T^-_gP^g_s
		\\ -(T_h^+ P^h_s)^*\widehat{V}^+_{g,h}T^+_{h;g}D_gP^g_s-(T_h^- P^h_s)^*\widehat{V}^-_{g,h}T^-_{h;g}D_gP^g_s - (S_{g,h;h}P^h_s)^*U_{g,h}S_{g,h;g}P^g_{s/2}D_g^2 P^g_{s/2}\,.
	\end{multline*}
	Then the following formula holds for all $s>0$, $\varphi\in\Dom(D^2_g)$ and $\psi\in\Dom(D_h^2)$,
	\begin{align}\label{lem1:eqn:BB}
		\langle \psi,\mathscr{T}_{g,h,s}\varphi\rangle
		= \langle D_h^2\psi,P^h_sI_{g,h}P^g_s\varphi\rangle - \langle \psi,P^h_sI_{g,h}P^g_sD_g^2\varphi\rangle\,.
	\end{align}
\end{theorem}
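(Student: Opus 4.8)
The basic idea is to compute the right-hand side of \eqref{lem1:eqn:BB} directly, exploiting the fact that the heat semigroups $P^g_s$ and $P^h_s$ commute with $D_g^2$ resp.\ $D_h^2$ on the relevant domains, and then to massage the resulting expression into the six-term form of $\mathscr{T}_{g,h,s}$ by repeatedly inserting the identity operator in the guise $I_{g,h}^{-1}I_{g,h}$ (and its tilde-analogue on $T^*M\otimes\Sigma M$) and using the decomposition $D_{h,g}=I_{g,h}^{-1}D_hI_{g,h}=L_{h,g}\widetilde\nabla^g+M_{h,g}$ from \eqref{eqn:definition-D_h,g}. First I would write $\langle D_h^2\psi,P^h_sI_{g,h}P^g_s\varphi\rangle - \langle \psi,P^h_sI_{g,h}P^g_sD_g^2\varphi\rangle$ and move the outer heat operators onto the spinors $\psi,\varphi$; since $P^h_s$ is self-adjoint and commutes with $D_h^2$, this turns the first term into $\langle \psi, P^h_s D_h^2 I_{g,h}P^g_s\varphi\rangle$ (now with $D_h^2$ acting between the two identification operators), and analogously for the second. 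Thus everything reduces to analysing the ``commutator'' $P^h_s\big(D_h^2 I_{g,h}-I_{g,h}D_g^2\big)P^g_s$, or equivalently $P^h_s I_{g,h}\big(D_{h,g}^2-D_g^2\big)P^g_s$, on the dense domain.

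The heart of the matter is then an algebraic identity for the \emph{difference of the second-order operators} $D_{h,g}^2-D_g^2$, written in a ``sandwiched'' form $\sum (\text{first-order op})^*\circ(\text{bounded bundle map})\circ(\text{first-order op})$ plus a genuinely zeroth-order remainder. Concretely, using $D_{h,g}=L_{h,g}\widetilde\nabla^g+M_{h,g}$ and $D_g=L_g\widetilde\nabla^g$, one expands
$D_{h,g}^2-D_g^2 = (L_{h,g}\widetilde\nabla^g+M_{h,g})^*(L_{h,g}\widetilde\nabla^g+M_{h,g}) - (L_g\widetilde\nabla^g)^*(L_g\widetilde\nabla^g)$
after trading $D_j^2$ for $D_j^*D_j$ modulo the $L^2$-adjointness of $D_j$ (here the integration-by-parts constants from $\varrho_{g,h}$ are exactly what produce the scalar functions $S_{g,h}$ and the operator $U_{g,h}$, hence the last term $-(S_{g,h;h}P^h_s)^*U_{g,h}S_{g,h;g}P^g_{s/2}D_g^2P^g_{s/2}$; splitting $P^g_s=P^g_{s/2}P^g_{s/2}$ lets the central $D_g^2$ be absorbed there). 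The cross terms of the expansion split according to whether they involve two covariant derivatives, one covariant derivative and one $M$, or two $M$'s. Lemma~\ref{lem:fibrewise_adjoint_L} and Lemma~\ref{lem:K_j-sa-and-commutation-formula} are tailor-made to rewrite the $\widetilde\nabla^g$--$\widetilde\nabla^g$ cross term: $L_h^*\beta^g_hL_{h,g}-L_g^*L_g$ collapses, via \eqref{eqn:fibrewise-L-difference}, into something of the form $K_h\widetilde{\mathscr A'}^{-1/2}-\widetilde{\mathscr A'}^{1/2}K_h$ conjugated by the identification operators, which is precisely $\widehat S_h$ after multiplying by the Radon--Nikodym factors; taking the polar-type decomposition $\widehat S_h=\mathrm{abs}(\widehat S_h)^{1/2}\,\sgn(\widehat S_h)\,\mathrm{abs}(\widehat S_h)^{1/2}$ and inserting $\widetilde\nabla^j P^j_s$ on each side gives the leading term $(\widehat S_{g,h;h}\widetilde\nabla^hP^h_s)^*\widehat U_{g,h}\widehat S_{g,h;g}\widetilde\nabla^gP^g_s$. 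The $M$-terms (which involve $M_{h,g}$, its adjoint, and hence $M^+_{h,g},M^-_{h,g}$) are handled the same way: each is of the form (operator of order $\le 1$)$^*\circ$(multiplication by $M^{\pm}$)$\circ$(operator of order $\le 1$), and writing $M^{\pm}=\mathrm{abs}(M^{\pm})^{1/2}\sgn(M^{\pm})\mathrm{abs}(M^{\pm})^{1/2}$ and distributing the $\mathrm{abs}^{1/2}$ halves, together with the semigroups, onto the two sides yields exactly the four middle terms with the $T^{\pm}$, $V^{\pm}$, $\widehat V^{\pm}$ operators — the $\widehat V^{\pm}$ (with $\overline{\sgn}$ and the $\varrho_{g,h}^{-1}$) coming from those cross terms in which $M_{h,g}$ sits on the ``$D_g$ side'' and therefore gets hit by an adjoint that brings in the Radon--Nikodym density.

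The main obstacle, and the part that requires genuine care rather than bookkeeping, is the \emph{domain/self-adjointness issue}: one wants to integrate by parts freely (to move $\widetilde\nabla^g$ across, to use $\langle D_j\alpha,\beta\rangle=\langle\alpha,D_j\beta\rangle$, etc.), but a priori $D_{h,g}$ is only defined on smooth spinors, and $D_{h,g}^2-D_g^2$ is a first-order operator whose action on $\Dom(D_g^2)\supsetneq C^\infty_c$ must be justified; this is precisely why the statement is phrased as a weak identity tested against $\psi\in\Dom(D_h^2)$ and $\varphi\in\Dom(D_g^2)$ rather than as an operator identity. The plan is to first prove \eqref{lem1:eqn:BB} for $\varphi,\psi\in\Gamma_{C^\infty_c}(M,\Sigma M)$, where all manipulations (Leibniz rule, fibrewise adjoints from Lemma~\ref{lem:fibrewise_adjoint_L}, the pointwise identities \eqref{eqn:fibrewise-L-difference} and \eqref{eqn:widehat_S-A-commutator}, and integration by parts with the $\varrho_{g,h}$ correction from \eqref{eqn:I-inverse-adjoint}) are unambiguous, and then extend by density: every operator appearing in $\mathscr{T}_{g,h,s}$ is bounded (as recorded just before the theorem, using $g\sim h$ and boundedness of $\omega_{g,h}$, via Corollary~\ref{cor:M-bounds} and Lemma~\ref{lem:tilde-T-bounds}), each of $\widetilde\nabla^jP^j_s$, $D_jP^j_s$, $P^j_{s/2}D_j^2P^g_{s/2}$ is bounded on $L^2$ by parabolic regularity, and the maps $\varphi\mapsto D_g^2\varphi$, $\psi\mapsto D_h^2\psi$ are closed, so both sides of \eqref{lem1:eqn:BB} depend continuously on $(\varphi,\psi)$ in the graph norms of $D_g^2$ and $D_h^2$, in which $\Gamma_{C^\infty_c}$ is a core. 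A secondary subtlety is keeping track of which identification operator ($I_{g,h}$, $I_{h,g}$, $\widetilde I_{g,h}$, $\beta^g_h$, $\beta^h_g$) and which power of $\varrho_{g,h}$ appears at each insertion point; the relations \eqref{eqn:I-inverse-adjoint}, \eqref{eqn:varrho-A}, \eqref{eqn:spectral-calculus-of-A} and the commutation relations \eqref{eqn:K_gK_h-commutation-relation}, \eqref{eqn:widehat_S-A-commutator} are exactly the toolkit needed to verify that the bundle maps $\widehat U_{g,h}$, $U_{g,h}$, $V^{\pm}_{g,h}$, $\widehat V^{\pm}_{g,h}$ as defined are the correct ones, and I would organize the computation so that each of the six terms is matched to one summand of the expansion of $\langle\psi,P^h_sI_{g,h}(D_{h,g}^2-D_g^2)P^g_s\varphi\rangle$ in turn.
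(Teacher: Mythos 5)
Your high-level plan has the right ingredients (first-order decompositions of the Dirac operators, the fibrewise adjoint lemmas, polar decompositions of $M^\pm$ and $\widehat S_j$, density extension from $\Gamma_{C^\infty_c}$), but the central step — reducing to $\langle\psi, P^h_s I_{g,h}(D_{h,g}^2-D_g^2)P^g_s\varphi\rangle$ and then expanding $D_{h,g}^2-D_g^2$ ``as a difference of squares'' $D_{h,g}^*D_{h,g}-D_g^*D_g$ modulo $\varrho_{g,h}$-corrections — is where the gap lies. Any expansion of $(L_{h,g}\widetilde\nabla^g+M_{h,g})$ against itself, whether as $D_{h,g}\circ D_{h,g}$ or as $D_{h,g}^*D_{h,g}$, necessarily produces a pure zeroth-order $M_{h,g}^*M_{h,g}$ (or $M_{h,g}^2$) cross term. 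That term has no counterpart among the six summands of $\mathscr T_{g,h,s}$: the four $T^\pm/V^\pm$ terms each contain exactly one $M$-factor paired with one $D_jP^j_s$, and the $S$-term originates elsewhere. You would therefore have to identify a cancellation between the spurious $M^*M$ contribution and the correction $D_{h,g}^2-D_{h,g}^*D_{h,g}$ (which, since $D_{h,g}^*=\varrho_{g,h}D_{h,g}\varrho_{g,h}^{-1}$, is a first-order operator, not a zeroth-order one), and your sketch does not supply this; it is not a bookkeeping detail but a structural obstruction to the route you propose.

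The paper avoids the problem entirely by a small but decisive bilinear-form manoeuvre that your proposal misses. Rather than form the operator difference, it adds and subtracts $\langle I_{g,h}^{-1}P^h_s\psi,\,D_g^2P^g_s\varphi\rangle$ to the right-hand side of \eqref{lem1:eqn:BB} and then moves exactly one $D_h$ (resp.\ one $D_g$) across the inner product by self-adjointness, arriving at
\begin{align*}
\langle D_hP^h_s\psi,\,D_hI_{g,h}P^g_s\varphi\rangle
-\langle D_gI_{g,h}^{-1}P^h_s\psi,\,D_gP^g_s\varphi\rangle
-\langle P^h_s\psi,\,(I_{g,h}-(I_{g,h}^{-1})^*)D_g^2P^g_s\varphi\rangle\,.
\end{align*}
This expansion is asymmetric: in the first pairing one factor is the plain $D_h=L_h\widetilde\nabla^h$ while the other is $D_hI_{g,h}=I_{g,h}D_{h,g}=I_{g,h}(L_{h,g}\widetilde\nabla^g+M_{h,g})$; in the second, one factor is the plain $D_g=L_g\widetilde\nabla^g$ while the other, after inserting $I_{g,h}I_{g,h}^{-1}$, becomes $D_{g,h}=L_{g,h}\widetilde\nabla^h+M_{g,h}$. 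Hence at most one $M$ can ever appear per summand, giving precisely a $\widetilde\nabla$--$\widetilde\nabla$ term (handled via \eqref{eqn:fibrewise-L-difference} and \eqref{eqn:widehat_S-A-commutator}), two one-$M$ terms, and a zeroth-order term with $(I_{g,h}-(I_{g,h}^{-1})^*)=(1-\varrho_{g,h}^{-1})\beta^g_h$, which is where $S_{g,h}$ and $U_{g,h}$ actually come from (not from an adjointness correction to $D_{h,g}^2$). Your density argument, the use of Lemmas~\ref{lem:fibrewise_adjoint_L} and \ref{lem:K_j-sa-and-commutation-formula}, and the polar decompositions of $\widehat S$ and $M^\pm$ are all as in the paper; what is missing is this ``add a zero, then move one $D$ to each side'' step, which is the mechanism that keeps the $M$-terms linear and produces the $S$-term cleanly.
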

\begin{proof} Since $D^2_g$ and $D^2_h$ are essentially self-adjoint (so that smooth compactly supported spinors are dense with respect to the corresponding graph norms), we can assume that $\varphi$ and $\psi$ are smooth and compactly supported. We add 
$$
0 = \langle I_{g,h}^{-1}P^h_s\psi,D_g^2P^g_s\varphi \rangle-\langle I_{g,h}^{-1}P^h_s\psi,D_g^2P^g_s\varphi \rangle
$$
 to the right hand side of \eqref{lem1:eqn:BB} and obtain
	\begin{align}
		&\langle D_h^2\psi,P^h_sI_{g,h}P^g_s\varphi\rangle - \langle \psi,P^h_sI_{g,h}P^g_sD_g^2\varphi\rangle\nonumber\\
		=&\langle D_h^2\psi,P^h_sI_{g,h}P^g_s\varphi\rangle - \langle I_{g,h}^{-1} P^h_s\psi,D_g^2P^g_s\varphi\rangle - \langle P^h_s\psi,I_{g,h}P^g_sD_g^2\varphi\rangle + \langle I_{g,h}^{-1} P^h_s\psi,D_g^2P^g_s\varphi\rangle\nonumber\\
		=&\langle D_hP^h_s\psi,D_hI_{g,h}P^g_s\varphi\rangle - \langle D_gI_{g,h}^{-1} P^h_s\psi,D_gP^g_s\varphi\rangle - \langle P^h_s\psi,(I_{g,h}-(I_{g,h}^{-1})^*)D_g^2P^g_s\varphi\rangle\label{eqn:dirac-hpw-formula-01}
	\end{align}	
	
	We transform the last term in \eqref{eqn:dirac-hpw-formula-01} as follows
	\begin{align*}
		&\langle P^h_s\psi,(I_{g,h}-(I_{g,h}^{-1})^*)D_g^2P^g_s\varphi\rangle = \int_{M}\!\left( P^h_s\psi,(\beta^g_h-\varrho_{g,h}^{-1}\beta^g_h)D_g^2 P^g_s\varphi \right) \dd\mu_h\\
		&=\int_{M}\!\left( P^h_s\psi,(1-\varrho_{g,h}^{-1})\beta^g_hD_g^2 P^g_s\varphi \right) \dd\mu_h=\int_{M}\!\left( P^h_s\psi,S_{g,h}\varrho_{g,h}^{-1/2}\beta^g_hD_g^2 P^g_s\varphi \right) \dd\mu_h\\
		&=\int_{M}\!\left( P^h_s\psi,\mabs(S_{g,h})^{1/2}\sgn(S_{g,h})\varrho_{g,h}^{-1/2}\mabs(S_{g,h})^{1/2} \beta^g_hD_g^2 P^g_s\varphi \right) \dd\mu_h\\
		&=\int_{M}\!\left( P^h_s\psi,\mabs(S_{g,h})^{1/2}\sgn(S_{g,h})\varrho_{g,h}^{-1/2} \beta^g_h\mabs(S_{g,h})^{1/2}D_g^2 P^g_s\varphi \right) \dd\mu_h\\
		&=\langle \psi,P^h_sS_{g,h;h}U_{g,h}S_{g,h;g}P^g_{s/2}D_g^2 P^g_{s/2}\varphi \rangle\,.
	\end{align*}
	
	Let us come back to the first two terms in \eqref{eqn:dirac-hpw-formula-01},
	\begin{align}\label{eqn:dirac-hpw-formula-02}
		&\langle D_hP^h_s\psi,D_hI_{g,h}P^g_s\varphi\rangle - \langle D_gI_{g,h}^{-1} P^h_s\psi,D_gP^g_s\varphi\rangle\nonumber\\
		=&\langle D_hP^h_s\psi,D_hI_{g,h}P^g_s\varphi\rangle - \langle I_{g,h} D_gI_{g,h}^{-1} P^h_s\psi,I_{h,g}^*D_gP^g_s\varphi\rangle\nonumber\\
		=&\langle D_hP^h_s\psi,D_hI_{g,h}P^g_s\varphi\rangle - \langle D_{g,h} P^h_s\psi,I_{h,g}^*D_gP^g_s\varphi\rangle\nonumber\\
		=&\langle L_h\tilde{\nabla}^hP^h_s\psi,D_hI_{g,h}P^g_s\varphi\rangle - \langle (L_{g,h}\widetilde{\nabla}^h+M_{g,h}) P^h_s\psi,I_{h,g}^*D_gP^g_s\varphi\rangle\nonumber\\
		=&\langle \widetilde{\nabla}^hP^h_s\psi,(L_h^*D_hI_{g,h}-L_{g,h}^*I_{h,g}^*D_g)P^g_s\varphi\rangle - \langle M_{g,h} P^h_s\psi,I_{h,g}^*D_gP^g_s\varphi\rangle\nonumber\\
		=&\langle \widetilde{\nabla}^hP^h_s\psi,(L_h^*I_{g,h}D_{h,g}-L_{g,h}^*I_{h,g}^*D_g)P^g_s\varphi\rangle - \langle \psi,(M_{g,h} P^h_s)^*I_{h,g}^*D_gP^g_s\varphi\rangle\nonumber\\
		=&\langle \widetilde{\nabla}^hP^h_s\psi,(L_h^*I_{g,h}\left(L_{h,g}\widetilde{\nabla}^g+M_{h,g}\right)-L_{g,h}^*I_{h,g}^*(L_g\widetilde{\nabla}^g))P^g_s\varphi\rangle - \langle \psi,(M_{g,h} P^h_s)^*I_{h,g}^*D_gP^g_s\varphi\rangle\nonumber\\
		=&\langle \widetilde{\nabla}^hP^h_s\psi,(L_h^*I_{g,h}L_{h,g}-L_{g,h}^*I_{h,g}^*L_g)\widetilde{\nabla}^gP^g_s\varphi\rangle +\langle \widetilde{\nabla}^hP^h_s\psi,L_h^*I_{g,h} M_{h,g}P^g_s\varphi \rangle  \nonumber\\
		-& \langle \psi,(M_{g,h} P^h_s)^*I_{h,g}^*D_gP^g_s\varphi\rangle\nonumber\\
		=&\langle \widetilde{\nabla}^hP^h_s\psi,(L_h^*I_{g,h}L_{h,g}-L_{g,h}^*I_{h,g}^*L_g)\widetilde{\nabla}^gP^g_s\varphi\rangle +\langle \psi,(D_hP^h_s)^*I_{g,h} M_{h,g}P^g_s\varphi \rangle  \\
		-& \langle \psi,(M_{g,h} P^h_s)^*I_{h,g}^*D_gP^g_s\varphi\rangle\,.\nonumber
	\end{align}

	We rewrite the first term in \eqref{eqn:dirac-hpw-formula-02} using Lemma~\ref{lem:K_j-sa-and-commutation-formula} and the relation \eqref{eqn:widehat_S-A-commutator},
	\begin{align*}
		&\langle \widetilde{\nabla}^hP^h_s\psi,(L_h^*I_{g,h}L_{h,g}-L_{g,h}^*I_{h,g}^*L_g)\widetilde{\nabla}^gP^g_s\varphi\rangle=\int_M\! \left(\widetilde{\nabla}^hP^h_s\psi, (L_h^*\beta^g_hL_{h,g}-\varrho_{g,h}^{-1}L_{g,h}^*\beta^g_hL_g)\widetilde{\nabla}^gP^g_s\varphi \right)\dd\mu_h\\
		=& \int_M\! \left(\widetilde{\nabla}^hP^h_s\psi, \varrho_{g,h}^{-1/2}(\mathscr A'^{1/2}\otimes\beta^g_h)(\varrho_{g,h}^{1/2}K_g\widetilde{\mathscr A'}^{-1/2}-\varrho_{g,h}^{-1/2}\widetilde{\mathscr A'}^{1/2}K_g)\widetilde{\nabla}^gP^g_s\varphi \right)\dd\mu_h\\
		=& \int_M\! \left(\widetilde{\nabla}^hP^h_s\psi, \varrho_{g,h}^{-1/2}(\mathscr A'^{1/2}\otimes\beta^g_h)\widehat{S}_g\widetilde{\nabla}^gP^g_s\varphi \right)\dd\mu_h\\
		=& \int_M\! \left(\widetilde{\nabla}^hP^h_s\psi, \varrho_{g,h}^{-1/2}(\mathscr A'^{1/2}\otimes\beta^g_h)\mabs(\widehat{S}_g)^{1/2}\sgn(\widehat{S}_g)\mabs(\widehat{S}_g)^{1/2}\widetilde{\nabla}^gP^g_s\varphi \right)\dd\mu_h\\
		=& \int_M\! \left(\widetilde{\nabla}^hP^h_s\psi, \mabs(\widehat{S}_h)^{1/2}\varrho_{g,h}^{-1/2}(\mathscr A'^{1/2}\otimes\beta^g_h)\sgn(\widehat{S}_g)\mabs(\widehat{S}_g)^{1/2}\widetilde{\nabla}^gP^g_s\varphi \right)\dd\mu_h\\
		=& \langle \tilde{\nabla}^hP^h_s\psi,\widehat{S}_{g,h;h}^*\widehat{U}_{g,h}\widehat{S}_{g,h;g}\tilde{\nabla}^gP^g_s\varphi\rangle\,.
	\end{align*}
	
	At last, we rewrite the second term in \eqref{eqn:dirac-hpw-formula-02} (the third is handled analogously),
	\begin{align*}
		&\langle \psi,(D_hP^h_s)^*I_{g,h} M_{h,g}P^g_s\varphi \rangle = \int_M\!\left(\psi,(D_hP^h_s)^*\beta^g_h (M^+_{h,g}+M^-_{h,g})P^g_s\varphi \right)\dd \mu_h\\
		&= \int_M\!\left(\psi,(D_hP^h_s)^*\beta^g_h \mabs(M^+_{h,g})^{1/2}\beta^h_g\beta^g_h\sgn(M^+_{h,g})\mabs(M^+_{h,g})^{1/2}P^g_s\varphi \right)\dd \mu_h\\
		&+ \int_M\!\left(\psi,(D_hP^h_s)^*\beta^g_h \mabs(M^-_{h,g})^{1/2}\beta^h_g\beta^g_h\sgn(M^-_{h,g})\mabs(M^-_{h,g})^{1/2}P^g_s\varphi \right)\dd \mu_h\\
		&=\langle\psi, (D_hP^h_s)^*(T^+_{g;h})^*V^+_{g,h}T^+_gP^g_s\varphi \rangle + \langle\psi, (D_hP^h_s)^*(T^-_{g;h})^*V^-_{g,h}T^-_gP^g_s\varphi \rangle\,.
	\end{align*}
\end{proof}

\begin{theorem}[Dirac-HPW-formula II]\label{thm:dirac-hpw-formula2}
	Let $g\sim h$ be geodesically complete Riemannian metrics on $M$ such that the function $\omega_{g,h}$ is bounded. Given $s>0$ define the bounded operator 
	$$
	\mathscr{R}_{g,h,s}:\Gamma_{L^2}(M,\Sigma_gM)\longrightarrow  \Gamma_{L^2}(M,\Sigma_hM)
	$$
	by
	\begin{multline*}
		\mathscr{R}_{g,h,s} := \left(Q_hP^h_s\right)^*\widehat{W}_{g,h} Q_g\widetilde{\nabla}^gP^g_s + \left(R_hP^h_s\right)^*\widehat{W}_{g,h}R_gP^g_s - \left(T_h^+P^h_s\right)^*W^+_{g,h}T^+_{h;g}P^g_s - \left(T_h^-P^h_s\right)^*W^-_{g,h}T^-_{h;g}P^g_s
	\end{multline*}
	Then the following formula holds for all $s>0$, $\varphi\in\Dom(D_g)$ and $\psi\in\Dom(D_h)$,
	\begin{align}\label{lem1:eqn:BB2}
		\langle \psi,\mathscr{R}_{g,h,s}\varphi\rangle= \langle D_h\psi,P^h_sI_{g,h}P^g_s\varphi\rangle - \langle \psi,P^h_sI_{g,h}P^g_sD_g\varphi\rangle\,.
	\end{align}
\end{theorem}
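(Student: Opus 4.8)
The plan is to follow the proof of Theorem~\ref{thm:dirac-hpw-formula}; here the argument is actually shorter, because the operators on the two sides are $D_h$ and $D_g$ rather than their squares, so there is no need to move one ``$D$'' to each side of the heat kernels and, correspondingly, no identification-operator asymmetry term $(I_{g,h}-(I_{g,h}^{-1})^{*})$ — hence no $S_{g,h}$-term — appears in $\mathscr{R}_{g,h,s}$. First I would reduce to $\varphi,\psi$ smooth and compactly supported, by essential self-adjointness of $D_g$ and $D_h$ on such spinors and boundedness of $\mathscr{R}_{g,h,s}$ (its constituent operators are bounded by $g\sim h$ and boundedness of $\omega_{g,h}$, cf.\ the discussion above); a density argument in the graph norms of $D_g,D_h$ then recovers the general case. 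Since $P^h_sI_{g,h}P^g_s\varphi\in\Dom(D_h)$ by heat smoothing on the complete manifold $M$, $D_h$ commutes with $P^h_s$, $D_g$ with $P^g_s$, and the heat operators are self-adjoint, the right-hand side of \eqref{lem1:eqn:BB2} becomes
\[
	\langle D_h\psi,P^h_sI_{g,h}P^g_s\varphi\rangle-\langle\psi,P^h_sI_{g,h}P^g_sD_g\varphi\rangle=\langle P^h_s\psi,\,(D_hI_{g,h}-I_{g,h}D_g)\,P^g_s\varphi\rangle,
\]
so the theorem reduces to a pointwise algebraic identity for the operator $D_hI_{g,h}-I_{g,h}D_g$ on smooth spinors.

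To compute this operator I would write, using $D_h=L_h\widetilde{\nabla}^h$, the defining property of $D_{g,h}$, and $D_{g,h}=L_{g,h}\widetilde{\nabla}^h+M_{g,h}$, that $D_hI_{g,h}=L_h\widetilde{\nabla}^hI_{g,h}$ and $I_{g,h}D_g=D_{g,h}I_{g,h}=(L_{g,h}\widetilde{\nabla}^h+M_{g,h})I_{g,h}$, so that $D_hI_{g,h}-I_{g,h}D_g=(L_h-L_{g,h})\widetilde{\nabla}^hI_{g,h}-M_{g,h}I_{g,h}$. Substituting $\widetilde{\nabla}^hI_{g,h}=(\Id\otimes\beta^g_h)\,{}^g\widetilde{\nabla}^h$ (immediate from ${}^g\widetilde{\nabla}^h=\beta^h_g\widetilde{\nabla}^h\beta^g_h$ and $\beta^g_h\beta^h_g=\Id$), then ${}^g\widetilde{\nabla}^h=\widetilde{\nabla}^g+\tfrac14\widetilde{T}_{h,g}\underset{g}{\cdot}=\widetilde{\nabla}^g+R_g$ by Remark~\ref{rem:skewed-connection}(ii), together with the identity $(L_h-L_{g,h})(\Id\otimes\beta^g_h)=L_h\big((\Id-\mathscr{A}'^{1/2})\otimes\beta^g_h\big)=L_h\widetilde{S}_h\widetilde{I}_{g,h}$ (a direct consequence of the definitions of $L_h,L_{g,h},\widetilde{S}_h,\widetilde{I}_{g,h}$ and the musical relations \eqref{eqn:spectral-calculus-of-A}), I obtain the decomposition
\[
	D_hI_{g,h}-I_{g,h}D_g=L_h\widetilde{S}_h\widetilde{I}_{g,h}\widetilde{\nabla}^g+L_h\widetilde{S}_h\widetilde{I}_{g,h}R_g-M_{g,h}I_{g,h}.
\]

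It then remains to pair this with $\langle P^h_s\psi,(\cdot)P^g_s\varphi\rangle$ and match the three summands. For the first two I move $L_h$ onto the $\psi$-slot as the fibrewise adjoint $L_h^{*}$ and read off the result from the definitions of $Q_g,Q_h,R_g,R_h,\widehat{W}_{g,h}$, using the symmetrized polar decomposition $\widetilde{S}_h=\mabs(\widetilde{S}_h)^{1/2}\sgn(\widetilde{S}_h)\mabs(\widetilde{S}_h)^{1/2}$ (legitimate because $\widetilde{S}_h$ is fibrewise self-adjoint, $\mathscr{A}'$ being $h^{*}$-self-adjoint and positive), the similarity $\widetilde{I}_{g,h}\mabs(\widetilde{S}_g)=\mabs(\widetilde{S}_h)\widetilde{I}_{g,h}$ (again from \eqref{eqn:spectral-calculus-of-A}), and $\mabs(\widetilde{S}_h)\sgn(\widetilde{S}_h)=\widetilde{S}_h$; this yields $\langle\psi,(Q_hP^h_s)^{*}\widehat{W}_{g,h}Q_g\widetilde{\nabla}^gP^g_s\varphi\rangle+\langle\psi,(R_hP^h_s)^{*}\widehat{W}_{g,h}R_gP^g_s\varphi\rangle$. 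For the third summand I expand $M_{g,h}=M^{+}_{g,h}+M^{-}_{g,h}$ and, using that each $\mabs(M^i_{g,h})$ is a nonnegative self-adjoint multiplication operator, that $\mabs(M^i_{g,h})^{1/2}\sgn(M^i_{g,h})\mabs(M^i_{g,h})^{1/2}=M^i_{g,h}$, and $\beta^g_h\beta^h_g=\Id$, identify $\langle P^h_s\psi,M^i_{g,h}I_{g,h}P^g_s\varphi\rangle=\langle\psi,(T^i_hP^h_s)^{*}W^i_{g,h}T^i_{h;g}P^g_s\varphi\rangle$ for $i\in\{+,-\}$, which supplies the last two terms of $\mathscr{R}_{g,h,s}$ with the correct minus sign, completing the proof. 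The main obstacle is purely bookkeeping — carefully tracking $\mathscr{A}$ and its transpose $\mathscr{A}'$ through \eqref{eqn:spectral-calculus-of-A} and through the Clifford-compatibility of $\beta^g_h$, and checking that the anti-self-adjoint part $M^{-}_{g,h}$ behaves correctly under the $\mabs/\sgn$-calculus — but there is no new analytic difficulty beyond Theorem~\ref{thm:dirac-hpw-formula}; indeed the computation is strictly simpler, there being neither a second-order operator to split nor an $S_{g,h}$-term to track.
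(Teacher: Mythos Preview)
Your proof is correct and follows essentially the same route as the paper's: reduce to compactly supported smooth spinors, rewrite the right-hand side as $\langle P^h_s\psi,(D_hI_{g,h}-I_{g,h}D_g)P^g_s\varphi\rangle$, express $D_hI_{g,h}-I_{g,h}D_g=(L_h-L_{g,h})\widetilde{\nabla}^hI_{g,h}-M_{g,h}I_{g,h}$, then unwind via Remark~\ref{rem:skewed-connection}(ii) and the identity $(L_h-L_{g,h})(\Id\otimes\beta^g_h)=L_h\widetilde{S}_h\widetilde{I}_{g,h}$, and finally match each summand to the operators $Q_j,R_j,T^i_h,T^i_{h;g},\widehat{W}_{g,h},W^i_{g,h}$ by the polar-type decompositions of $\widetilde{S}_h$ and $M^\pm_{g,h}$. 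The paper carries out exactly these steps, written as explicit integrals over $M$ rather than in your more compressed operator notation.
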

\begin{proof}
	As in the proof of Theorem~\ref{thm:dirac-hpw-formula}, we assume that $\varphi$ and $\psi$ are smooth and compactly supported.
	
	We start with the right hand side of \eqref{lem1:eqn:BB2},
	\begin{align}
		&\langle D_h\psi,P^h_sI_{g,h}P^g_s\varphi\rangle - \langle \psi,P^h_sI_{g,h}P^g_sD_g\varphi\rangle = \langle \psi,(D_hP^h_s I P^g_s - P^h_s I P^g_sD_g)\varphi\rangle \nonumber\\
		=&\langle\psi,(P^h_s(D_hI-ID_g)P^g_s)\varphi\rangle = \langle\psi,P^h_s(D_h-D_{g,h})IP^g_s\varphi\rangle = \langle\psi,P^h_s((L_h-L_{g,h})\widetilde{\nabla}^h-M_{g,h})IP^g_s\varphi\rangle \nonumber\\
		=& \langle\psi,P^h_s(L_h-L_{g,h})\widetilde{\nabla}^hIP^g_s\varphi\rangle - \langle\psi, P^h_sM^+_{g,h}IP^g_s\varphi\rangle- \langle\psi, P^h_sM^-_{g,h}IP^g_s\varphi\rangle\,.\label{lem1:eqn-02}
	\end{align}
	
	By Remark~\ref{rem:skewed-connection}(ii), the first term in \eqref{lem1:eqn-02} is equal to
	\begin{align*}
		&\int_M\!(\psi, P^h_s(L_h-L_{g,h})\widetilde{\nabla}^h\beta^g_hP^g_s\varphi)\dd\mu_h = \int_M\!(\psi, P^h_s(L_h-L_{g,h})(\Id\otimes\beta^g_h)\,{}^g\widetilde{\nabla}^hP^g_s\varphi)\dd\mu_h \\
		=& \int_M\!(\psi, P^h_s(L_h-L_{g,h})(\Id\otimes\beta^g_h)\widetilde{\nabla}^gP^g_s\varphi)\dd\mu_h + \int_M\!(\psi, P^h_s(L_h-L_{g,h})(\Id\otimes\beta^g_h)\tfrac 14\widetilde{T}_{h,g}P^g_s\varphi)\dd\mu_h\\
		=& \int_M\!(\psi, P^h_sL_h(\Id-\widetilde{\mathscr A'}^{1/2})(\Id\otimes\beta^g_h)\widetilde{\nabla}^gP^g_s\varphi)\dd\mu_h + \int_M\!(\psi, P^h_sL_h(\Id-\widetilde{\mathscr A'}^{1/2})(\Id\otimes\beta^g_h)\tfrac 14\widetilde{T}_{h,g}P^g_s\varphi)\dd\mu_h\\
		=& \int_M\!(\psi, P^h_sL_h\widetilde{S}_h(\mathscr A'^{1/2}\otimes\beta^g_h)\widetilde{\nabla}^gP^g_s\varphi)\dd\mu_h + \int_M\!(\psi, P^h_sL_h\widetilde{S}_h(\mathscr A'^{1/2}\otimes\beta^g_h)\tfrac 14\widetilde{T}_{h,g}P^g_s\varphi)\dd\mu_h\\
		=& \int_M\!(\psi, P^h_sL_h\mabs(\widetilde{S}_h)^{1/2}\sgn(\widetilde{S}_h)(\mathscr A'^{1/2}\otimes\beta^g_h)\mabs(\widetilde{S}_g)^{1/2}\widetilde{\nabla}^gP^g_s\varphi)\dd\mu_h \\
		+& \int_M\!(\psi, P^h_sL_h\mabs(\widetilde{S}_h)\sgn(\widetilde{S}_h)(\mathscr A'^{1/2}\otimes\beta^g_h)\tfrac 14\widetilde{T}_{h,g}P^g_s\varphi)\dd\mu_h\\
		=&\langle\psi,\left(Q_hP^h_s\right)^*\widehat{W}_{g,h} Q_g\widetilde{\nabla}^gP^g_s\varphi \rangle + \langle\psi, \left(R_hP^h_s\right)^*\widehat{W}_{g,h}R_gP^g_s\varphi\rangle\,.
	\end{align*}
	
	With $i\in\{+,-\}$, the second and third term in \eqref{lem1:eqn-02} are handled as follows,
	\begin{multline*}
		\langle\psi, P^h_sM^i_{g,h}IP^g_s\varphi\rangle = \int_M\!(\psi, P^h_sM^i_{g,h}\beta^g_hP^g_s\varphi)\dd\mu_h \\
		= \int_M\!(\psi, P^h_s\mabs(M^i_{g,h})^{1/2}\sgn(M^i_{g,h})\beta^g_h\beta^h_g\mabs(M^i_{g,h})^{1/2}\beta^g_hP^g_s\varphi)\dd\mu_h = \langle \psi,\left(T_h^iP^h_s\right)^*W^i_{g,h}T^i_{h;g}P^g_s\varphi \rangle\,.
	\end{multline*}

\end{proof}

We close this section with operator estimates that we will need in the next section.

\begin{lemma}\label{lem:S-Shat-pointwise-estimates}
	For $j\in\{g,h\}$ we have the pointwise estimates
	\begin{align*}
	|S|\leq \delta_{g,h}\,, \qquad |\widetilde{S}_j|\leq \tilde{C}_1\cdot \delta_{g,h}\,,   \quad \text{ and }\quad  |\hat{S}_j|\leq \tilde{C}_2\cdot \delta_{g,h}\,,
	\end{align*}
	where the constants $\tilde{C}_1,\tilde{C}_2$ only depend on the dimension $n$ and the constant in \eqref{eqn:quasi-isometric}.
\end{lemma}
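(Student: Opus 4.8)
The plan is to express all three objects through the spectral data of the fiberwise positive endomorphism $\mathscr A=\mathscr A^g_h$ and to reduce the estimates to elementary scalar inequalities. First I would fix $x\in M$, let $\lambda_1,\dots,\lambda_n$ be the eigenvalues of $\mathscr A(x)$ (which lie in $[1/C,C]$ by \eqref{eqn:quasi-isometric}), and set $m:=\max_i|\ln\lambda_i|\le\ln C$. The key preliminary observation is that, since $\sinh$ is odd, increasing on $[0,\infty)$ and satisfies $\sinh t\ge t$ there, the definition of $\delta_{g,h}$ yields $\delta_{g,h}(x)=2\sinh(\tfrac n4 m)\ge\tfrac n2 m$, hence
\[
	m\le\tfrac 2n\,\delta_{g,h}(x);
\]
this is the inequality that converts logarithmic spectral deviations of $\mathscr A$ into $\delta_{g,h}$. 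I would also use that $\mathscr A'$ is self-adjoint and positive with respect to both $g^*$ and $h^*$, with the same eigenvalues $\lambda_i$, so $\mathscr A'^{\alpha}$ has eigenvalues $\lambda_i^{\alpha}$, and that $\varrho_{g,h}^{1/2}=\det(\mathscr A)^{1/4}=\exp\!\big(\tfrac14\sum_i\ln\lambda_i\big)$.

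For $S_{g,h}$ I would simply note that $S_{g,h}(x)=\varrho_{g,h}^{1/2}-\varrho_{g,h}^{-1/2}=2\sinh(u)$ with $u=\tfrac14\sum_i\ln\lambda_i$ and $|u|\le\tfrac14\sum_i|\ln\lambda_i|\le\tfrac n4 m$, so $|S_{g,h}(x)|=2\sinh|u|\le 2\sinh(\tfrac n4 m)=\delta_{g,h}(x)$ — the first estimate, with constant $1$. For $\widetilde S_j$, since its $\Sigma_jM$-factor is the identity, $|\widetilde S_j(x)|$ is the operator norm of $\mathscr A'(x)^{-1/2}-\Id$ on $(T^*_xM,j^*)$, which by self-adjointness equals $\max_i|\lambda_i^{-1/2}-1|$; applying the elementary bound $|e^t-1|\le|t|\,e^{|t|}$ with $t=-\tfrac12\ln\lambda_i$ (so $|t|\le\tfrac m2\le\tfrac12\ln C$) together with the estimate $m\le\tfrac2n\delta_{g,h}$ gives
\[
	|\lambda_i^{-1/2}-1|\le\tfrac m2\,C^{1/2}\le\tfrac{\sqrt C}{n}\,\delta_{g,h}(x),
\]
so $\tilde C_1=\sqrt C/n$ works.

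The step I expect to be the main obstacle is the bound on $\widehat S_j$, which is \emph{not} self-adjoint, so its operator norm cannot be read off from eigenvalues; moreover a direct triangle-inequality split of $\widehat S_j=\varrho_{g,h}^{1/2}K_j\widetilde{\mathscr A'}^{-1/2}-\varrho_{g,h}^{-1/2}\widetilde{\mathscr A'}^{1/2}K_j$ only produces a bound that stays bounded away from $0$ as $\mathscr A\to\Id$, which is too weak. The remedy I would use is to add and subtract $K_j$, producing the regrouping
\[
	\widehat S_j=K_j\big(\varrho_{g,h}^{1/2}\widetilde{\mathscr A'}^{-1/2}-\Id\big)-\big(\varrho_{g,h}^{-1/2}\widetilde{\mathscr A'}^{1/2}-\Id\big)K_j ,
\]
in which each second factor vanishes when $\mathscr A=\Id$. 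Then submultiplicativity and the triangle inequality give $|\widehat S_j(x)|\le|K_j|\big(|\varrho_{g,h}^{1/2}\widetilde{\mathscr A'}^{-1/2}-\Id|+|\varrho_{g,h}^{-1/2}\widetilde{\mathscr A'}^{1/2}-\Id|\big)$, where $|K_j|$ is a dimensional constant by the remark following Lemma~\ref{lem:K_j-sa-and-commutation-formula} (explicitly $K_j=L_j^*L_j$ and $L_jL_j^*=n\,\Id$, so $|K_j|=n$). Each remaining factor is self-adjoint with eigenvalues $\varrho_{g,h}^{\pm1/2}\lambda_i^{\mp1/2}-1=e^{\pm s_i}-1$, where $s_i=\tfrac14\sum_k\ln\lambda_k-\tfrac12\ln\lambda_i$ satisfies $|s_i|\le\tfrac{n+2}{4}m\le\tfrac{n+2}{4}\ln C$; applying once more $|e^{\pm s_i}-1|\le|s_i|e^{|s_i|}\le\tfrac{n+2}{4}m\,C^{(n+2)/4}$ and $m\le\tfrac2n\delta_{g,h}$ bounds each of the two factors by $\tfrac{n+2}{2n}C^{(n+2)/4}\delta_{g,h}(x)$, so that $|\widehat S_j(x)|\le(n+2)C^{(n+2)/4}\delta_{g,h}(x)$ and $\tilde C_2=(n+2)C^{(n+2)/4}$. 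All constants produced this way depend only on $n$ and $C$, as required.
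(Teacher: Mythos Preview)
Your argument is correct. The estimates for $S_{g,h}$ and $\widetilde S_j$ are handled essentially as the paper intends (the paper says these are ``apparent'' once the $\widehat S_j$ case is done), and your treatment of $\widehat S_j$ is valid but genuinely different from the paper's.

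The paper does not split $\widehat S_j$ at all. Instead it conjugates by $\widetilde{\mathscr A'}^{1/4}$ (recall $\widehat S_j$ was observed to be similar to a self-adjoint operator), computes the result explicitly on an eigenbasis of $\mathscr A$, and finds that on the $i$-th tensor slot it acts through the operator
\[
\mathscr B_i=\varrho^{-1/2}\lambda_i^{1/4}\mathscr A^{1/4}-\varrho^{1/2}\lambda_i^{-1/4}\mathscr A^{-1/4}=2\sinh\!\big(\tfrac14\ln(\varrho^{-2}\lambda_i\mathscr A)\big).
\]
Since $\tfrac14|\ln(\varrho^{-2}\lambda_i\lambda_k)|=\tfrac14\big|{-\sum_j\ln\lambda_j}+\ln\lambda_i+\ln\lambda_k\big|\le\tfrac n4 m$, monotonicity of $\sinh$ gives $|\mathscr B_i|\le\delta_{g,h}$ directly, and the remaining factors $|\mathscr A|^{1/4}|\mathscr A^{-1}|^{1/4}$ from undoing the conjugation supply the quasi-isometry constant. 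So the paper discovers the same $\sinh$ structure inside $\widehat S_j$ that defines $\delta_{g,h}$, and never needs the conversion $m\le\tfrac2n\delta_{g,h}$ via $\sinh t\ge t$.

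Your route trades that structural computation for the algebraic identity $\widehat S_j=K_j(\varrho^{1/2}\widetilde{\mathscr A'}^{-1/2}-\Id)-(\varrho^{-1/2}\widetilde{\mathscr A'}^{1/2}-\Id)K_j$ together with the scalar bound $|e^t-1|\le|t|e^{|t|}$ and the linearization $\sinh t\ge t$. This is softer and more portable (no eigenbasis computation of $\widehat S_j$, only of the diagonal factors and knowledge of $|K_j|$), at the cost of somewhat larger constants; the paper's approach yields tighter constants and makes the match with the $\sinh$ definition of $\delta_{g,h}$ transparent.
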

\begin{proof}
	We prove the estimate for $\hat{S}_j$. The ones for $S$ and $\widetilde{S}_j$ will then be apparent.
	
	Fix a point $x\in M$ and let $\xi\otimes\sigma\in (T^*M\otimes \Sigma_jM)_x$, let $(e_1,\ldots,e_n)$ be a $j$-ONB of $T_xM$ consisting of eigenvectors of $\mathscr A$ with $\mathscr Ae_i=\lambda_ie_i$, $i=1,\ldots,n$, and let $(\varepsilon_1,\ldots,\varepsilon_n)$ the $j$-dual ONB which then satisfies $\mathscr A'\varepsilon_i = \lambda_i\varepsilon_i$ for all $i=1,\ldots,n$.
	
	By definition of $\hat{S}_j$ we have
	\begin{align*}
	\widetilde{\mathscr A'}^{-1/4}&\hat{S}_j\widetilde{\mathscr A'}^{1/4}(\xi\otimes\sigma)  = \left(\varrho^{1/2}\widetilde{\mathscr A'}^{-1/4}K_j\widetilde{\mathscr A'}^{-1/4}-\varrho^{-1/2}\widetilde{\mathscr A'}^{1/4}K_j\widetilde{\mathscr A'}^{1/4}\right)(\xi\otimes\sigma)\nonumber\\
	=&\varrho^{1/2}\widetilde{\mathscr A'}^{-1/4}K_j(\mathscr A'^{-1/4}\xi\otimes\sigma)-\varrho^{-1/2}\widetilde{\mathscr A'}^{1/4}K_j(\mathscr A'^{1/4}\xi\otimes\sigma)\nonumber\\
	=&\varrho^{1/2}\widetilde{\mathscr A'}^{-1/4}\left(-\sum_{i=1}^{n}\varepsilon_i\otimes e_i\cdot(\mathscr A'^{-1/4}\xi)^{\sharp}\cdot\sigma \right)-\varrho^{-1/2}\widetilde{\mathscr A'}^{1/4}\left(-\sum_{i=1}^{n}\varepsilon_i\otimes e_i\cdot(\mathscr A'^{1/4}\xi)^{\sharp}\cdot\sigma \right)\nonumber\\
	=&\sum_{i=1}^{n}\left(\varrho^{-1/2}\mathscr A'^{1/4}\varepsilon_i\otimes e_i\cdot\mathscr A^{1/4}\xi^{\sharp}\cdot\sigma-\varrho^{1/2}\mathscr A'^{-1/4}\varepsilon_i\otimes e_i\cdot\mathscr A^{-1/4}\xi^{\sharp}\cdot\sigma\right)\nonumber\\
	=&\sum_{i=1}^{n}\varepsilon_i\otimes e_i\cdot \left(\varrho^{-1/2}\lambda_i^{1/4}\mathscr A^{1/4}-\varrho^{1/2}\lambda_i^{-1/4}\mathscr A^{-1/4} \right)\left(\xi^{\sharp}\right)\cdot\sigma\,.\nonumber%\label{eqn:lem:S-Shat-pointwise-estimates-0}
	\end{align*}
	We denote the operator in parantheses
	\begin{align*}
	\mathscr B_i=\varrho^{-1/2}\lambda_i^{1/4}\mathscr A^{1/4}-\varrho^{1/2}\lambda_i^{-1/4}\mathscr A^{-1/4} = (\varrho^{-2}\lambda_i\mathscr A)^{1/4}-(\varrho^{-2}\lambda_i\mathscr A)^{-1/4}=2\sinh\left(\tfrac 14 \ln (\varrho^{-2}\lambda_i\mathscr A)\right)
	\end{align*}
	and obtain from the above, the Clifford-relations and the fact that Clifford multiplication is skew-symmetric that
	\begin{equation}\label{eqn:lem:S-Shat-pointwise-estimates-1}
	\begin{aligned}
	|\hat{S}_j(\xi\otimes\sigma)|&=\sum_{i=1}^n|{\mathscr A'}^{1/4}\varepsilon_i|| e_i\cdot\mathscr B_i((\mathscr{A}'^{-1/4}\xi)^{\sharp})\cdot\sigma|\leq |\mathscr A|^{1/4}\sum_{i=1}^n |e_i||\mathscr B_i(\mathscr{A}^{-1/4}\xi^{\sharp})||\sigma|\\&\leq |\mathscr A|^{1/4}|\mathscr A^{-1}|^{1/4}\sum_{i=1}^n \left|\mathscr B_i\right| |\xi| |\sigma|\,.
	\end{aligned}
	\end{equation}	
	It remains to bound the norm of $\mathscr B_i$. Since $\sinh$ is odd and positive for positive arguments, we have
	\begin{align*}
	\left|\mathscr B_i\right|=\left|2\sinh\left(\tfrac 14 \ln \varrho^{-2}\lambda_i\mathscr A \right)  \right|= 2\sinh\left(\tfrac 14 \left|\ln \varrho^{-2}\lambda_i\mathscr A \right|\right)\,.	
	\end{align*}
	In light of \eqref{eqn:varrho-A} we can bound the the argument of $\sinh$ as follows,
	\begin{align*}
	\tfrac 14\left|\ln\varrho^{-2}\lambda_i\mathscr A \right|=\tfrac 14 \max_{k=1,\ldots,n} \left|-\sum_{j=1}^{n}\ln\lambda_j + \ln\lambda_i + \ln\lambda_k\right|\leq \tfrac n4 \cdot \max_{k=1,\ldots,n} |\ln\lambda_k|\,.
	\end{align*}
	From this we obtain
	\begin{align*}
	|\mathscr B_i|\leq \delta_{g,h}\,,
	\end{align*}
	which in turn yields, together with \eqref{eqn:lem:S-Shat-pointwise-estimates-1}, the claimed bound on $|\hat{S}_j|$.
\end{proof}

\begin{lemma}\label{lem:tilde-T-bounds} Viewing $\widetilde{T}_{h,g}$ resp. $\widetilde{T}_{g,h}$ as sections of the bundles $\Hom(\Sigma_gM, T^*M\otimes\Sigma_gM)$ resp.~$\Hom(\Sigma_hM, T^*M\otimes\Sigma_hM))$ through Clifford multiplication, we have the pointwise estimates
	\begin{align*}
	|\widetilde{T}_{h,g}|\leq \tilde{C}_3\cdot \omega_{g,h}\quad \text{ and } \quad |\widetilde{T}_{g,h}|\leq \tilde{C}_4\cdot  \omega_{g,h}\,,
	\end{align*}
	where the constants $\tilde{C}_3$ and $\tilde{C}_4$ only depend on the dimension $n$ and the constant in \eqref{eqn:quasi-isometric}.
\end{lemma}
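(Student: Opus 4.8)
The plan is to deduce both estimates from Proposition~\ref{prop:nabla-difference-bound} once we observe that passing from a $g$-skew-symmetric endomorphism of a tangent space to the associated element of the Clifford algebra, and letting the latter act by Clifford multiplication on spinors, costs only a constant depending on the dimension.

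First, recall that by construction $\widetilde{T}_{h,g}(X)=\pr_g(T_{h,g}(X))$, where $T_{h,g}(X)={}^g\nabla^h_X-\nabla^g_X$ is skew-symmetric with respect to $g$. Fixing $x\in M$ and a $g$-ONB $(e_1,\dots,e_n)$ of $T_xM$, set $a_{ij}(X):=g(T_{h,g}(X)e_j,e_i)$ for $i<j$. Chasing the definition \eqref{eqn:projection-from-end-to-clifford} of $\pr_g$ one finds $\widetilde{T}_{h,g}(X)=2\sum_{i<j}a_{ij}(X)\,e_i\underset{g}{\cdot}e_j\in\C\ell(T_xM,g)$, a purely degree-two element. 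Since $e_i\underset{g}{\cdot}e_j$ acts on spinors as a unitary whenever $i\neq j$, Clifford multiplication by $\widetilde{T}_{h,g}(X)$ has operator norm at most $2\sum_{i<j}|a_{ij}(X)|$, which by Cauchy--Schwarz and the comparison of the Hilbert--Schmidt and operator norms of a skew-symmetric matrix is $\leq c_n\,|T_{h,g}(X)|_g$ for a constant $c_n$ depending only on $n$.

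Next, Proposition~\ref{prop:nabla-difference-bound} together with $g\sim h$ gives $|T_{h,g}|_g\leq(|\mathscr A^{-1}|^{3/2}|\mathscr A|^{3/2}+|\mathscr A^{-1}|^{1/2}|\mathscr A|^{1/2})\,\omega_{g,h}\leq C'\,\omega_{g,h}$, where $C'$ depends only on the quasi-isometry constant in \eqref{eqn:quasi-isometric} (recall $|\mathscr A|,|\mathscr A^{-1}|\leq C$). Hence, for a unit spinor $\sigma\in(\Sigma_gM)_x$, computing the norm of $\widetilde{T}_{h,g}(x)\sigma\in(T^*M\otimes\Sigma_gM)_x$ against the $g$-ONB as $\bigl(\sum_{k=1}^n|\widetilde{T}_{h,g}(e_k)\underset{g}{\cdot}\sigma|^2\bigr)^{1/2}$ and applying the previous step to each summand yields $|\widetilde{T}_{h,g}(x)|\leq\sqrt{n}\,c_n\,C'\,\omega_{g,h}(x)$, which is the first inequality with $\tilde C_3:=\sqrt{n}\,c_n\,C'$.

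For the second inequality one repeats the argument with the roles of $g$ and $h$ exchanged: the $g\leftrightarrow h$ form of Proposition~\ref{prop:nabla-difference-bound} bounds $|T_{g,h}|_h$ in terms of $|\nabla^g-\nabla^h|_h$, and quasi-isometry gives $|\nabla^g-\nabla^h|_h\leq C''\,\omega_{g,h}$ since the $h$- and $g$-operator norms on $\Hom(TM,\End(TM))$ are comparable with a constant depending only on \eqref{eqn:quasi-isometric}; the Clifford-algebra step is then carried out with respect to $h$. I do not expect any genuine obstacle; the only points requiring care are the bookkeeping of which $j$-ONB the various inner products are taken against and the purely linear-algebraic nature of the constant $c_n$.
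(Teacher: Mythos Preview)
Your argument is correct and, for the first inequality, essentially identical to the paper's: both reduce the Clifford-action bound to a dimensional constant times $|T_{h,g}|_g$ and then invoke Proposition~\ref{prop:nabla-difference-bound}. For the second inequality there is a minor difference worth recording: you argue by symmetry, applying the $g\leftrightarrow h$ version of Proposition~\ref{prop:nabla-difference-bound} and then comparing the $h$- and $g$-operator norms on $\Hom(TM,\End(TM))$ via quasi-isometry; the paper instead establishes the algebraic identity
\[
T_{g,h}(v)= -\,\mathscr A^{-1/2}\circ T_{h,g}(v)\circ \mathscr A^{1/2},
\]
which directly yields $|T_{g,h}|_h\leq |\mathscr A^{-1}|^{3/2}|\mathscr A|\,|T_{h,g}|_g$ and hence reduces the second bound to the first. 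Both routes give constants depending only on $n$ and the quasi-isometry constant; the paper's identity is slightly more explicit about the dependence on $\mathscr A$, while your symmetry argument avoids having to discover the conjugation formula.
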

\begin{proof}
	Let $x\in M$ and $(e_1,\ldots,e_n)$ be a $g$-ONB of $T_xM$ consisting of eigenvectors of $\mathscr A$ with $g$-dual basis $(\varepsilon_1,\ldots,\varepsilon_n)$. If we write $T_{h,g}(e_i)=\sum_{k=1}^{n}a_{ijk}\varepsilon_j\otimes e_k$, it follows from the definition of $\widetilde{T}_{h,g}$ that
	\[
		\widetilde{T}_{h,g}(e_i) = \sum_{j,k=1}^n a_{ijk} e_j\cdot e_k\,,
	\]
	from which we obtain the estimate
	\begin{align*}
		|\widetilde{T}_{h,g}(\sigma)| = \left|\sum_{i,j,k=1}^{n}a_{ijk}\varepsilon_i\otimes e_j\cdot e_k\cdot \sigma\right| \leq \sum_{i,j,k=1}^{n}|a_{ijk}| |\varepsilon_i| |e_j\cdot e_k\cdot \sigma| = \sum_{i,j,k=1}^{n}|a_{ijk}| |\sigma|\leq C|T_{h,g}||\sigma|\,,
	\end{align*}
	where the last inequality follows from the fact that all norms on finite-dimensional vector spaces are equivalent. Together with Proposition \ref{prop:nabla-difference-bound}, this proves the first inequality in the statement of the lemma. To prove the second one, we note that analogously to the above calculation we obtain $|\widetilde{T}_{g,h}(\tau)|\leq C |T_{g,h}||\tau|$ and that
	\[
		T_{g,h}(v) = {^h}\nabla^g_{v} - \nabla^h_{v} = \mathscr A^{-1/2}\circ \nabla^g_{v}\circ \mathscr A^{1/2} - \nabla^h_{v} = \mathscr A^{-1/2}\circ(\nabla^g_{v}-{}^g\nabla^h_{v})\circ \mathscr A^{1/2} = -\mathscr A^{-1/2}\circ T_{h,g}(v)\circ \mathscr A^{1/2}\,,
	\]
	which implies
	\[
		|T_{g,h}(v)|_h\leq |\mathscr A^{-1}| |T_{h,g}(v)|_g |\mathscr A|\,.
	\]
	Hence, $|T_{g,h}|_h\leq |\mathscr A^{-1}|^{3/2}|\mathscr A||T_{h,g}|_g$ and the proof is finished.
\end{proof}

\begin{corollary}\label{cor:M-bounds}
	We have the pointwise estimates 
	\begin{align*}
		|M_{h,g}|\leq \tilde{C}_5\cdot \omega_{g,h}\quad \text{ and } \quad |M_{g,h}|\leq \tilde{C}_6\cdot  \omega_{g,h}\,,
	\end{align*}
in particular, for $i\in\{+,-\}$, 
\begin{align*}
&\left|\beta^g_h(\mabs(M^i_{h,g})^{1/2}\beta^h_g\right|=\left|\mabs(M^i_{h,g})^{1/2}\right|\leq \tilde{C}_7 \sqrt{\omega_{g,h}},\\
&\left|\beta^h_g(\mabs(M^i_{g,h})^{1/2}\beta^g_h\right|=\left|\mabs(M^i_{g,h})^{1/2}\right|\leq \tilde{C}_8 \sqrt{\omega_{g,h}},
\end{align*}
where the constants $\tilde{C}_5,\tilde{C}_6,\tilde{C}_7,\tilde{C}_8$ only depend on the dimension $n$ and the constant in \eqref{eqn:quasi-isometric}.
\end{corollary}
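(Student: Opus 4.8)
The plan is to deduce everything from Lemma~\ref{lem:tilde-T-bounds}, Proposition~\ref{prop:nabla-difference-bound}, the quasi-isometry bounds $1/C\le|\mathscr A|,|\mathscr A^{-1}|\le C$, and the elementary fact that Clifford multiplication by a $j$-normalized vector is a fibrewise isometry of $\Sigma_jM$ (hence has operator norm $1$), and so is any product of such.

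First I would estimate $|M_{h,g}|$. Fix $x\in M$, choose a $g$-ONB $(e_1,\dots,e_n)$ of $T_xM$ consisting of eigenvectors of $\mathscr A$, say $\mathscr Ae_i=\lambda_ie_i$, and write $\widetilde T_{h,g}(e_i)=\sum_{j,k}a_{ijk}\,e_j\underset{g}{\cdot}e_k$ exactly as in the proof of Lemma~\ref{lem:tilde-T-bounds}. Since $\mathscr A^{-1/2}e_i=\lambda_i^{-1/2}e_i$ and the sum defining $M_{h,g}$ is independent of the chosen ONB, one obtains
\[
M_{h,g}(\sigma)=\tfrac14\sum_{i,j,k}\lambda_i^{-1/2}a_{ijk}\;e_i\underset{g}{\cdot}e_j\underset{g}{\cdot}e_k\underset{g}{\cdot}\sigma .
\]
Taking fibrewise operator norms and bounding $|e_i\underset{g}{\cdot}e_j\underset{g}{\cdot}e_k|\le1$, $|\lambda_i^{-1/2}|\le|\mathscr A^{-1}|^{1/2}\le C^{1/2}$, and $\sum_{i,j,k}|a_{ijk}|\le c(n)\,|T_{h,g}|$ (equivalence of norms on a fixed finite-dimensional fibre, as in the proof of Lemma~\ref{lem:tilde-T-bounds}), I then invoke Proposition~\ref{prop:nabla-difference-bound} to bound $|T_{h,g}|\le c'(n,C)\,\omega_{g,h}$, which gives $|M_{h,g}|\le\tilde C_5\,\omega_{g,h}$. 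The estimate $|M_{g,h}|\le\tilde C_6\,\omega_{g,h}$ is obtained verbatim, working in an $h$-ONB and using the second half of Lemma~\ref{lem:tilde-T-bounds} together with the bound on $|T_{g,h}|$ derived in its proof.

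For the inequalities involving the square roots, I would first note that adjoining does not change operator norms, so $M^{\pm}_{h,g}=\tfrac12(M_{h,g}\pm M_{h,g}^*)$ obeys $|M^{\pm}_{h,g}|\le|M_{h,g}|\le\tilde C_5\,\omega_{g,h}$, and similarly $|M^{\pm}_{g,h}|\le\tilde C_6\,\omega_{g,h}$. Since $M^{+}_{h,g}$ is fibrewise self-adjoint and $M^{-}_{h,g}$ fibrewise anti-self-adjoint, both are normal; hence $\mabs(M^{\pm}_{h,g})$ is the genuine nonnegative absolute value $((M^{\pm}_{h,g})^{*}M^{\pm}_{h,g})^{1/2}$, so that $|\mabs(M^{\pm}_{h,g})|=|M^{\pm}_{h,g}|$ and, taking the nonnegative square root, $|\mabs(M^{\pm}_{h,g})^{1/2}|=|M^{\pm}_{h,g}|^{1/2}\le\tilde C_5^{1/2}\sqrt{\omega_{g,h}}$; set $\tilde C_7:=\tilde C_5^{1/2}$ and $\tilde C_8:=\tilde C_6^{1/2}$. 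Finally, the displayed equalities $|\beta^g_h\,\mabs(M^i_{h,g})^{1/2}\,\beta^h_g|=|\mabs(M^i_{h,g})^{1/2}|$ and its counterpart are immediate, since $\beta^g_h$ and $\beta^h_g$ are mutually inverse fibrewise unitaries between $\Sigma_gM$ and $\Sigma_hM$, so conjugation by them preserves the operator norm.

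There is no real obstacle: the statement is a clean corollary. The only places needing minor care are keeping track of which of $|\mathscr A|$, $|\mathscr A^{-1}|$ (and which powers) enter — all absorbed into the quasi-isometry constant — and the translation between the two ways of regarding $\widetilde T_{h,g}$ (a Clifford-algebra-valued $1$-form versus a section of $\Hom(\Sigma_jM,T^*M\otimes\Sigma_jM)$), which is exactly the bookkeeping with the coefficients $a_{ijk}$ already performed in Lemma~\ref{lem:tilde-T-bounds}.
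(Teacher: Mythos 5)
Your proof is correct and follows essentially the same route as the paper's: work in a $g$-ONB of eigenvectors of $\mathscr A$, use linearity of $\widetilde T_{h,g}$ and $|\mathscr A^{-1/2}e_i|\le|\mathscr A^{-1}|^{1/2}$, and then invoke the bound on $|\widetilde T_{h,g}|$ (equivalently, on $|T_{h,g}|$) from Lemma~\ref{lem:tilde-T-bounds}. The only cosmetic difference is that you re-expand $\widetilde T_{h,g}(e_i)$ into its coefficients $a_{ijk}$, whereas the paper applies the lemma's norm bound directly; you also spell out the ``in particular'' part, correctly noting that $M^{\pm}_{h,g}$ are normal (self-/anti-self-adjoint), so $\mabs(\cdot)$ is the ordinary polar modulus and conjugation by the fibrewise unitaries $\beta^g_h,\beta^h_g$ preserves operator norms, which the paper leaves implicit.
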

\begin{proof}	
	With $(e_1,\ldots,e_n)$ a $g$-ONB consisting of Eigenvectors of $\mathscr A$ we have by the last lemma
	\begin{align*}
		|M_{h,g}(\sigma)| &= \left|\frac{1}{4} \sum_{i=1}^{n}e_i \cdot \widetilde{T}_{h,g}(\mathscr A^{-1/2}e_i) \cdot \sigma\right|\leq \frac 14 \sum_{i=1}^n |e_i| |\widetilde{T}_{h,g}(\mathscr A^{-1/2}e_i)\cdot \sigma|
		\leq \frac 14 |\mathscr A^{-1}|^{1/2} \sum_{i=1}^n |\widetilde{T}_{h,g}(e_i)\cdot \sigma|\\&\leq \frac C4 |\mathscr A^{-1}|^{1/2} |\widetilde{T}_{h,g}| |\sigma|\,,
	\end{align*}
	and similarly for $|M_{g,h}|$.
\end{proof}

\section{Bismut Derivative Formulae}\label{bismut}

In this section, we fix a geodesically complete metric $g$ on $M$, so that the dependence of the data on $g$ can be safely ommited in the notation. For the simplicity of the presentation we are going to assume that the Riemannian manifold $M\equiv (M,g)$ is stochastically complete, which means that for the integral kernel of the unique self-adjoint realization of the Laplace-Beltrami operator $\Delta\geq 0$ one has
$$
 \int_M\mathrm{e}^{-t\Delta}(x,y)\dd\mu(y)=1,\quad\text{rather than the generally valid}\quad \int_M\mathrm{e}^{-t\Delta}(x,y)\dd\mu(y)\leq 1.
$$ 
This assumption is satisfied, for example, if the Ricci curvature of $M$ is bounded from below by constant (this criterion relies on geodesic completeness). We denote the spinor bundle as above with $\Sigma M$. \vspace{1mm}
	
We now record the Driver-Thalmaier machinery for probabilistic derivative formulae for $P_t=e^{-tD^2}$, noting that we have collected (essentially) all probabilistic definitions that are used in the sequel in the appendix of this paper. \\	
Let $(\Omega, \IFF, \IFF_*, \IP)$ be a filtered probability space which satisfies the usual conditions and which for every $x\in M$ carries an adapted Brownian motion 
$$
\mathsf{b}(x):[0,\infty)\times\Omega\longrightarrow M
$$
in $M$ starting from $x\in M$ which is generated by $\Delta$ (rather than $\Delta/2$, the latter being the more common choice in probability). In other words, $\mathsf{b}(x)$ is adapted, has continuous paths and the transition density of $\mathsf{b}(x)$ with respect to $\mu$ is given by the heat kernel $(t,y)\mapsto \mathrm{e}^{-t\Delta}(x,y)$. In particular, in view of
$$
\IP\{\mathsf{b}_t(x)\in M\}=\int_M \mathrm{e}^{-t\Delta}(x,y) d\mu(y),\quad  t>0,
$$ 
stochastic completeness just means that Brownian paths have an infinite lifetime (while on a general Riemannian manifold Brownian motion takes values in the Alexandroff compactification). We denote with the usual abuse of notation with $\transport^x$ the stochastic parallel transport along $\mathsf{b}(x)$ with respect to \emph{any} metric connection; for any $t\geq 0$ it is a pathwise unitary map from the fiber over $x$ to the fiber over $\mathsf{b}_t(x)$.

\begin{theorem}[Covariant Feynman-Kac formula] Assume $M$ is geodesically complete with $\mathrm{Ric}\geq -C'$ for some constant $C'\geq 0$\footnote{so that $M$ is stochastically complete}. Then for all $t>0$, $\psi\in \Gamma_{C^{\infty}_c}(M,\Sigma M)$, $x\in M$ one has
$$
P_t\psi(x)= \int \mathrm{e}^{-\frac{1}{4}\int^t_0\mathrm{scal}(\mathsf{b}_s(x)) \dd s}\transport^{x,-1}_t \psi(\mathsf{b}_t(x)) \dd\IP.
$$ 
\end{theorem}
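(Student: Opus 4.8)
The identity to be proved is a covariant Feynman--Kac--It\^o formula, and I would obtain it by feeding the Schr\"odinger--Lichnerowicz decomposition $D^2=\widetilde{\nabla}^*\widetilde{\nabla}+\tfrac14\,\mathrm{scal}$ (here $\widetilde{\nabla}$ is the spinorial Levi--Civita connection, and it is this connection whose stochastic parallel transport is meant by $\transport^x$) into an It\^o-calculus argument along the paths of $\mathsf{b}(x)$. It is worth isolating the two roles played by $\mathrm{Ric}\geq -C'$: it forces stochastic completeness, so $\mathsf{b}(x)$ never explodes and the right-hand side is a genuine $\IP$-integral; and, by tracing, $\mathrm{scal}\geq -nC'$, so the multiplicative weight $N_s:=\mathrm{e}^{-\frac14\int_0^s\mathrm{scal}(\mathsf{b}_r(x))\,\dd r}$ satisfies $0<N_s\leq\mathrm{e}^{nC's/4}$, whence the integrand on the right is dominated by a constant times $|\psi(\mathsf{b}_t(x))|$. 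Fix $\psi\in\Gamma_{C^{\infty}_c}(M,\Sigma M)$ and put $w(s,\cdot):=P_s\psi$; by essential self-adjointness of $D^2$ and parabolic regularity $w$ is smooth on $(0,\infty)\times M$, extends continuously to $s=0$ with $w(0,\cdot)=\psi$, and solves $\partial_s w=-D^2w$. Because $\mathrm{Ric}$ is bounded below one also has, for $\psi$ compactly supported, the a priori bound $\sup_{s\in[0,t]}\big(\|w(s,\cdot)\|_{\infty}+\|\widetilde{\nabla}w(s,\cdot)\|_{\infty}\big)<\infty$.

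The heart of the matter is to show that the continuous adapted process
\[
	Z_s:=N_s\,\transport^{x,-1}_s\,w(t-s,\mathsf{b}_s(x)),\qquad s\in[0,t],
\]
is a local martingale. I would start from the potential-free covariant It\^o formula for the Bochner Laplacian recorded in the appendix in the spirit of \cite{driver}: for smooth $\Phi$, the process $s\mapsto \transport^{x,-1}_s\Phi(\mathsf{b}_s(x))+\int_0^s\transport^{x,-1}_r(\widetilde{\nabla}^*\widetilde{\nabla}\Phi)(\mathsf{b}_r(x))\,\dd r$ is a local martingale. Applying this with the time-dependent field $\Phi_s:=w(t-s,\cdot)$ (so $\partial_s\Phi_s=-\partial_\tau w(t-s,\cdot)$) shows that the finite-variation part of $\dd\big(\transport^{x,-1}_s w(t-s,\mathsf{b}_s(x))\big)$ equals $\transport^{x,-1}_s\big(-\partial_\tau w-\widetilde{\nabla}^*\widetilde{\nabla}w\big)(t-s,\mathsf{b}_s(x))\,\dd s$, which by $-\partial_\tau w=D^2w=\widetilde{\nabla}^*\widetilde{\nabla}w+\tfrac14\mathrm{scal}\cdot w$ is $\transport^{x,-1}_s\big(\tfrac14\mathrm{scal}\cdot w\big)(t-s,\mathsf{b}_s(x))\,\dd s$. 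Since $\tfrac14\mathrm{scal}$ is scalar it commutes with parallel transport, so this is $\tfrac14\mathrm{scal}(\mathsf{b}_s(x))\,\transport^{x,-1}_s w(t-s,\mathsf{b}_s(x))\,\dd s$; combined with $\dd N_s=-\tfrac14\mathrm{scal}(\mathsf{b}_s(x))\,N_s\,\dd s$ and the It\^o product rule ($N$ having finite variation, there is no cross-variation term), the finite-variation part of $\dd Z_s$ cancels identically and $Z$ is a local martingale.

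It remains to upgrade $Z$ to a true martingale and read off the formula. Its martingale part has quadratic variation bounded by $\int_0^s N_r^2\,\big|\widetilde{\nabla}w(t-r,\mathsf{b}_r(x))\big|^2\,\dd r\leq \mathrm{e}^{nC't/2}\,t\,\sup_{[0,t]\times M}|\widetilde{\nabla}w|^2<\infty$, so $Z$ is an $L^2$-bounded martingale, and stochastic completeness guarantees it is defined on all of $[0,t]$; equating $\E[Z_0]=\E[Z_t]$ then gives $w(t,x)=\E\big[N_t\,\transport^{x,-1}_t\,\psi(\mathsf{b}_t(x))\big]$, which is the assertion. The only genuinely delicate ingredient, and the place I expect the main work, is the uniform-in-$s$ gradient bound $\sup_{s\in[0,t]}\|\widetilde{\nabla}P_s\psi\|_{\infty}<\infty$ used in this integrability step: under $\mathrm{Ric}\geq -C'$ it follows from a Bismut-type or Li--Yau-type gradient estimate for the covariant heat semigroup, or it can be avoided altogether by first running the whole argument with Brownian exit times from an exhaustion by relatively compact subsets and then letting the exhaustion grow, the lower bound on $\mathrm{scal}$ together with stochastic completeness controlling the error terms in the limit. (An essentially equivalent alternative is to deduce the formula from the Trotter--Kato product formula $P_t=\lim_n(\mathrm{e}^{-\frac tn\widetilde{\nabla}^*\widetilde{\nabla}}\,\mathrm{e}^{-\frac{t}{4n}\mathrm{scal}})^n$ together with the potential-free covariant Feynman--Kac formula.)
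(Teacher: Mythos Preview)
Your argument follows the same skeleton as the paper's: both define the process $Z_s=N_s\,\transport^{x,-1}_s P_{t-s}\psi(\mathsf{b}_s(x))$, verify via Lichnerowicz that it is a local martingale, promote it to a true martingale, and equate expectations at the endpoints. The one substantive difference is in the promotion step. You control the quadratic variation, which forces you to bound $\sup_{s\in[0,t]}\|\widetilde{\nabla}P_s\psi\|_\infty$, a spinorial gradient estimate you rightly flag as the delicate point (and which is not entirely trivial to justify for the covariant semigroup from $\mathrm{Ric}\geq -C'$ alone). The paper sidesteps this entirely: from Lichnerowicz one has the Kato--Simon inequality $|P_s(x,y)|\leq e^{sC/4}\,e^{-s\Delta}(x,y)$, hence $|P_s\psi|\leq e^{sC/4}\|\psi\|_\infty$ pointwise, and together with $N_s\leq e^{tC/4}$ this yields $\sup_{s\in[0,t]}|Z_s|\leq e^{tC/2}\|\psi\|_\infty$ deterministically, so uniform integrability is immediate. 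This is both shorter and uses strictly less input than your route; your exhaustion alternative would also work but is unnecessary once the Kato--Simon bound is in hand.
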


\begin{proof} That both sides agree $\mu$-almost everywhere is a well-known fact. To see that the RHS actually is the smooth representative of the semigroup generated by (the unique self-adjoint realization of) $D^2$, one can use the classical probabilistic argument from the compact case under the given assumptions: pick $C>0$ with $\mathrm{scal}\geq -C$. The process 
\begin{align*}
&N:[0,t]\times \Omega \longrightarrow (\Sigma M)_x,\\
&N_r:=\mathrm{e}^{-\frac{1}{4}\int^{r}_0\mathrm{scal}(\mathsf{b}_s(x)) \dd s}\transport^{x,-1}_r P_{t-r} \psi(\mathsf{b}_r(x))
\end{align*}
is a local martingale (Proposition 3.2 in \cite{driver}), where we remark that by local parabolic regularity and the smoothness of $\psi$ the time-dependent section $(s,y)\mapsto P_s\psi(y)$ is actually smooth on $[0,\infty)\times M$, that is, up to $s=0$. Using the Lichnerowicz formula
\begin{align}\label{eqn:Lichnerowicz}
	D^2=\widetilde{\nabla}^*\widetilde{\nabla}+\frac{1}{4}\mathrm{scal}
\end{align}one finds the following Kato-Simon inequality \cite{batu2}
\begin{align}\label{ks}
|P_s(x,y)|\leq  \mathrm{e}^{-\frac{sC}{4}} \mathrm{e}^{-s\Delta}(x,y)\quad\text{ for all $s>0$, $x,y\in M$,}
\end{align}
where 
$$
M\times M\ni (x,y)\longmapsto P_s(x,y)\in \mathrm{Hom}\big((\Sigma M)_y,(\Sigma M)_x\big)\subset \Sigma M\boxtimes (\Sigma M)^*
$$
denotes the integral kernel of $P_s$, so that
\begin{align*}
|P_s\psi(y)|\leq \mathrm{e}^{-s(\Delta+C/4)}|\psi|(y)\quad\text{ for all $s>0$, $y\in M$.}
\end{align*}
It follows that
\begin{align*}
|P_s\psi(y)|\leq \mathrm{e}^{\frac{sC}{4}}\mathrm{e}^{-s\Delta}|\psi|(y)=\mathrm{e}^{\frac{sC}{4}}\int_M\mathrm{e}^{-s\Delta}(y,y')|\psi|(y')\dd \mu(y')\leq \mathrm{e}^{\frac{sC}{4}}\left\|\Psi\right\|_{\infty}\int_M\mathrm{e}^{-s\Delta}(y,y')\dd\mu(y')\leq \mathrm{e}^{\frac{sC}{4}}\left\|\Psi\right\|_{\infty}.
\end{align*}
Thus, as $\transport_r$ is pathwise unitary, for all $r\in [0,t]$ we have
\begin{align*}
\mathrm{e}^{-\frac{1}{4}\int^{r}_0\mathrm{scal}(\mathsf{b}_s(x)) \dd s}\transport^{x,-1}_r P_{t-r} \psi(\mathsf{b}_r(x))\leq \mathrm{e}^{\frac{tC}{4}}|P_{t-r} \psi(\mathsf{b}_r(x))|\leq \mathrm{e}^{\frac{tC}{4}} \mathrm{e}^{\frac{tC}{4}}\left\|\Psi\right\|_{\infty},
\end{align*}
and so
\begin{align*}
\int \sup_{r\in [0,t]}|N_r|\dd\IP<\infty,
\end{align*}
so that $N$ is actually a true martingale (being a uniformly integrable local martingale) and thus its expectation value is constant in time. Thus,
$$
P_t\psi(x)=\int N_0 \dd\IP=\int N_t \dd\IP=\int \mathrm{e}^{-\frac{1}{4}\int^t_0\mathrm{scal}(\mathsf{b}_s(x)) \dd s}\transport^{x,-1}_t \psi(\mathsf{b}_t(x))\dd\IP,
$$
which completes the proof.
\end{proof}

The Ricci curvature is read as a section 
$$
\mathrm{Ric}\in\Gamma_{C^{\infty}}(M,\mathrm{End}(TM)),
$$
and 
$$
\mathrm{Ric}'\in\Gamma_{C^{\infty}}(M,\mathrm{End}(T^*M))
$$
is defined by duality. Let
$$
R \in \Gamma_{C^{\infty}} (M,  T^*M\otimes T^*M  \otimes \mathrm{End}(T M) )
$$
denote the curvature tensor of the Levi-Civita connection and let
$$
\widetilde{R} \in \Gamma_{C^{\infty}} (M,  T^*M\otimes T^*M  \otimes \mathrm{End}(\Sigma M) )
$$
denote the curvature tensor of the Levi-Civita connection acting on spinors. The section
$$
\underline{\widetilde{R}}   \in \Gamma_{C^{\infty}}(M,\mathrm{End}(T^*M\otimes \Sigma M))
$$
is defined on $\phi\in (T^*M\otimes \Sigma M)_x$, $v\in T_xM$, and an orthonormal frame $e_1,\dots, e_n$ for $T_xM$ by 
$$
\underline{\widetilde{R}}(\phi)(v):=(\mathrm{Ric}'\otimes 1_{\Sigma M})(\phi)(v)-2\sum^n_{j=1}  \widetilde{R}(v,e_i)\phi(v)+\frac{1}{4}\mathrm{scal} \cdot \phi(v),
$$
and the section
$$
\rho   \in \Gamma_{C^{\infty}}(M,\mathrm{Hom}(\Sigma M, T^*M\otimes \Sigma M))
$$
is defined on $\psi\in (\Sigma M)_x$ by
	\begin{align*}
    	\rho (\psi)(v) = \frac{1}{4}(\mathrm{grad}(\mathrm{scal}),v)\psi +\sum^n_{j=1}(\widetilde{\nabla}_{e_i}\widetilde{R})(e_i,v)\psi.
	\end{align*}

We define a continuous adapted process by
\begin{align*}
&\underline{Q}(x) :[0,\infty)\times \Omega\longrightarrow\mathrm{End}\big((T^*M\otimes \Sigma M)_x\big ),\\
(\dd  / \dd s)\underline{Q}_s(x)  &= -  \underline{Q}_s(x) \big( \transport_s^{x,-1} \underline{\widetilde{R}}(\mathsf{b}_s(x))\transport^x_s \big),\quad    \underline{Q}_0(x)  = 1.
    \end{align*}
Let
$$
\underline{\mathsf{b}}(x):[0,\infty)\times \Omega\longrightarrow T_{x}M
$$
denote the stochastic anti-development of $\mathsf{b}(x)$. The actual definition of $\underline{\mathsf{b}}$ (cf. \cite{hsu}) will play no role for us; it will only be essential to know that this process is an adapted Euclidean Brownian motion in $T_xM$. For every $r>0$ let
$$
\tau(x,r) :=\inf\{t\geq 0: \mathsf{b}_{t}(x)  \notin B(x,r)\}: \Omega \longrightarrow [0,\infty]
$$
be the first exit time of $\mathsf{b}(x) $ from the open ball $B(x,r)$. \\
Given $r>0$, $t>0$, $x\in M$, $v\in (TM^*\otimes \Sigma M)_x$ define a set of processes $\mathscr{P}_1(x,r,t,v)$ to be given by all adapted processes having absolutely continuous paths
$$
\ell: [0,t] \times \Omega\longrightarrow (T^*M\otimes \Sigma M)_x,
$$
such that   
$$
\int\int_0^{t\wedge\tau(x,r) }\abs{\dot\ell_s}^2 \dd s\dd\IP<\infty,\quad  \ell_0 =v,\quad \ell_s = 0\quad \text{for all $s \geq t \wedge \tau(x,r)$}.
$$
For $\ell \in \mathscr{P}_1(x,r,t,v)$ define a continuous adapted process by
\begin{align*}
   & U^{\ell} :[0,t]\times \Omega\longrightarrow (\Sigma M)_x ,\\
		& U^{\ell}_r:=  \int_0^{r} \mathrm{e}^{\int^s_0\frac{1}{4}\mathrm{scal}(\mathsf{b}_u(x))\dd u} \mathscr{G}( \dd \underline{\mathsf{b}}_s(x)) \underline{Q}^*_s(x) \dot\ell_s+\frac{1}{2} \int_0^{r}  \mathrm{e}^{\int^s_0\frac{1}{4}\mathrm{scal}(\mathsf{b}_u(x))\dd u}   \transport_s^{x,-1}  \rho(\mathsf{b}_s(x))^* \transport^x_s \underline{Q}^*_s(x) \ell_s  \dd s,
			\end{align*}
where 
$$
 \mathscr{G}\in \Hom\Big(T_xM,\Hom\big((T^*M\otimes \Sigma M)_x
,(\Sigma M)_x\big)\Big)
$$
is given by
$$
 \mathscr{G}(v)A:= Av,\quad A\in (T^*M\otimes \Sigma M)_x=\Hom(T_xM,(\Sigma M)_x),\quad v\in T_x M.
$$

\begin{theorem}[Bismut derivative formula I]\label{B1} Assume $M$ is geodesically complete with $\mathrm{scal}\geq -C$ for some constant $C>0$. Then for all $x\in M$, $t,r>0$ $\psi\in\Gamma_{C^{\infty}_c}(M,\Sigma M)$, $v\in (T^*M\otimes \Sigma M)_x$, $\ell \in \mathscr{P}_1(x,r,t,v)$ one has
$$
\left(\widetilde{\nabla}P_t \psi(x), v\right)= -  \int \mathrm{e}^{-\frac{1}{4}\int^t_0\mathrm{scal}(\mathsf{b}_s(x)) \dd s}\left(\transport_t^{x,-1} \psi(\mathsf{b}_t(x)),U^{\ell}_{t\wedge \tau(x,r)} \right)\dd\IP.
$$  
\end{theorem}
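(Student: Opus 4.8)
The plan is to run the martingale argument behind the covariant Feynman--Kac formula above one derivative higher, i.e.\ for the operator $\widetilde{\nabla}P_t$, by producing an adapted process on $[0,t]$ whose value at time $0$ is $\big(\widetilde{\nabla}P_t\psi(x),v\big)$ and whose value at time $t$ equals, up to the claimed sign, the integrand on the right-hand side; equating the two expectations then gives the formula.

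First I would record the second-order Weitzenb\"ock identity underlying the definitions of $\underline{\widetilde{R}}$ and $\rho$: differentiating the Lichnerowicz formula \eqref{eqn:Lichnerowicz} yields an operator identity
\[
	\widetilde{\nabla}D^2 = \big(\nabla^*\nabla + \underline{\widetilde{R}}\big)\circ\widetilde{\nabla} + c\,\rho,
\]
where $\nabla^*\nabla$ is the Bochner Laplacian of $T^*M\otimes\Sigma M$, $\underline{\widetilde{R}}$ is the curvature endomorphism defined above (which already absorbs the $\tfrac14\mathrm{scal}$ of the Lichnerowicz potential) and $c$ is a universal constant; this is the bundle commutator formula of \cite{driver} transcribed to spinors, and it expresses that $s\mapsto\widetilde{\nabla}P_s\psi$ solves a heat equation on $T^*M\otimes\Sigma M$ with zeroth-order term $\underline{\widetilde{R}}$ and inhomogeneity $c\,\rho(P_s\psi)$. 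Since $\psi\in\Gamma_{C^{\infty}_c}(M,\Sigma M)$, local parabolic regularity makes $(s,y)\mapsto P_s\psi(y)$ and $(s,y)\mapsto\widetilde{\nabla}P_s\psi(y)$ smooth up to $s=0$; by geodesic completeness $\overline{B(x,r)}$ is compact, so these sections together with $\mathrm{scal}$, $\underline{\widetilde{R}}$ and $\rho$ are bounded on $\overline{B(x,r)}\times[0,t]$, and the linear ODE defining $\underline{Q}$ has a bounded solution on $[0,t\wedge\tau(x,r)]$.

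Next I would fix $\ell\in\mathscr{P}_1(x,r,t,v)$ and consider, on $[0,t]$, the process (written up to the sign and normalisation conventions of the appendix)
\[
	N_r := \big(\underline{Q}_r(x)\transport^{x,-1}_r\widetilde{\nabla}P_{t-r}\psi(\mathsf{b}_r(x)),\ell_r\big) - \big(\mathrm{e}^{-\frac14\int_0^r\mathrm{scal}(\mathsf{b}_s(x))\dd s}\transport^{x,-1}_r P_{t-r}\psi(\mathsf{b}_r(x)),U^{\ell}_r\big),
\]
and apply It\^o's formula, checking that the bounded-variation part vanishes term by term. The backward time-derivative $\tfrac{\dd}{\dd r}\widetilde{\nabla}P_{t-r}\psi = \widetilde{\nabla}D^2P_{t-r}\psi$ together with the Laplacian It\^o-correction from $\mathsf{b}_r(x)$ cancels the Bochner Laplacian; the defining ODE of $\underline{Q}$ cancels the $\underline{\widetilde{R}}$-term (hence also the $\tfrac14\mathrm{scal}$ inside it); the cross-variation of the $\mathscr{G}$-stochastic integral inside $U^{\ell}_r$ with the Feynman--Kac martingale $\mathrm{e}^{-\frac14\int_0^r\mathrm{scal}(\mathsf{b}_s(x))\dd s}\transport^{x,-1}_r P_{t-r}\psi(\mathsf{b}_r(x))$ --- whose stochastic differential has integrand $\mathrm{e}^{-\frac14\int_0^r\mathrm{scal}(\mathsf{b}_s(x))\dd s}\transport^{x,-1}_r\widetilde{\nabla}P_{t-r}\psi(\mathsf{b}_r(x))$, exactly as in the proof of the covariant Feynman--Kac formula --- cancels the $\dot\ell_r$-drift produced by differentiating $\ell_r$ in the first summand; and the $\rho$-inhomogeneity from the Weitzenb\"ock identity is absorbed by the $\rho^*$-drift term of $U^{\ell}_r$, with the positive scalar-curvature weight inside $U^{\ell}$ undoing the negative one in the Feynman--Kac factor at each instant. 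Hence $N$ has vanishing drift, i.e.\ is a local martingale --- the content of the Driver--Thalmaier computation \cite{driver}, which one could also invoke directly.

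Finally I would upgrade $N$ to a genuine martingale on $[0,t]$. Because $\ell_s$ and $\dot\ell_s$ vanish for $s\ge t\wedge\tau(x,r)$, the process $N$ only ever involves the data along $\mathsf{b}(x)$ before it leaves $B(x,r)$, where everything is bounded; together with $\int\!\int_0^{t\wedge\tau(x,r)}|\dot\ell_s|^2\,\dd s\,\dd\IP<\infty$ and stochastic completeness (the standing assumption of this section, giving $\mathsf{b}_t(x)\in M$ a.s.), the running maximum of $|N|$ on $[0,t]$ is integrable, so $N$ is uniformly integrable and $\mathbb{E}[N_t]=\mathbb{E}[N_0]$. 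At $r=0$ one has $\underline{Q}_0(x)=1$, $\transport^{x,-1}_0=\mathrm{id}$, $\ell_0=v$ and $U^{\ell}_0=0$, so $N_0=\big(\widetilde{\nabla}P_t\psi(x),v\big)$; at $r=t$ one has $\ell_t=0$ (since $t\ge t\wedge\tau(x,r)$), which kills the first summand, while $\dot\ell_s=\ell_s=0$ for $s\ge t\wedge\tau(x,r)$ gives $U^{\ell}_t=U^{\ell}_{t\wedge\tau(x,r)}$ and $P_0\psi=\psi$, so that $N_t=-\mathrm{e}^{-\frac14\int_0^t\mathrm{scal}(\mathsf{b}_s(x))\dd s}\big(\transport^{x,-1}_t\psi(\mathsf{b}_t(x)),U^{\ell}_{t\wedge\tau(x,r)}\big)$. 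Equating expectations yields the claim. The main obstacle is the It\^o bookkeeping of the third step --- checking that each drift contribution cancels with precisely the right constant and adjoint, which is exactly what the data $\underline{\widetilde{R}}$, $\rho$, $\underline{Q}$, $\mathscr{G}$, $U^{\ell}$ and the appendix's normalisation of $\underline{\mathsf{b}}$ are engineered for --- together with making the local-to-true-martingale passage work \emph{without} any injectivity-radius or bounded-geometry assumption; the stopping at $\tau(x,r)$ hard-wired into $\mathscr{P}_1$ is precisely the device that enables this.
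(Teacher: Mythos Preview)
Your strategy is the same as the paper's: build the Driver--Thalmaier local martingale combining the covariant Feynman--Kac process with the ``one derivative higher'' process, upgrade to a true martingale, and equate expectations at the endpoints. Two points deserve comment. First, the paper's auxiliary process is
\[
Z_r=(\underline{N}_r,\ell_r)-(N_r,U^{\ell}_r)\quad\text{with}\quad \underline{N}_r=\mathrm{e}^{-\frac14\int_0^r\mathrm{scal}(\mathsf{b}_s(x))\,\dd s}\,\transport^{x,-1}_r\widetilde{\nabla}P_{t-r}\psi(\mathsf{b}_r(x)),
\]
i.e.\ the \emph{scalar} damping in the first summand, not $\underline{Q}_r$; the matrix process $\underline{Q}^*$ enters only through $U^{\ell}$. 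With your choice of $\underline{Q}_r$ in the first summand the It\^o cancellations you sketch do not match the given $U^{\ell}$ (the $\dot\ell$--cross-variation term would no longer carry the $\underline{Q}^*$ factor needed to cancel the curvature drift), so you should either adopt the paper's $\underline{N}_r$ or, as you already indicate, simply cite \cite[Theorem~3.7]{driver} for the local-martingale property rather than rederive it. Second, your endgame differs: the paper stops at $\tau(x,r)$, uses BDG to get a true martingale on $[0,t\wedge\tau(x,r)]$, and then invokes the covariant Feynman--Kac formula together with the strong Markov property to pass from $P_{t-t\wedge\tau(x,r)}\psi(\mathsf{b}_{t\wedge\tau(x,r)}(x))$ to $\psi(\mathsf{b}_t(x))$; you instead run the (unstopped) process to time $t$, using $\ell_s=\dot\ell_s=0$ for $s\ge t\wedge\tau(x,r)$ to kill the first summand and freeze $U^{\ell}$, and the global bound $\mathrm{scal}\ge -C$ plus Kato--Simon for the second. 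Your route is legitimate and avoids the strong-Markov step, but you must then check that the Feynman--Kac factor $N_r$ is a genuine (not just local) martingale on all of $[0,t]$ --- which is exactly the content of the preceding theorem under $\mathrm{scal}\ge -C$ --- so that $(N_r,U^{\ell}_{t\wedge\tau(x,r)})$ remains a martingale past $\tau(x,r)$.
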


\begin{proof} With $N$ as in the above proof and
\begin{align*}
&\underline{N}:[0,t]\times \Omega \longrightarrow (T^*M\otimes\Sigma M)_x,\\
&\underline{N}_r:=\mathrm{e}^{-\frac{1}{4}\int^{r}_0\mathrm{scal}(\mathsf{b}_s(x)) \dd s}\transport^{x,-1}_r \widetilde{\nabla}P_{t-r} \psi(\mathsf{b}_r(x))
\end{align*}
the process 
$$
Z:[0,t]\times \Omega \longrightarrow \IC,\quad Z_r:= (\underline{N}_r, \ell_r)- (N_r,U_r^{\ell})
$$
is a local martingale (Theorem 3.7 in \cite{driver}). It follows that the stopped process $Z^{ \tau(x,r) }$ is a martingale (as it is a uniformly integrable local martingale by the Burkholder-Davis-Gundy inequality; cf.\ (\ref{bdg}) in the appendix) so that 
\begin{align*}
&\left(\widetilde{\nabla}P_t \psi(x), v\right)=\int Z_0 \dd \IP=\int Z_{t\wedge \tau(x,r)} \dd \IP \\
&= -\int\left( \mathrm{e}^{-\frac{1}{4}\int^{t\wedge\tau(x,r)}_0\mathrm{scal}(\mathsf{b}_s(x)) \dd s}\transport_{t\wedge\tau(x,r)}^{x,-1} P_{t-t\wedge\tau(x,r)}\psi(\mathsf{b}_{t\wedge\tau(x,r)}(x)) , U^{\ell}_{t\wedge\tau(x,r)}\right)\dd \IP.
\end{align*}
Using the covariant Feynman-Kac formula and the strong Markoff property of Brownian motion, the RHS of the latter equation is precisely the RHS of the first Bismut derivative formula. 
\end{proof}

Given $r>0$, $t>0$, $x\in M$, $\zeta\in  (\Sigma M)_x$ define a set of processes $\mathscr{P}_2(x,r,t,\zeta)$ to be given by all pathwise absolutely continuous processes 
$$
\ell: [0,t] \times \Omega\longrightarrow (\Sigma M)_x
$$
such that   
$$
\int\int_0^{t\wedge\tau(x,r) }\abs{\dot\ell_s}^2 \dd s\dd \IP<\infty,\quad  \ell_0 =\zeta,\quad \ell_s = 0\quad \text{for all $s \geq t \wedge \tau(x,r)$}.
$$
%For $\ell \in \mathscr{P}(x,r,t,\zeta)$ define a semimartingale
%\begin{align*}
%   & U^{\ell} :[0,t]\times \Omega\longrightarrow (\Sigma M)_x ,\\
	%	& U^{\ell}_r:=  \int_0^{r} \mathrm{e}^{\int^s_0\frac{1}{4}\mathrm{scal}(\mathsf{b}_u(x))\dd u} \dd \underline{\mathsf{b}}_s(x) \underline{Q}^*_s(x) \dot\ell_s.
	%		\end{align*}

\begin{theorem}[Bismut derivative formula II]\label{B2} Assume $M$ is geodesically complete with $\mathrm{Ric}\geq -C$ for some constant $C>0$. Then for all $x\in M$, $t,r>0$ $\psi\in\Gamma_{C^{\infty}_c}(M,\Sigma M)$, $\zeta\in ( \Sigma M)_x$, $\ell \in \mathscr{P}_2(x,r,t,\zeta)$ one has
$$
\left(DP_t \psi(x), \zeta\right)=   \int \mathrm{e}^{-\frac{1}{4}\int^t_0\mathrm{scal}(\mathsf{b}_s(x)) \dd s}\left(\transport_t^{x,-1} \psi(\mathsf{b}_t(x)),\int^{t\wedge \tau(x,r)}_0 \dd\underline{\mathsf{b}}_s(x) \cdot \dot{\ell}_s \right)\dd\IP.
$$  
\end{theorem}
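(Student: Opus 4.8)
The plan is to run the same martingale argument as in the proof of Theorem~\ref{B1}, taking advantage of the fact that $D$ commutes with $D^{2}$ — hence with $P_{s}$ and with the backward heat equation — so that, in contrast to Theorem~\ref{B1}, no curvature‑correction flow of the type $\underline{Q}$ and no $\rho$‑term occur; this is precisely why the formula to be proved is so much simpler (and it is the spinorial manifestation of the fact that for an operator commuting with its Weitzenböck Laplacian the Driver–Thalmaier formula reduces to its ``$\mathrm d$ on functions'' form). Throughout, the assumption $\mathrm{Ric}\geq -C$ plays exactly the same role as in the proofs of the covariant Feynman--Kac formula and of Theorem~\ref{B1}: it gives stochastic completeness and, via the Lichnerowicz formula \eqref{eqn:Lichnerowicz}, the Kato--Simon estimate \eqref{ks}, which bounds all processes below on $[0,t]$ and upgrades the local martingales that appear to genuine uniformly integrable ones on $[0,t]$. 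Since $\psi\in\Gamma_{C^{\infty}_{c}}(M,\Sigma M)$ we also have $D\psi\in\Gamma_{C^{\infty}_{c}}(M,\Sigma M)$, so $(s,y)\mapsto P_{s}(D\psi)(y)=DP_{s}\psi(y)$ is smooth up to $s=0$ by local parabolic regularity, as in the proof of the covariant Feynman--Kac formula.

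Write $\varepsilon_{r}:=\mathrm{e}^{-\frac14\int_{0}^{r}\mathrm{scal}(\mathsf b_{s}(x))\dd s}$ and introduce
\[
  N_{r}:=\varepsilon_{r}\transport^{x,-1}_{r}P_{t-r}\psi(\mathsf b_{r}(x)),\qquad
  N^{D}_{r}:=\varepsilon_{r}\transport^{x,-1}_{r}(DP_{t-r}\psi)(\mathsf b_{r}(x))=\varepsilon_{r}\transport^{x,-1}_{r}P_{t-r}(D\psi)(\mathsf b_{r}(x)).
\]
Because $DP_{t-r}\psi$ solves the same backward heat equation as $P_{t-r}\psi$, the proof of the covariant Feynman--Kac formula applied verbatim with $D\psi$ in place of $\psi$ shows that $N^{D}$ is a uniformly integrable martingale on $[0,t]$. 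By It\^o's formula (with the drift terms cancelling via \eqref{eqn:Lichnerowicz}, exactly as in that proof), $N$ is a martingale with $\dd N_{r}=\mathscr G(\dd\underline{\mathsf b}_{r}(x))\,\underline N_{r}$, where $\underline N_{r}:=\varepsilon_{r}\transport^{x,-1}_{r}\widetilde\nabla P_{t-r}\psi(\mathsf b_{r}(x))\in\Hom(T_{x}M,(\Sigma M)_{x})$. Finally, writing $\mathsf c\colon T^{*}M\otimes\Sigma M\to\Sigma M$, $\mathsf c(\xi\otimes\sigma)=\xi^{\sharp}\cdot\sigma$, for the Dirac symbol (so that $D=\mathsf c\circ\widetilde\nabla$ and $\mathsf c$ is the fibrewise map whose adjoint is computed in Lemma~\ref{lem:fibrewise_adjoint_L}), the parallelity of Clifford multiplication and of the musical isomorphism yields $\transport^{x}\circ\mathsf c=\mathsf c\circ\transport^{x}$, hence
\[
  N^{D}_{r}=\mathsf c(\underline N_{r})=\textstyle\sum_{i}e_{i}\cdot\underline N_{r}(e_{i})
\]
for any $g$-orthonormal frame $(e_{1},\dots,e_{n})$ at $x$.

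Now fix $\ell\in\mathscr P_{2}(x,r,t,\zeta)$ and put $Y_{r}:=\int_{0}^{r}\dd\underline{\mathsf b}_{s}(x)\cdot\dot\ell_{s}$ (Clifford multiplication of the $T_{x}M$‑valued increment on $\dot\ell_{s}\in(\Sigma M)_{x}$); since $\dot\ell$ is supported in $[0,t\wedge\tau(x,r)]$ and square‑integrable there, $Y$ is a martingale with $Y_{0}=0$ and $Y_{t}=\int_{0}^{t\wedge\tau(x,r)}\dd\underline{\mathsf b}_{s}(x)\cdot\dot\ell_{s}$. The key step is to show that, for a suitable universal constant $c$ (dictated by the normalization of the anti‑development $\underline{\mathsf b}(x)$ fixed in the appendix),
\[
  Z_{r}:=(N^{D}_{r},\ell_{r})+c\,(N_{r},Y_{r})
\]
is a local martingale. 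Applying It\^o's product rule and using that $N^{D}$, $N$, $Y$ are (local) martingales while $\ell$ has absolutely continuous paths, the finite‑variation part of $Z$ is $(N^{D}_{r},\dot\ell_{r})\dd r+c\,\dd\langle N,Y\rangle_{r}$; from $\dd N_{r}=\mathscr G(\dd\underline{\mathsf b}_{r}(x))\underline N_{r}$ and $\dd Y_{r}=\dd\underline{\mathsf b}_{r}(x)\cdot\dot\ell_{r}$ one computes $\dd\langle N,Y\rangle_{r}$ to be a fixed multiple of $\sum_{i}(\underline N_{r}(e_{i}),e_{i}\cdot\dot\ell_{r})\dd r$, and the skew‑symmetry $(e_{i}\cdot a,b)=-(a,e_{i}\cdot b)$ together with $\sum_{i}e_{i}\cdot\underline N_{r}(e_{i})=N^{D}_{r}$ turns this into the same multiple of $-(N^{D}_{r},\dot\ell_{r})\dd r$; choosing $c$ accordingly cancels the drift. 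The Kato--Simon bound (boundedness of $N$, $N^{D}$ on $[0,t]$), the $L^{2}$‑bound on $\ell$ and the Burkholder--Davis--Gundy inequality applied to $Y$ then make $Z$ a uniformly integrable martingale on $[0,t]$, so $\E[Z_{0}]=\E[Z_{t}]$.

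To conclude, $\ell_{0}=\zeta$ and $Y_{0}=0$ give $\E[Z_{0}]=(DP_{t}\psi(x),\zeta)$, while $\ell_{t}=0$, $P_{0}=\Id$ and the value of $Y_{t}$ give $\E[Z_{t}]=c\,\E\big[\varepsilon_{t}\big(\transport^{x,-1}_{t}\psi(\mathsf b_{t}(x)),\int_{0}^{t\wedge\tau(x,r)}\dd\underline{\mathsf b}_{s}(x)\cdot\dot\ell_{s}\big)\big]$, which with the constant $c$ fixed above is exactly the asserted Bismut derivative formula; equivalently, one may stop $Z$ at $\tau(x,r)$ and finish by invoking the covariant Feynman--Kac formula and the strong Markov property of Brownian motion, precisely as at the end of the proof of Theorem~\ref{B1}. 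The only genuinely delicate point is the computation of $\dd\langle N,Y\rangle_{r}$ and the pinning down of $c$, where the skew‑symmetry of Clifford multiplication has to be balanced against the normalization of $\underline{\mathsf b}(x)$ in a way consistent with Theorem~\ref{B1}; the passage from local to genuine martingale, and the smoothness/integrability bookkeeping, are routine and follow the pattern already established above.
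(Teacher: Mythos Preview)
Your proof is correct and follows the same martingale strategy as the paper, which defines exactly your process $Z_r$ (with $c=1$, citing \cite[Theorem~3.7]{driver} for the local-martingale property rather than sketching the It\^o computation as you do), stops at $\tau(x,r)$, and concludes via the covariant Feynman--Kac formula and the strong Markov property. Your version is more self-contained in unpacking the drift cancellation, and you correctly observe that the direct uniform-integrability route (without stopping) also works here; the only loose end is verifying $c=1$ under the paper's normalizations, which you rightly flag as the delicate point and which the paper itself absorbs into the cited reference.
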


\begin{proof} With $N$ as in the above proof and
\begin{align*}
&\underline{N}:[0,t]\times \Omega \longrightarrow \Sigma M_x,\\
&\underline{N}_r:=\mathrm{e}^{-\frac{1}{4}\int^{r}_0\mathrm{scal}(\mathsf{b}_s(x)) \dd s}\transport^{x,-1}_r DP_{t-r} \psi(\mathsf{b}_r(x))
\end{align*}
the process 
$$
Z:[0,t]\times \Omega \longrightarrow \IC,\quad Z_r:= (\underline{N}_r, \ell_r)+ \left(N_r,\int^{r}_0 \dd\underline{\mathsf{b}}_s(x) \cdot \dot{\ell}_s\right)
$$
is a local martingale (again by Theorem 3.7 in \cite{driver}). It follows that the stopped process $Z^{ \tau(x,r) }$ is a martingale so that 
\begin{align*}
&\left(DP_t \psi(x),\zeta\right)=\int Z_0 \dd \IP=\int Z_{t\wedge \tau(x,r)} \dd \IP \\
&= \int\left( \mathrm{e}^{-\frac{1}{4}\int^{t\wedge\tau(x,r)}_0\mathrm{scal}(\mathsf{b}_s(x)) \dd s}\transport_{t\wedge\tau(x,r)}^{x,-1} P_{t-t\wedge\tau(x,r)}\psi(\mathsf{b}_{t\wedge\tau(x,r)}(x)) ,\int^{t\wedge \tau(x,r)}_0 \dd\underline{\mathsf{b}}_s(x) \cdot \dot{\ell}_s\right) \dd \IP.
\end{align*}
Using again the covariant Feynman-Kac formula and the strong Markoff property of Brownian motion, the RHS of the latter equation is precisely the RHS of the second Bismut derivative formula. 
\end{proof}

We record some consequences of the Feynman-Kac formula and the Bismut derivative formula, respectively: to this end, let $\ILL(\IHH_1,\IHH_2)$denote the space of bounded operators between two Hilbert spaces $\IHH_1$, $\IHH_2$, and for $p\in[1,\infty)$ let $\ILL^p(\IHH_1,\IHH_2)$ denote the $p$-th Schatten class (so $p=1$ is the trace class and $p=2$ is the Hilbert-Schmidt class), where
$$
\ILL(\IHH_1):=\ILL(\IHH_1,\IHH_1),\quad \ILL^p(\IHH_1):=\ILL^p(\IHH_1,\IHH_1).
$$

\begin{remark}\label{abst}
\begin{align}
&T\in \ILL^p \Leftrightarrow T^*\in \ILL^p\quad\text{for all $p$},\\
&\ILL^p\circ \ILL\subset \ILL^p\quad\text{for all $p$},\\
&\ILL^p\subset \ILL^q\quad\text{for all $p\leq q$},\\
&\ILL^p\circ \ILL^q\subset \ILL^r\quad\text{for all $p,q,r$ with $1/p+1/q=1/r$}.
\end{align}
\end{remark}

In particular, the product of two Hilbert-Schmidt operators is trace class, and the product of a bounded operator and a trace class operator (resp. Hilbert-Schmidt operator) is again trace class (resp. Hilbert-Schmidt). Assume we are given metric vector bundles $E,F$ over $M$ and a bounded operator 
$$
T\in \ILL\big(\Gamma_{L^2}(M,E),\Gamma_{L^2}(M,F)\big)
$$
which is given by a pointwise well-defined $L^1_\loc$-integral kernel, that is, 
$$
Tf(x)=\int_M T(x,y)f(y) \dd\mu(y),
$$ 
where 
$$
T(\cdot,\cdot)\in \Gamma_{L^1_{\loc}}(M\times M, E\boxtimes F^*)
$$
that is, an $L^1_\loc$-map
$$
M\times M\ni (x,y)\longmapsto T(x,y)\in\mathrm{Hom}(E_y,F_x)\in E\boxtimes F^*.
$$
Then one has
$$
T\in\ILL^2(\Gamma_{L^2}(M,E),\Gamma_{L^2}(M,F)),\quad\text{ if $\>\int_M\int_M |T(x,y)|^2 \dd\mu(x)\dd\mu(y)<\infty$}.
$$
%and
%$$
%T\in\ILL^1(\Gamma_{L^2}(M,E),\Gamma_{L^2}(M,F)),\quad\text{ if $\>\int_M |T(x,x)| \dd\mu(x)<\infty$.}
%$$

\begin{corollary}\label{aappq0} Assume $M$ is geodesically complete with $\mathrm{Ric}\geq -C$ for some constant $C> 0$ and let $t>0$. Then for every metric vector bundle $E$ over $M$, and every 
$$
A\in \Gamma_{L^1_{\loc}}\big(M,\mathrm{Hom}(\Sigma M,E)\big)\quad \text{(considered as a multiplication operator)}
$$
which satisfies
$$
\int_M\frac{|A(x)|^2}{  \mu(B(x,\sqrt{t}))} \dd\mu(x)<\infty,\quad\text{one has}\quad AP_t\in \ILL^2\big(\Gamma_{L^2}(M,\Sigma M),\Gamma_{L^2}(M,E)\big).
$$
\end{corollary}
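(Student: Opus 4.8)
The plan is to write $AP_t$ as an integral operator and bound its Hilbert--Schmidt norm by combining the Kato--Simon domination estimate \eqref{ks} with the classical on-diagonal heat kernel upper bound valid under a lower Ricci bound. First I would use that $P_t=\mathrm{e}^{-tD^2}$ has a smooth integral kernel $(x,y)\mapsto P_t(x,y)\in\mathrm{Hom}((\Sigma M)_y,(\Sigma M)_x)$, so that $AP_t$ is the integral operator with kernel $(x,y)\mapsto A(x)P_t(x,y)$, obeying the pointwise bound $|A(x)P_t(x,y)|\le |A(x)|\,|P_t(x,y)|$. By the Hilbert--Schmidt criterion recalled immediately before the statement it then suffices to show
\[
\int_M\!\int_M |A(x)|^2\,|P_t(x,y)|^2\,\dd\mu(x)\,\dd\mu(y)<\infty ;
\]
once this double integral is finite, the operator it defines is Hilbert--Schmidt and it agrees with $AP_t$ on the dense subspace $\Gamma_{C^{\infty}_c}(M,\Sigma M)$, whence $AP_t$ is identified with it.

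To estimate the inner integral I would invoke the Kato--Simon inequality \eqref{ks} in the form $|P_t(x,y)|\le \mathrm{e}^{tC/4}\,\mathrm{e}^{-t\Delta}(x,y)$ (as used in the proof of the covariant Feynman--Kac formula), $\mathrm{e}^{-t\Delta}(\cdot,\cdot)$ being the scalar heat kernel, so that, by the semigroup property and the symmetry of the scalar heat kernel,
\[
\int_M |P_t(x,y)|^2\,\dd\mu(y)\le \mathrm{e}^{tC/2}\int_M\big(\mathrm{e}^{-t\Delta}(x,y)\big)^2\,\dd\mu(y) = \mathrm{e}^{tC/2}\,\mathrm{e}^{-2t\Delta}(x,x) .
\]
Then I would apply the standard on-diagonal heat kernel bound following from the Li--Yau gradient estimate under $\mathrm{Ric}\ge -C$: for the fixed time $2t$ there is a constant $c=c(n,C,t)$ with $\mathrm{e}^{-2t\Delta}(x,x)\le c/\mu(B(x,\sqrt{2t}))$ uniformly in $x\in M$, and since $\mu(B(x,\sqrt{2t}))\ge \mu(B(x,\sqrt t))$ this yields $\int_M|P_t(x,y)|^2\,\dd\mu(y)\le c\,\mathrm{e}^{tC/2}/\mu(B(x,\sqrt t))$.

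Integrating against $|A(x)|^2$ then gives
\[
\|AP_t\|_{\ILL^2}^2\le c\,\mathrm{e}^{tC/2}\int_M\frac{|A(x)|^2}{\mu(B(x,\sqrt t))}\,\dd\mu(x)<\infty
\]
by hypothesis, which is the assertion.

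The only non-routine ingredient is the on-diagonal estimate $\mathrm{e}^{-s\Delta}(x,x)\lesssim 1/\mu(B(x,\sqrt s))$, and I expect this to be the main point requiring care: one must cite it with a constant that is uniform in $x$, noting that for the fixed time $s=2t$ the Li--Yau estimate contributes only the finite factor $\mathrm{e}^{c(n)Cs}$, so no global volume-doubling assumption beyond the Ricci lower bound is needed. The minor point to be addressed carefully is the reduction from the abstract (a priori only densely defined) operator $AP_t$ to the square-integrability of its $L^2$-kernel; everything else is routine bookkeeping with integral kernels and the domination bound \eqref{ks}.
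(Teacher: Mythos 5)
Your argument is the same as the paper's: bound the kernel of $AP_t$ via the Kato--Simon domination \eqref{ks} by the scalar heat kernel, then use the Li--Yau on-diagonal estimate $\mathrm{e}^{-s\Delta}(x,y)\le C_s/\mu(B(x,\sqrt{s}))$ under the Ricci lower bound and apply the Hilbert--Schmidt criterion. The only small difference is that you carry out the semigroup step $\int_M\mathrm{e}^{-t\Delta}(x,y)^2\,\dd\mu(y)=\mathrm{e}^{-2t\Delta}(x,x)$ more carefully than the paper's display (which writes $\mathrm{e}^{-t\Delta}(x,x)$ there — immaterial since the constant $C_t$ absorbs the shift from $t$ to $2t$ and $\mu(B(x,\sqrt{2t}))\ge\mu(B(x,\sqrt t))$), so your write-up is, if anything, slightly cleaner.
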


\begin{proof} We note that by the Li-Yau heat kernel estimate for the scalar Laplacian one has
$$
\mathrm{e}^{-t\Delta}(x,y)\leq \frac{C_t}{\mu(B(x,\sqrt{t}))} \quad\text{ for all $t>0$, $x,y\in M$},
$$
for some $C_t<\infty$, where in the sequel, $C_t$ denotes a constant which depends on $t$ but is uniform in $x,y\in M$, whose actual value may differ from line to line. By the Kato-Simon inequality (\ref{ks}) we have 
\begin{align*}
|[AP_t](x,y)|=|A(x)P_t(x,y)|\leq C_t |A(x)| \mathrm{e}^{-t\Delta}(x,y),
\end{align*}
so that by the Li-Yau estimate 
%in the first case one has
%$$
%\int_M |[AP_t](x,x)| \dd\mu(x)\leq C_t\int_M\frac{|A(x)|}{  \mu(B(x,\sqrt{t}))} \dd\mu(x),
%$$
%and in the second case one has
\begin{align*}
&\int_M\int_M |[AP_t](x,y)|^2 \dd\mu(x)\dd\mu(y)\leq C_t\int_M|A(x)|^2\int_M \mathrm{e}^{-t\Delta}(x,y)^2 \dd\mu(y)\dd\mu(x)\\
&=C_t\int_M|A(x)|^2 \mathrm{e}^{-t\Delta}(x,x) \dd\mu(x)\leq C_t\int_M\frac{|A(x)|^2}{  \mu(B(x,\sqrt{t}))} \dd\mu(x),
\end{align*}
completing the proof.
\end{proof}

Now we can prove:

\begin{corollary}\label{aappq1} Assume $M$ is geodesically complete with $|R|\leq C$ for some $C<\infty$ and let $t>0$. Then for every metric vector bundle $E$ over $M$, every
$$
A\in \Gamma_{L^1_{\loc}}\big(M,\mathrm{Hom}(T^*M\otimes \Sigma M,E)\big)
$$
which satisfies  
$$
\int_M\frac{\big(1+\max_{y\in B(x,1)}|\rho(y)|\big)^2|A(x)|^2}{\mu(B(x,\sqrt{t}))} \dd \mu(x)<\infty,\quad\text{ one has }\quad \>A\widetilde{\nabla}P_t\in \ILL^2\Big(\Gamma_{L^2}(M,\Sigma), \Gamma_{L^2}(M,E)\Big).
$$
\end{corollary}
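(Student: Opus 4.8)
The plan is to verify the Hilbert--Schmidt kernel criterion recorded just before Corollary~\ref{aappq0}. Since $P_t$ has a smooth integral kernel, so does $\widetilde{\nabla}P_t$, namely $\widetilde{\nabla}_xP_t(x,y)\in\Hom\big((\Sigma M)_y,(T^*M\otimes\Sigma M)_x\big)$, and then $A\widetilde{\nabla}P_t$ has kernel $A(x)\widetilde{\nabla}_xP_t(x,y)$, which satisfies $|A(x)\widetilde{\nabla}_xP_t(x,y)|\le|A(x)|\,|\widetilde{\nabla}_xP_t(x,y)|$. Hence by Fubini it will be enough to prove, with a constant $C_t$ independent of $x$, the pointwise-in-$x$ estimate
\[
	\int_M\big|\widetilde{\nabla}_xP_t(x,y)\big|^2\,\dd\mu(y)\;\le\; \frac{C_t\big(1+\max_{y\in B(x,1)}|\rho(y)|\big)^2}{\mu(B(x,\sqrt{t}))}\,,
\]
after which $\int_M\int_M|A(x)\widetilde{\nabla}_xP_t(x,y)|^2\dd\mu(y)\dd\mu(x)\le C_t\int_M\tfrac{|A(x)|^2(1+\max_{B(x,1)}|\rho|)^2}{\mu(B(x,\sqrt t))}\dd\mu(x)<\infty$ is exactly the hypothesis. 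Observe that $|R|\le C$ forces $|\Ric|,|\widetilde{R}|,|\mathrm{scal}|\le c_nC$, so that $M$ is stochastically complete (hence Theorem~\ref{B1} applies) and one has the uniform pathwise bounds $|\underline{Q}_s(x)|\le\mathrm{e}^{c_nCs}$ and $\mathrm{e}^{\pm\frac14\int_0^s\mathrm{scal}(\mathsf{b}_u(x))\,\dd u}\le\mathrm{e}^{c_nCs}$ for the objects of Theorem~\ref{B1}.

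To get the kernel estimate I would fix $x\in M$ and a unit vector $v\in(T^*M\otimes\Sigma M)_x$ and apply Theorem~\ref{B1} with radius $r=1$ and with some $\ell\in\mathscr{P}_1(x,1,t,v)$ decaying (essentially) linearly to $0$ on $[0,t\wedge\tau(x,1)]$, so that $|\ell_s|\le|v|$ and $\int_0^{t\wedge\tau(x,1)}|\dot\ell_s|^2\dd s\le|v|^2/(t\wedge\tau(x,1))$. The decisive point is that, because the right-hand side of Theorem~\ref{B1} only involves the Brownian path up to $\tau(x,1)$, the field $\rho$ appears only through $\rho(\mathsf{b}_s(x))$ with $\mathsf{b}_s(x)\in B(x,1)$; together with $|\underline{Q}_s|\le\mathrm{e}^{c_nCt}$, $|\ell_s|\le|v|$ and the scalar-weight bound this makes the drift part of $U^{\ell}_{t\wedge\tau(x,1)}$ pathwise bounded by $C_t\max_{B(x,1)}|\rho|$, while the It\^o isometry (or Burkholder--Davis--Gundy) bounds the martingale part in $L^2(\IP)$ by $C_t(\E[(t\wedge\tau(x,1))^{-1}])^{1/2}$. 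As $\Ric\ge -c_nC$, a standard Gaussian-type exit estimate, uniform in $x$ and requiring no injectivity radius bound, gives $\sup_{x}\E[(t\wedge\tau(x,1))^{-1}]<\infty$; hence $\big(\int|U^{\ell}_{t\wedge\tau(x,1)}|^2\dd\IP\big)^{1/2}\le C_t\big(1+\max_{y\in B(x,1)}|\rho(y)|\big)$, uniformly in $x$ and in $v$.

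The last step converts this into the kernel bound. Applying Cauchy--Schwarz in $\IP$ to Theorem~\ref{B1} and using that $\mathsf{b}_t(x)$ has transition density $y\mapsto\mathrm{e}^{-t\Delta}(x,y)$, one gets for every $\psi\in\Gamma_{C^{\infty}_c}(M,\Sigma M)$
\[
	\big|\big(\widetilde{\nabla}P_t\psi(x),v\big)\big|\;\le\; C_t\big(1+\max_{B(x,1)}|\rho|\big)\Big(\int_M\mathrm{e}^{-t\Delta}(x,y)\,|\psi(y)|^2\,\dd\mu(y)\Big)^{1/2}\,,
\]
so $\psi\mapsto(\widetilde{\nabla}P_t\psi(x),v)$ extends to a bounded functional on $\Gamma_{L^2}(M,\Sigma M;\mathrm{e}^{-t\Delta}(x,\cdot)\dd\mu)$; Riesz representation then yields $\eta_{x,v}$ with $\widetilde{\nabla}_xP_t(x,y)^{*}v=\mathrm{e}^{-t\Delta}(x,y)\eta_{x,v}(y)$ ($\mu$-a.e.) and $\int_M|\eta_{x,v}|^2\mathrm{e}^{-t\Delta}(x,\cdot)\dd\mu\le C_t^2(1+\max_{B(x,1)}|\rho|)^2$. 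Therefore
\[
	\int_M\big|\widetilde{\nabla}_xP_t(x,y)^{*}v\big|^2\dd\mu(y)=\int_M|\eta_{x,v}|^2\,\mathrm{e}^{-t\Delta}(x,\cdot)^2\,\dd\mu\;\le\;\big\|\mathrm{e}^{-t\Delta}(x,\cdot)\big\|_{\infty}\,C_t^2(1+\max_{B(x,1)}|\rho|)^2\,,
\]
and the Li--Yau bound $\|\mathrm{e}^{-t\Delta}(x,\cdot)\|_{\infty}\le C_t/\mu(B(x,\sqrt{t}))$ (uniform in $x$, using only $\Ric\ge -c_nC$), followed by summing $|\widetilde{\nabla}_xP_t(x,y)^{*}v|^2$ over an orthonormal basis of the finite-dimensional fibre $(T^*M\otimes\Sigma M)_x$, gives the desired kernel estimate and finishes the proof.

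I expect the main obstacle to be the $L^2(\IP)$-control of $U^{\ell}_{t\wedge\tau(x,1)}$: one must choose $\ell$ so that \emph{only} the values of $\rho$ on $B(x,1)$ enter — which is precisely why Theorem~\ref{B1} is formulated with the stopping time $\tau(x,1)$ instead of a deterministic time — and one must bound $\E[(t\wedge\tau(x,1))^{-1}]$ uniformly in $x$ \emph{without} any injectivity radius assumption, which is where the two-sided bound $|R|\le C$ (giving $\Ric\ge -c_nC$) is genuinely used. A second subtlety is that one cannot bound $|\widetilde{\nabla}_xP_t(x,y)|$ pointwise by a uniform multiple of $\mathrm{e}^{-t\Delta}(x,y)$, because conditioning on $\mathsf{b}_t(x)=y$ produces bridge expectations of $(t\wedge\tau(x,1))^{-1}$ that blow up for $y$ far from $x$; this is what forces one to integrate in $y$ \emph{before} estimating, as in the Riesz step above.
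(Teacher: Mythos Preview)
Your approach is essentially the paper's: apply Theorem~\ref{B1} with $r=1$, split $U^{\ell}$ into its It\^o and drift parts, control the drift by $\max_{B(x,1)}|\rho|$ (this is exactly why the stopping at $\tau(x,1)$ is built into Theorem~\ref{B1}), apply Cauchy--Schwarz in $\IP$, and then pass to the $L^2(\dd\mu(y))$-kernel estimate via Riesz duality and Li--Yau.

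There is, however, one genuine gap: your ``linear'' $\ell$ is \emph{not adapted}. The path $s\mapsto v\big(1-s/(t\wedge\tau(x,1))\big)$ requires knowledge of $\tau(x,1)$ at every time $s<\tau(x,1)$, so it violates the adaptedness requirement in the definition of $\mathscr{P}_1(x,1,t,v)$, and Theorem~\ref{B1} is not applicable to it. Your hedge ``(essentially) linearly'' and the moment condition $\sup_x\E[(t\wedge\tau(x,1))^{-1}]<\infty$ are the right heuristics, but they do not by themselves produce an admissible $\ell$. The paper fills exactly this gap by invoking Thalmaier--Wang \cite{tw} (proof of Corollary~5.1 there): using only $\mathrm{Ric}\geq -C$, one obtains an \emph{adapted} $\ell\in\mathscr{P}_1(x,1,t,v)$ with $|\ell_s|\le|v|$ and $\big(\int\!\int_0^{t\wedge\tau(x,1)}|\dot\ell_s|^2\,\dd s\,\dd\IP\big)^{1/2}\le C_t|v|$, uniformly in $x$. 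With this input in place, the remainder of your argument goes through.

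A minor stylistic difference: the paper applies Li--Yau \emph{before} the duality step, bounding $\big(\int|\psi(\mathsf{b}_t(x))|^2\dd\IP\big)^{1/2}\le C_t\,\mu(B(x,\sqrt t))^{-1/2}\|\psi\|_2$ directly, so that Riesz--Fischer is carried out in the unweighted space $\Gamma_{L^2}(M,\Sigma M)$. Your weighted-$L^2$ Riesz representation followed by the sup-bound on $e^{-t\Delta}(x,\cdot)$ is equally valid but slightly more circuitous.
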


\begin{proof} Note that by $|R|\leq C$, the tensor $\underline{\widetilde{R}}$ as well as the Ricci and the scalar curvature are bounded. In the sequel, $C$ and $C_t$ denote constants whose actual value can change from line to line. Let $x\in M$, $\psi\in\Gamma_{C^{\infty}_c}(M,\Sigma M)$, $v\in (T^*M\otimes \Sigma M)_x$, $\ell \in \mathscr{P}_1(x,1,t,v)$ so that by the first Bismut derivative formula and $\mathrm{scal}\geq -C$ we have
\begin{align*}
&\left|\left(\widetilde{\nabla}P_t \psi(x), v\right)\right|\\
&\leq  C_t\int \left|\psi(\mathsf{b}_t(x))\right| \left|\int_0^{t\wedge \tau(x,1)} \mathrm{e}^{-\int^s_0\frac{1}{4}\mathrm{scal}(\mathsf{b}_u(x))\dd u} \dd \underline{\mathsf{b}}_s(x) \underline{Q}^*_s(x) \dot\ell_s  \right|\dd\IP\\
&+C_t\int \left|\psi(\mathsf{b}_t(x))\right|  \int_0^{t\wedge \tau(x,1)}  \mathrm{e}^{-\int^s_0\frac{1}{4}\mathrm{scal}(\mathsf{b}_u(x))\dd u} \left|  \transport_s^{x,-1}  \rho(\mathsf{b}_s(x))^* \transport^x_s \underline{Q}^*_s(x) \ell_s   \right|\dd s\dd\IP,
\end{align*} 
so that by Cauchy-Schwarz the latter is
\begin{align*}
&\leq  C_t\left(\int \left|\psi(\mathsf{b}_t(x))\right|^2 \dd\IP\right)^{1/2} \left(\int\left|\int_0^{t\wedge \tau(x,1)} \mathrm{e}^{-\int^s_0\frac{1}{4}\mathrm{scal}(\mathsf{b}_u(x))\dd u} \dd \underline{\mathsf{b}}_s(x) \underline{Q}^*_s(x) \dot\ell_s  \right|^2\dd\IP\right)^{1/2}\\
&+C_t\left(\int \left|\psi(\mathsf{b}_t(x))\right|^2 \dd\IP\right)^{1/2} \left(\int \left(\int_0^{t\wedge \tau(x,1)}  \mathrm{e}^{-\int^s_0\frac{1}{4}\mathrm{scal}(\mathsf{b}_u(x))\dd u}  \left| \transport_s^{x,-1}  \rho(\mathsf{b}_s(x))^* \transport^x_s \underline{Q}^*_s(x) \ell_s \right| \dd s\right)^2\dd\IP\right)^{1/2}\,.
\end{align*} 

We use $\mathrm{scal}\geq -C$, that by Gronwall's inequality 
$$
\left|\underline{Q}^*_s(x)\right|\leq C_t\quad \text{ for all $0\leq s\leq t$},
$$
and 
$$
|\rho(\mathsf{b}_s(x))^*|\leq  \max_{y\in B(x,1)}|\rho(y)| \quad \text{ for all $0\leq s\leq \tau(x,1)$}
$$
for the second integral, and the Burkholder-Davis-Gundy inequality to bound the first integral, so that the above is
\begin{align*}
&\leq  C_t\left(\int \left|\psi(\mathsf{b}_t(x))\right|^2 \dd\IP\right)^{1/2} \left(\int\int_0^{t\wedge \tau(x,1)} \mathrm{e}^{-\int^s_0\frac{1}{2}\mathrm{scal}(\mathsf{b}_u(x))\dd u}  \left|\underline{Q}^*_s(x)\right|^2 \left|\dot\ell_s  \right|^2\dd s\dd\IP\right)^{1/2}\\
&+C_t  \max_{y\in B(x,1)}|\rho(y)|\left(\int \left|\psi(\mathsf{b}_t(x))\right|^2 \dd\IP\right)^{1/2} \left(\int \left(\int_0^{t}  \left|\ell_s   \right|\dd s\right)^2\dd\IP\right)^{1/2},
\end{align*} 
which is
\begin{align*}
&\leq  C_t\left(\int \left|\psi(\mathsf{b}_t(x))\right|^2 \dd\IP\right)^{1/2} \left(\int\int_0^{t\wedge \tau(x,1)} \left|\dot\ell_s  \right|^2\dd s\dd\IP\right)^{1/2}\\
&+C_t\max_{y\in B(x,1)}|\rho(y)|\left(\int \left|\psi(\mathsf{b}_t(x))\right|^2 \dd\IP\right)^{1/2} \left(\int \left(\int_0^{t}  \left|\ell_s   \right|\dd s\right)^2\dd\IP\right)^{1/2}.
\end{align*}
Now, using a lower bound for the Ricci curvature, $\ell$ can be chosen (cf. the proof of Corollary 5.1 in \cite{tw}) such that 
\begin{align*}
|\ell|\leq |v|,\quad \left(\int\int_0^{t\wedge \tau(x,1)} \left|\dot\ell_s  \right|^2\dd s\dd\IP\right)^{1/2}\leq C_t|v|, 
\end{align*}
so that we arrive at 
\begin{align*}
&\left|\left(\widetilde{\nabla}P_t \psi(x), v\right)\right|\leq  C_t(1+\max_{y\in B(x,1)}|\rho(y)|)\left(\int \left|\psi(\mathsf{b}_t(x))\right|^2 \dd\IP\right)^{1/2} |v|\\
&=C_t(1+\max_{y\in B(x,1)}|\rho(\mathsf{b}_s(y))|)\left(\int_M \mathrm{e}^{-t\Delta}(x,y)|\psi(y)|^2\dd \mu(y)\right)^{1/2}|v|\\
&\leq \frac{C_t(1+\max_{y\in B(x,1)}|\rho(y)|)}{\sqrt{\mu(B(x,\sqrt{t}))}}\left(\int_M |\psi(y)|^2\dd \mu(y)\right)^{1/2}|v|=\frac{C_t(1+\max_{y\in B(x,1)}|\rho(\mathsf{b}_s(y))|)}{\sqrt{\mu(B(x,\sqrt{t}))}}\left\|\psi\right\|_2|v|.
\end{align*}
Using Riesz-Fischer's duality theorem this estimate implies
$$
\int_M\left|[\widetilde{\nabla}P_t](x,y)\right|^2 \dd \mu(y)\leq \frac{C_t(1+\max_{y\in B(x,1)}|\rho(y)|)^2}{\mu(B(x,\sqrt{t}))},
$$
so that
\begin{align*}
&\int_M\int_M\left|[A\widetilde{\nabla}P_t](x,y)\right|^2 \dd \mu(y) \dd\mu(x)=\int_M\int_M\left|A(x)[\widetilde{\nabla}P_t](x,y)\right|^2 \dd \mu(y) \dd \mu(x)\\
&\leq C_t \int_M\frac{(1+\max_{y\in B(x,1)}|\rho(y)|)^2|A(x)|^2}{\mu(B(x,\sqrt{t}))} \dd \mu(x),
\end{align*}
which completes the proof.
\end{proof}

\begin{corollary}\label{aappq2} Assume $M$ is geodesically complete with $\mathrm{scal}\geq -C$ for some $C>0$ and let $t>0$. Then for every  metric vector bundle $E$ over $M$, every
$$
A\in \Gamma_{L^1_\loc}\big(M,\mathrm{Hom}( \Sigma M,E)\big)
$$
which satisfies  
$$
\int_M\frac{|A(x)|^2}{\mu(B(x,\sqrt{t}))} \dd \mu(x)<\infty,\quad\text{ one has $\>ADP_t\in \ILL^2\Big(\Gamma_{L^2}(M,\Sigma M), \Gamma_{L^2}(M,E)\Big)$}.
$$
\end{corollary}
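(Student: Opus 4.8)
The plan is to run the argument of Corollary~\ref{aappq1} almost verbatim, now starting from the second Bismut derivative formula (Theorem~\ref{B2}) instead of the first. The crucial simplification is that the right-hand side of Theorem~\ref{B2} contains neither the $\rho$-term nor the $\underline Q$-process, only the stochastic integral $\int_0^{t\wedge\tau(x,1)}\dd\underline{\mathsf{b}}_s(x)\cdot\dot\ell_s$; this is precisely why the weight appearing in the hypothesis is just $|A(x)|^2/\mu(B(x,\sqrt t))$, with no curvature factor.

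First I would fix $x\in M$, $\psi\in\Gamma_{C^{\infty}_c}(M,\Sigma M)$, $\zeta\in(\Sigma M)_x$ and a process $\ell\in\mathscr{P}_2(x,1,t,\zeta)$. Applying Theorem~\ref{B2} with $r=1$, bounding the exponential weight by a constant $C_t$ via $\mathrm{scal}\geq -C$, and then applying Cauchy--Schwarz on $(\Omega,\IP)$ gives
\[
\left|\left(DP_t\psi(x),\zeta\right)\right|\leq C_t\left(\int\left|\psi(\mathsf{b}_t(x))\right|^2\dd\IP\right)^{1/2}\left(\int\left|\int_0^{t\wedge\tau(x,1)}\dd\underline{\mathsf{b}}_s(x)\cdot\dot\ell_s\right|^2\dd\IP\right)^{1/2}.
\]
Since Clifford multiplication by a covector $\xi$ acts as an endomorphism of operator norm $|\xi|$ and $\underline{\mathsf{b}}(x)$ is a Euclidean Brownian motion, the Burkholder--Davis--Gundy inequality (cf.~\eqref{bdg}) bounds the second factor by $C_t(\int\int_0^{t\wedge\tau(x,1)}|\dot\ell_s|^2\dd s\dd\IP)^{1/2}$. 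As in the proof of Corollary~\ref{aappq1}, one can then choose $\ell$ (following the construction used for Corollary~5.1 in \cite{tw}) so that $|\ell|\leq|\zeta|$ and $(\int\int_0^{t\wedge\tau(x,1)}|\dot\ell_s|^2\dd s\dd\IP)^{1/2}\leq C_t|\zeta|$. Combining this with the covariant Feynman--Kac formula and the Li--Yau estimate $\mathrm{e}^{-t\Delta}(x,y)\leq C_t/\mu(B(x,\sqrt t))$ yields the pointwise bound
\[
\left|\left(DP_t\psi(x),\zeta\right)\right|\leq\frac{C_t}{\sqrt{\mu(B(x,\sqrt t))}}\,\left\|\psi\right\|_2\,|\zeta|.
\]

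Then, exactly as in Corollary~\ref{aappq1}, the Riesz--Fischer duality theorem converts this into the kernel estimate $\int_M|[DP_t](x,y)|^2\dd\mu(y)\leq C_t/\mu(B(x,\sqrt t))$, whence
\[
\int_M\int_M\left|[ADP_t](x,y)\right|^2\dd\mu(y)\dd\mu(x)\leq C_t\int_M\frac{|A(x)|^2}{\mu(B(x,\sqrt t))}\dd\mu(x)<\infty,
\]
and $ADP_t$ is Hilbert--Schmidt by the integral-kernel criterion recorded just before Corollary~\ref{aappq0}. I expect the only genuinely nontrivial point to be the construction of the driving process $\ell$ with the two displayed bounds; this is, however, identical to the point already addressed in Corollary~\ref{aappq1}, and the remainder is a routine transcription of that computation with the $\rho$-contribution simply absent.
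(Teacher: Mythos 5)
Your proposal reproduces the paper's proof essentially verbatim: the second Bismut derivative formula (Theorem~\ref{B2}) with $r=1$, bounding the exponential via the scalar curvature lower bound, Cauchy--Schwarz and Burkholder--Davis--Gundy, the same choice of $\ell$ from the construction in \cite{tw}, and then the Li--Yau estimate together with the Hilbert--Schmidt kernel criterion exactly as in Corollary~\ref{aappq1}. The argument is correct and matches the paper's approach, with your remark about the operator norm of Clifford multiplication in the BDG step being a harmless small elaboration of what the paper leaves implicit.
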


\begin{proof} Let $x\in M$, $\psi\in\Gamma_{C^{\infty}_c}(M,\Sigma M)$, $\zeta\in (\Sigma M)_x$, $\ell \in \mathscr{P}_2(x,1,t,\zeta)$ so that by the second Bismut derivative formula, $\mathrm{scal}\geq -C$, Cauchy-Schwarz and Burgholder-Davis-Gundy we have
\begin{align*}
\left|\left(DP_t \psi(x), \zeta\right)\right|\leq C_t\left(\int \left|\psi(\mathsf{b}_t(x))\right|^2 \dd\IP\right)^{1/2} \left(\int \int_0^{t\wedge \tau(x,1)}   \left|  \dot{\ell}_s \right|^2 \dd s\dd\IP\right)^{1/2}.
\end{align*} 
Now we choose $\ell$ such that (cf. again the proof of Corollary 5.1 in \cite{tw})
\begin{align*}
|\ell|\leq |\zeta|,\quad \left(\int\int_0^{t\wedge \tau(x,1)} \left|\dot\ell_s  \right|^2\dd s\dd\IP\right)^{1/2}\leq C_t|\zeta|, 
\end{align*}
so that we arrive at 
\begin{align*}
\left|\left(DP_t \psi(x), \zeta\right)\right|\leq C_t\left(\int \left|\psi(\mathsf{b}_t(x))\right|^2 \dd\IP\right)^{1/2} |\zeta|.
\end{align*}
From here on one can copy the proof of Corollary \ref{aappq1}.
\end{proof}

\section{Main Result}\label{mainres}

Assume $g$ and $h$ are geodesically complete Riemannian metrics on $M$ and denote by $\mathcal Q_j$ the nonnegative closed sesquilinear form corresponding to $D^2_j$, i.e., $\mathcal Q_j(\psi) = \langle D_j^2\psi,\psi\rangle=\|D_j\psi\|^2$ with $\Dom(\mathcal Q_j)=\Dom(D_j)=\Dom(\sqrt{D_j^2})$.

\begin{lemma}\label{lem:quadratic-form-domains}
		If $g\sim h$ are geodesically complete Riemannian metrics on $M$ with bounded scalar curvatures and such that the function $\omega_{g,h}$ is bounded, then
		\[
			I_{g,h}\Dom(\mathcal Q_g) = \Dom(\mathcal Q_h)\,.
		\]
\end{lemma}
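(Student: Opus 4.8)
The plan is to recast the claim as an equality of operator domains on single Hilbert spaces and reduce it to a graph-norm estimate. Since $D_j$ is essentially self-adjoint on $\Gamma_{C^{\infty}_c}(M,\Sigma_jM)$, one has $\Dom(\mathcal Q_j)=\Dom(D_j)=\Dom(\sqrt{D_j^2})$, so the assertion is equivalent to $I_{g,h}\Dom(D_g)=\Dom(D_h)$. I would prove the two inclusions separately via the transformed Dirac operators: the inclusion $\Dom(D_g)\subseteq\Dom(D_{h,g})$ is exactly $I_{g,h}\Dom(D_g)\subseteq\Dom(D_h)$ since $\Dom(D_{h,g})=I_{g,h}^{-1}\Dom(D_h)$, and $\Dom(D_h)\subseteq\Dom(D_{g,h})$ is exactly $\Dom(D_h)\subseteq I_{g,h}\Dom(D_g)$ since $\Dom(D_{g,h})=I_{h,g}^{-1}\Dom(D_g)=I_{g,h}\Dom(D_g)$ by \eqref{eqn:I-inverse-adjoint}. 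The two inclusions have the same structure, so I describe only the first.

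The key point is that $\Gamma_{C^{\infty}_c}(M,\Sigma_gM)$ is simultaneously a core for $D_g$ and for $D_{h,g}$: the latter holds because $\beta^g_h$ is a fibrewise unitary isomorphism of smooth bundles, so $I_{g,h}$ restricts to a bijection of $\Gamma_{C^{\infty}_c}(M,\Sigma_gM)$ onto $\Gamma_{C^{\infty}_c}(M,\Sigma_hM)$, and conjugating a closed operator by a bounded invertible operator carries cores to cores. On $\Gamma_{C^{\infty}_c}(M,\Sigma_gM)$ the operator $D_{h,g}$ is the differential operator \eqref{eqn:definition-D_h,g}, which by Section~\ref{hpwform} decomposes as $D_{h,g}=L_{h,g}\widetilde{\nabla}^g+M_{h,g}$ with $L_{h,g}$ a bounded fibrewise homomorphism field (by quasi-isometry) and $M_{h,g}$ a bounded bundle endomorphism (by Corollary~\ref{cor:M-bounds} together with the standing hypothesis that $\omega_{g,h}$ is bounded). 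Hence $\|D_{h,g}\varphi\|\le C(\|\widetilde{\nabla}^g\varphi\|+\|\varphi\|)$, and the Lichnerowicz formula \eqref{eqn:Lichnerowicz} yields, for $\varphi\in\Gamma_{C^{\infty}_c}(M,\Sigma_gM)$,
\[
\|\widetilde{\nabla}^g\varphi\|^2=\|D_g\varphi\|^2-\tfrac14\langle \mathrm{scal}_g\,\varphi,\varphi\rangle\le\|D_g\varphi\|^2+C\|\varphi\|^2\,,
\]
where I used that $\mathrm{scal}_g$ is bounded below. Combining these, the $D_{h,g}$-graph norm is dominated by the $D_g$-graph norm on $\Gamma_{C^{\infty}_c}(M,\Sigma_gM)$.

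The inclusion $\Dom(D_g)\subseteq\Dom(D_{h,g})$ then follows by a routine approximation: for $\varphi\in\Dom(D_g)$, choose $\varphi_k\in\Gamma_{C^{\infty}_c}(M,\Sigma_gM)$ converging to $\varphi$ in the $D_g$-graph norm; by the domination $(\varphi_k)$ is $D_{h,g}$-graph Cauchy, hence converges in that norm, necessarily to $\varphi$, so $\varphi\in\Dom(D_{h,g})$. Exchanging the roles of $g$ and $h$ — noting that $M_{g,h}$ is again bounded by $\omega_{g,h}$ through Corollary~\ref{cor:M-bounds} and that $\mathrm{scal}_h$ is bounded below — gives $\Dom(D_h)\subseteq\Dom(D_{g,h})=I_{g,h}\Dom(D_g)$. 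Together with the first inclusion this proves $I_{g,h}\Dom(\mathcal Q_g)=\Dom(\mathcal Q_h)$.

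I expect the only genuinely delicate point to be the core bookkeeping in the second paragraph: namely that the abstractly defined operator $D_{h,g}=I_{g,h}^{-1}D_hI_{g,h}$ coincides with the graph closure of the differential operator \eqref{eqn:definition-D_h,g} restricted to $\Gamma_{C^{\infty}_c}(M,\Sigma_gM)$, which hinges on $I_{g,h}$ preserving smooth compactly supported spinors and on the essential self-adjointness of $D_h$. The remaining ingredients — the quasi-isometry bound on $L_{h,g}$, Corollary~\ref{cor:M-bounds}, and the Bochner identity — are entirely routine.
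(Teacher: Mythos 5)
Your proof is correct and rests on the same ingredients as the paper's: the Bourguignon--Gauduchon decomposition of the transformed Dirac operator into a skewed covariant derivative plus a zeroth-order term controlled by $\omega_{g,h}$ (via Lemma~\ref{lem:tilde-T-bounds} / Corollary~\ref{cor:M-bounds}), the Lichnerowicz formula to dominate $\|\widetilde{\nabla}^g\varphi\|$ by the $D_g$-graph norm using boundedness of $\mathrm{scal}_g$, and symmetry in $g,h$ for the reverse inclusion. The only organizational difference is that the paper starts from $\|D_hI\psi\|^2$ and applies Lichnerowicz twice (once for $D_h^2$ to pass to $\widetilde{\nabla}^h I\psi$, once for $D_g^2$), whereas you work directly with $D_{h,g}=L_{h,g}\widetilde{\nabla}^g+M_{h,g}$ and invoke Lichnerowicz only for $D_g^2$, which is marginally more economical but not a different method.
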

\begin{proof}Throughout the proof, $C$ is a positive constant whose value might change from line to line, but whose existence is guaranteed by the assumptions of the lemma.
	
	The space $\Dom(\mathcal Q_j)$ is the closure of $\Gamma_{C^\infty_c}(M,\Sigma_jM)$ with respect to the graph norm
	\[
		\psi \mapsto \left(\|\psi\|^2 + \|D_j\psi\|^2\right)^{1/2}\,.
	\]
	By the Lichnerowicz formula \eqref{eqn:Lichnerowicz} we have for $\psi\in\Gamma_{C^\infty_c}(M,\Sigma_gM)$
	\begin{align*}
		\|D_hI\psi\|^2 &= \left\langle D_h^2I\psi,I\psi \right\rangle=  \left\langle \left({\widetilde{\nabla}}^{h*}\widetilde{\nabla}^h+\tfrac{1}{4}\mathrm{scal}_h\right)I\psi,I\psi \right\rangle \\
		&=\int_{M}\!|\widetilde{\nabla}^h\beta^g_h\psi(x)|^2\dd\mu_h(x) + \frac 14 \int_{M}\!(\textrm{scal}_h(x)\beta^g_h\psi(x),\beta^g_h\psi(x))\dd\mu_h(x)\,.
	\end{align*}
	Since we assumed quasi-isometry of the metrics and boundedness of the scalar curvatures, the second term is bounded by $C\|\psi\|^2$.
	
	By Remark~\ref{rem:skewed-connection}(ii), Lemma~\ref{lem:tilde-T-bounds} and the assumption on $\omega_{g,h}$ we can bound the first term as follows,
	\begin{multline*}
		\int_{M}\!|\widetilde{\nabla}^h\beta^g_h\psi(x)|^2\dd\mu_h(x) =
		\int_{M}\!|\beta^g_h\beta^h_g\widetilde{\nabla}^h\beta^g_h\psi(x)|^2\dd\mu_h(x) = \int_{M}\!|{}^g\widetilde{\nabla}^h\psi(x)|^2\dd\mu_h(x) = \int_{M}\!|(\widetilde{\nabla}^g+\tfrac 14\widetilde{T}_{h,g})\psi(x)|^2\dd\mu_h(x)\\
		\leq \int_{M}\!\left(|\widetilde{\nabla}^g\psi(x)|+\tfrac 14|\widetilde{T}_{h,g}\psi(x)|\right)^2\varrho_{g,h}(x)\dd\mu_g(x)\leq C \left(\|\widetilde{\nabla}^g\psi\|^2 + \|\omega_{g,h}\|^2_\infty\|\psi\|^2 + \|\widetilde{\nabla}^g\psi\|\|\omega_{h,g}\|_\infty\|\psi\|\right)\\
		\leq C \left(\|\widetilde{\nabla}^g\psi\|^2 + \|\psi\|^2 + \|\widetilde{\nabla}^g\psi\|\|\psi\|\right)
	\end{multline*}
We use the Lichnerowicz formula once more to obtain
	\begin{align*}
		\|\widetilde{\nabla}^g\psi\|^2 &= \left\langle \widetilde{\nabla}^{g*}\widetilde{\nabla}^g\psi,\psi\right\rangle = \left\langle (\widetilde{\nabla}^{g*}\widetilde{\nabla}^g+\tfrac14\textrm{scal}_g)\psi,\psi\right\rangle - \tfrac14 \left\langle \textrm{scal}_g\psi,\psi \right\rangle\leq (D_g^2\psi,\psi) + C\left\langle \psi,\psi \right\rangle\\
		&\leq C (\|D_g\psi\|^2+\|\psi\|^2)\,.
	\end{align*}
Putting everything together, we have
	\begin{align*}
	\|I\psi\|^2+\|D_hI\psi\|^2 \leq C \Big(\|D_g\psi\|^2 + \|\psi_g\|^2 + \sqrt{\|D_g\psi\|^2 + \|\psi_g\|^2}\cdot\|\psi\|\Big)\,,
	\end{align*}
	which implies
	\begin{align}\label{eqn:lem:quadratic-form-domains-0}
		I\Dom(\mathcal Q_g)\subseteq \Dom(\mathcal Q_h)\,.
	\end{align}
	Because of $I^{-1}=I_{g,h}^{-1}=I_{h,g}$ and since our arguments are symmetric in $g$ and $h$, we have equality in \eqref{eqn:lem:quadratic-form-domains-0}.
\end{proof}

Assuming $g$ and $h$ are geodesically complete Riemannian metrics on $M$, set
\begin{align*}
\Psi^{(1)}_{g,h}(x)&:=\max\big(\delta_{g,h}(x)^2,\omega_{g,h}(x)^2, \delta_{g,h}(x)\Psi_g(x)\big)\,,\\
\Psi^{(2)}_{g,h}(x)&:=\max\big(\omega_{g,h}(x), \delta_{g,h}(x)\Psi_g(x),\delta_{g,h}(x)\Psi_h(x)\big)\,,\\
\end{align*}
where
\begin{align*}
&\Psi_g:M\longrightarrow \IR, \quad \Psi_g(x):=\big(1+\max_{y\in B_g(x,1)}|\nabla^gR_g(y)|\big)^2,\\
&\Psi_h:M\longrightarrow \IR, \quad \Psi_h(x):=\big(1+\max_{y\in B_h(x,1)}|\nabla^hR_h(y)|\big)^2.
\end{align*}

Now we can prove our main result, refering the reader to section \ref{saab} for the basic notions and notations from scattering theory used here:

\begin{theorem}\label{main} Assume $g\sim h$ are geodesically complete Riemannian metrics on $M$ such that there exists a constant $C<\infty$ with 
$$
|\omega_{g,h}|+|R_g|+|R_h|\leq C\,.
$$
Then the following results hold true:\\
a) If for some $t>0$ and some (and then by $g\sim h$ both) $j\in \{g,h\}$ one has
\[
	\int_M \frac{\Psi^{(1)}_{g,h}(x)}{\mu_j(B_j(x,\sqrt{t}))} \dd \mu_j(x)<\infty\,,
\] 
then the wave-operators $\mathscr{W}_\pm(D_h,D_g,I_{g,h})$ exist and are complete. Moreover, the $\mathscr{W}_\pm(D_h,D_g,I_{g,h})$ are partial isometries with initial space $\Gamma_{L^2}(M,\Sigma_gM)^{\mathrm{ac}}(D_g) $ and final space $\Gamma_{L^2}(M,\Sigma_hM)^{\mathrm{ac}}(D_h) $. In particular, we have $\spec_{\mathrm{ac}}(D_g)=\spec_{\mathrm{ac}}(D_h)$.\\
b) If for some $t>0$ and some (and then by $g\sim h$ both) $j\in \{g,h\}$ one has
\[
	\int_M \frac{\Psi^{(2)}_{g,h}(x)}{\mu_j(B_j(x,\sqrt{t}))} \dd \mu_j(x)<\infty\,,
\] 
then the wave-operators $\mathscr{W}_\pm(D^2_h,D^2_g,I_{g,h})$ exist and are complete. Moreover, the $\mathscr{W}_\pm(D^2_h,D^2_g,I_{g,h})$ are partial isometries with initial space $\Gamma_{L^2}(M,\Sigma_gM)^{\mathrm{ac}}(D_g^2) $ and final space $\Gamma_{L^2}(M,\Sigma_h M)^{\mathrm{ac}}(D_h^2)$. In particular, we have $\spec_{\mathrm{ac}}(D^2_g)=\spec_{\mathrm{ac}}(D^2_h)$.
\end{theorem}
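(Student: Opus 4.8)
The plan is to derive both statements from the Belopol'skii--Birman theorem in the form recalled in Section~\ref{saab}. Take part~a) first, applied to the pair $(D_h,D_g)$ with identification $I_{g,h}$: two hypotheses must be checked. The first is that $I_{g,h}$ maps the form domain of $D_g$ onto that of $D_h$, i.e.\ $I_{g,h}\Dom(\mathcal Q_g)=\Dom(\mathcal Q_h)$, which is exactly Lemma~\ref{lem:quadratic-form-domains} --- applicable since the standing assumption $|\omega_{g,h}|+|R_g|+|R_h|\le C$ forces the scalar curvatures of $g$ and $h$ to be bounded. The second is that for some $t>0$ the operator $D_hP^h_tI_{g,h}P^g_t-P^h_tI_{g,h}P^g_tD_g$ is of trace class; this single condition, on passing to adjoints, also produces the reversed wave operators and hence completeness, and with it the asserted partial-isometry structure and equalities of absolutely continuous spectra. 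For part~b), applied to $(D^2_h,D^2_g)$, the situation is the same with $D_j$ replaced by $D^2_j$: the first hypothesis is again Lemma~\ref{lem:quadratic-form-domains} (the form domain of $D^2_j$ being $\Dom(\mathcal Q_j)$), and the second is trace-class membership of $D^2_hP^h_tI_{g,h}P^g_t-P^h_tI_{g,h}P^g_tD^2_g$ for some $t>0$.

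To verify the trace-class conditions I would first run these operators through Theorem~\ref{thm:dirac-hpw-formula2} (for part~a), with $s=t$) and Theorem~\ref{thm:dirac-hpw-formula} (for part~b)): the weak identities proved there, together with boundedness of both sides, identify the operators above with the \emph{explicit} bounded operators $\mathscr R_{g,h,t}$ and $\mathscr T_{g,h,t}$ displayed in those theorems. Each of these is a finite sum of terms of the shape $(A_hB_hP^h_t)^*\,\mathcal C\,(A_gB_gP^g_t)$ --- the one exception being the last summand of $\mathscr T_{g,h,t}$, which carries $P^g_{t/2}D_g^2P^g_{t/2}=D_g^2P^g_t=(D_gP^g_{t/2})(D_gP^g_{t/2})$ and so, after absorbing the bounded factor $D_gP^g_{t/2}$ on the right, has the same shape with $B_g=D_g$. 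Here $\mathcal C$ is bounded because $g\sim h$; $B_j$ is one of $\mathrm{Id}$, $\widetilde\nabla^j$, $D_j$; and $A_j$ is a bundle-multiplication operator whose fibrewise norm is, by Lemma~\ref{lem:S-Shat-pointwise-estimates}, Lemma~\ref{lem:tilde-T-bounds}, Corollary~\ref{cor:M-bounds} and Proposition~\ref{prop:nabla-difference-bound}, bounded up to a dimensional and quasi-isometry constant by one of $\delta_{g,h}^{1/2}$, $\delta_{g,h}$, $\omega_{g,h}^{1/2}$, $\omega_{g,h}$.

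Next I would prove that each factor $A_jB_jP^j_t$ is Hilbert--Schmidt: for $B_j=\mathrm{Id}$ this is Corollary~\ref{aappq0}, for $B_j=D_j$ it is Corollary~\ref{aappq2} --- both requiring $\int_M|A_j(x)|^2\,\mu_j(B_j(x,\sqrt t))^{-1}\dd\mu_j<\infty$ --- and for $B_j=\widetilde\nabla^j$ it is Corollary~\ref{aappq1}, requiring $\int_M\big(1+\max_{y\in B_j(x,1)}|\rho_j(y)|\big)^2|A_j(x)|^2\,\mu_j(B_j(x,\sqrt t))^{-1}\dd\mu_j<\infty$; in this last case one uses that $\rho_j$ is assembled algebraically from $\mathrm{grad}(\mathrm{scal}_j)$ and $\widetilde\nabla^j\widetilde R_j$, both dominated pointwise by $|\nabla^jR_j|$, whence $\big(1+\max_{y\in B_j(x,1)}|\rho_j(y)|\big)^2\le\mathrm{const}\cdot\Psi_j(x)$. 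Collecting the weights arising in the decompositions and using $\Psi_j\ge1$, one checks that each integrand so produced is $\le\mathrm{const}\cdot\Psi^{(1)}_{g,h}(x)\,\mu_j(B_j(x,\sqrt t))^{-1}$ in case~a) and $\le\mathrm{const}\cdot\Psi^{(2)}_{g,h}(x)\,\mu_j(B_j(x,\sqrt t))^{-1}$ in case~b), so the integrals are finite by hypothesis --- the hypothesis for one $j\in\{g,h\}$ and one $t>0$ yielding it for the other $j$ and all $t>0$ by quasi-isometry together with the local volume doubling furnished by $\mathrm{Ric}_j\ge-C$. Since the product of two Hilbert--Schmidt operators, pre- and post-composed with bounded operators, is of trace class (Remark~\ref{abst}), each summand, and therefore the finite sum $\mathscr R_{g,h,t}$ (resp.\ $\mathscr T_{g,h,t}$), is of trace class, which is what the first paragraph reduced the theorem to.

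The conceptual work sits in the decomposition Theorems~\ref{thm:dirac-hpw-formula} and~\ref{thm:dirac-hpw-formula2} and in the Bismut derivative formulae behind Corollaries~\ref{aappq0}--\ref{aappq2}; what remains for the present theorem is essentially bookkeeping, and that is where the only real difficulty lies. One must check that the \emph{whole} family of weights $|A_j|^2$ produced by the two decompositions --- each multiplied, for a $\widetilde\nabla^j$-factor, by the extra $\Psi_j$ --- is simultaneously dominated by the \emph{single} weight $\Psi^{(1)}_{g,h}$ (resp.\ $\Psi^{(2)}_{g,h}$), which demands care about which summand carries $\delta_{g,h}$ versus $\delta_{g,h}^{1/2}$, $\omega_{g,h}$ versus $\omega_{g,h}^{1/2}$, and which one picks up $\Psi_g$ versus $\Psi_h$; and one needs the comparison of the Brownian-ball volumes $\mu_g(B_g(x,\sqrt t))$ and $\mu_h(B_h(x,\sqrt t))$ uniformly in $x$.
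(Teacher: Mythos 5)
Your overall strategy is the one the paper follows, but you have omitted one of the three hypotheses of the Belopol'skii--Birman theorem as it is stated in Appendix~\ref{saab}. Besides the domain-mapping condition $I\dom(\sqrt{H_1})=\dom(\sqrt{H_2})$ (respectively $I\dom(H_1)=\dom(H_2)$) and the existence of a trace-class $\mathscr T$ (respectively $\mathscr R$), both parts of Theorem~\ref{belo} require that $(I^*I-1)\exp(-sH_1)$ (respectively $(I^*I-1)\exp(-sH_1^2)$) be Hilbert--Schmidt, or at least compact, for some $s>0$. In both applications of your argument this comes down to the same operator $(I_{g,h}^*I_{g,h}-1)\exp(-tD_g^2)$, and it is a genuinely separate condition: it is what controls the behaviour of the identification operator on the absolutely continuous subspaces and underlies the partial-isometry conclusion, and it does not follow from the other two hypotheses. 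You never mention it (``two hypotheses must be checked''), so the application of Theorem~\ref{belo} is incomplete as written.

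The verification is, however, within easy reach of the tools you already invoke, and the paper carries it out: $I_{g,h}^*I_{g,h}-1$ is multiplication by $\varrho_{g,h}-1$, which one rewrites through the fibrewise factorisation built from $U_{g,h}$ and $S_{g,h;g}$ in Section~\ref{hpwform}; then Lemma~\ref{lem:S-Shat-pointwise-estimates} gives the pointwise bound $|S_{g,h}|\leq\delta_{g,h}$, so Corollary~\ref{aappq0} applies because $\delta_{g,h}^2\leq\Psi^{(1)}_{g,h}$ and, using $\Psi_j\geq 1$ and boundedness of $\delta_{g,h}$ under $g\sim h$, also $\delta_{g,h}^2\lesssim\Psi^{(2)}_{g,h}$. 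Once this piece is added, the remainder of your argument --- identifying the commutator-type operators with $\mathscr R_{g,h,t}$ and $\mathscr T_{g,h,t}$ via Theorems~\ref{thm:dirac-hpw-formula2} and~\ref{thm:dirac-hpw-formula}, splitting each summand as a product of two Hilbert--Schmidt factors via Corollaries~\ref{aappq0}--\ref{aappq2}, dominating $1+\max_{B_j(\cdot,1)}|\rho_j|$ by $\Psi_j^{1/2}$ using that $\rho_j$ is built from $\nabla^j R_j$, and the weight bookkeeping against $\Psi^{(1)}_{g,h}$ and $\Psi^{(2)}_{g,h}$, together with Lemma~\ref{lem:quadratic-form-domains} for the domain condition --- tracks the paper's proof and is sound.
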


\begin{proof} We will use the Belopol'skii-Birman-Theorem (cf.\ Appendix \ref{saab}). We are only going to prove b), noting that the proof of a) is similar and in fact easier, using the second part of the Belopol'skii-Birman-Theorem.\\
Note that for all $x\in M$ one has
\begin{align*}
&\max_{y\in B_g(x,1)}|\rho_g(y)|\leq C'''\max_{y\in B_g(x,1)}|\nabla^gR_g(y)|,\\
& \max_{y\in B_h(x,1)}|\rho_h(y)|\leq C'''\max_{y\in B_h(x,1)}|\nabla^hR_h(y)|.
\end{align*}
Now Corollary \ref{aappq0}, Corollary \ref{aappq1}, Corollary \ref{aappq2}, Remark \ref{abst}, Corollary \ref{cor:M-bounds}, and Lemma \ref{lem:S-Shat-pointwise-estimates} imply that the operator $\mathscr{T}_{g,h,t}$ from the Dirac HPW-formula is trace class. Moreover, we have
$$
(I^*I-1)\exp(-tD_g^2)=I^{-1}U_{g,h}S_{g,h;g}S_{g,h;g}\exp(-tD_g^2),
$$
which by Corollary \ref{aappq0}, Remark \ref{abst} and Lemma \ref{lem:S-Shat-pointwise-estimates} is Hilbert-Schmidt. These facts together with Lemma \ref{lem:quadratic-form-domains} show that the assumptions of the first part of the Belopol'skii-Birman-Theorem are satisfied.
\end{proof}

\begin{example}
Let $(M^n,g)=(\mathbb{H}^n,\textrm{hyp})$ denote the $n$-dimensional hyperbolic space with constant sectional curvature $-1$. Then, being retractable, $\mathbb{H}^n$ carries precisely one spin structure. The spectrum of $D_g^2$ is given by
\[
\spec_{\mathrm{ac}}(D_g^2) = [0,\infty),\quad \spec_{\mathrm{sc}}(D_g^2) = \emptyset, \quad \spec_{\mathrm{pp}}(D^2) = \emptyset\,,
\] see \cite[Corollary~4.6]{bunke} and also \cite[p. 441 \& p. 456]{baer} for a minor correction. The spectrum of $D_g$ is symmetric about $0$ (this is true on any Riemannian spin manifold of dimension $n\not\equiv 3\pmod 4$, and can be deduced in the above situation from the symmetric space structure of $(\mathbb{H}^n,\textrm{hyp})$ for all $n$) which implies by the spectral mapping theorem that
\[
\spec(D_g) = \R\,.
\]
Regarding the individual parts of the spectrum, we can easily exclude that $D_g$ has eigenvalues because $D_g^2$ has none, i.e., 
\[
\spec_{\mathrm{pp}}(D_g) = \emptyset\,.
\]
Next, by working directly with the definition of spectral measures and absolute continuity, it is easy to show that\footnote{This would still hold true if we replaced the function $x\mapsto x^2$ by any measurable function $f:\R\to \R$ for which $f(N)$ and $f^{-1}(N)$ have Lebesgue-measure zero as soon as $N\subseteq \R$ itself has Lebesgue-measure zero.  A class of such functions is, e.g., the set of all $f\in C^1(\R)$ with $ \{ x \,|\, f'(x)=0 \} $ discrete.}
\[
\Gamma_{L^2}(M,\Sigma_gM)^{\mathrm{ac}}(D_g) = \Gamma_{L^2}(M,\Sigma_gM)^{\mathrm{ac}}(D_g^2)\,.
\]
From the above, we already know that $\Gamma_{L^2}(M,\Sigma_gM)^{\mathrm{ac}}(D_g^2) = \Gamma_{L^2}(M,\Sigma_gM)$ which implies that
\[
\spec_{\mathrm{ac}}(D_g) = \R \quad\text{ and }\quad \spec_{\mathrm{sc}}(D_g) = \emptyset\,.
\]

Because $(\mathbb{H}^n,\textrm{hyp})$ is homogeneous, the volume $\mu_g(B(x,\sqrt{t}))$ does not depend on the point $x$. Now let $h$ be any Riemannian metric on $\mathbb{H}^n$ with $g\sim h$ and such that there exists $C<\infty$ with
\[
|\omega_{g,h}|	+ |R_h| \leq C\,.
\]
Then we have:\\
a) If
\[
\int_M \Psi^{(1)}_{g,h}(x)\dd \mu_g(x)<\infty\,,
\] then the wave-operators $\mathscr W_\pm(D_h,D_g,I_{g,h})$ exist and are complete. Moreover, the $\mathscr W_\pm(D_h,D_g,I_{g,h})$ are partial isometries with initial space $\Gamma_{L^2}(M,\Sigma_gM)^{\mathrm{ac}}(D_g)$ and final space $\Gamma_{L^2}(M,\Sigma_h M)^{\mathrm{ac}}(D_h)$. In particular, we have $\spec_{\mathrm{ac}}(D_g)=\spec_{\mathrm{ac}}(D_h)=\R$.\\
b) If
\[
\int_M \Psi^{(2)}_{g,h}(x)\dd \mu_g(x)<\infty\,,
\] then the wave-operators $\mathscr W_\pm(D^2_h,D^2_g,I_{g,h})$ exist and are complete. Moreover, the $\mathscr W_\pm(D^2_h,D^2_g,I_{g,h})$ are partial isometries with initial space $\Gamma_{L^2}(M,\Sigma_gM)^{\mathrm{ac}}(D^2_g)$ and final space $\Gamma_{L^2}(M,\Sigma_hM)^{\mathrm{ac}}(D_h^2)$. In particular, we have $\spec_{\mathrm{ac}}(D^2_h)=\spec_{\mathrm{ac}}(D^2_g)=[0,\infty)$.
\end{example}

\section{Application to Ricci Flow}\label{ricci}

Applying our main result to $g$ and $h$ running through a Ricci flow, we obtain the below result on the stability of the absolutely continuous spectrum, in which we adjust the notation slightly for convenience by indexing any quantity associated with a Riemannian metric $g_s$ simply by the family parameter $s$.
\begin{theorem}\label{thm:ricci-flow}
	Let $S>0$, $\kappa \in\R$ and let $(g_s)_{s\in[0,S]}$ be a smooth family of Riemannian metrics on $M$. Assume
	\begin{itemize}
		\item[(i)] $g_0$ is geodesically complete with $|R_0|_0\leq C <\infty$;
		\item[(ii)] $(g_s)_{s\in[0,S]}$ evolves under a Ricci type flow
		\[
			\frac{\partial}{\partial s}g_s = \kappa \mathrm{Ric}_s \text{ for all } s\in[0,S]\,;
		\]
		\item[(iii)] there exist positive constants $C_0,C_1$ such that
		\begin{align*}
			|R_s|_s \leq C_0 \quad \text{ and } \quad |\nabla^sR_s|_s\leq C_1/s \quad \text{ for all } s\in (0,S]\,.
		\end{align*}
	\end{itemize}

	Setting, for $s_0\in(0,S)$ and $x\in M$,
	\begin{align*}
			\mathcal A_{s_0}(x) &:= \sup\left\{|\mathrm{Ric}_s(v,v)|\,:\, v\in T_xM, |v|_s\leq 1, s\in[s_0,S] \right\}\,,\\
				\mathcal B_{s_0}(x) &:= \sup\{|\nabla^s_v\mathrm{Ric}_s(u,w) + \nabla^s_u\mathrm{Ric}_s(v,w)-\nabla^s_w\mathrm{Ric}_s(u,v)| \,:\, u,v,w\in T_xM,\\ &\mkern72mu|u|_s,|v|_s,|w|_s\leq 1, s\in[s_0,S] \}\,,
	\end{align*}
	one has:\\
	a) If for some $s_0\in (0,S)$ and $t>0$
	\[
		\int_M\! \frac{\max(\sinh\left(\tfrac n4 (S-s_0)|\kappa|\mathcal A_{s_0}(x)\right),\sinh^2\left(\tfrac n4 (S-s_0)|\kappa|\mathcal A_{s_0}(x)\right),\mathcal B^2_{s_0}(x))}{\mu_{s_0}( B_{s_0}(x,\sqrt t))}\dd\mu_{s_0}(x) < \infty
	\]
	then the wave operators $\mathscr W_\pm(D_s,D_{s_0},I_{s_0,s})$ exist and are complete and one has $\spec_{\mathrm{ac}}(D_s) = \spec_{\mathrm{ac}}(D_{s_0})$ for all $s\in [s_0,S]$.\\
	b) If for some $s_0\in (0,S)$ and $t>0$
	\begin{equation}\label{eqn:ricci-flow-00}
		\int_M\! \frac{\max\left(\sinh\left(\tfrac n4 (S-s_0)|\kappa|\mathcal A_{s_0}(x)\right),\mathcal B_{s_0}(x)\right)}{\mu_{s_0}( B_{s_0}(x,\sqrt t))}\dd\mu_{s_0}(x) < \infty
	\end{equation}
	then the wave operators $\mathscr W_\pm(D^2_s,D^2_{s_0},I_{s_0,s})$ exist and are complete and one has $\spec_{\mathrm{ac}}(D^2_s) = \spec_{\mathrm{ac}}(D^2_{s_0})$ for all $s\in [s_0,S]$.
\end{theorem}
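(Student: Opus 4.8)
The plan is to apply Theorem~\ref{main} to the pair $(g,h)=(g_{s_0},g_s)$, for each fixed $s\in(s_0,S]$ separately (for $s=s_0$ the statement is trivial, the wave operators being the identity). For this we must verify, with all constants uniform in $s\in[s_0,S]$, that $g_{s_0}\sim g_s$ and that $g_{s_0},g_s$ are geodesically complete, that $|\omega_{g_{s_0},g_s}|+|R_{s_0}|_{s_0}+|R_s|_s$ is bounded, and that the weight $\Psi^{(1)}_{g_{s_0},g_s}$ (resp.\ $\Psi^{(2)}_{g_{s_0},g_s}$) satisfies the integrability hypothesis of Theorem~\ref{main}(a) (resp.\ (b)) for the given $t$. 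The heart of the matter is the two pointwise estimates $\delta_{g_{s_0},g_s}\le 2\sinh\!\big(\tfrac n4(S-s_0)|\kappa|\mathcal A_{s_0}\big)$ and $\omega_{g_{s_0},g_s}\le C\,\mathcal B_{s_0}$.

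For the first, fix $x\in M$ and $0\neq v\in T_xM$: the map $\sigma\mapsto\ln g_\sigma(v,v)$ has derivative $\kappa\,\mathrm{Ric}_\sigma(v,v)/g_\sigma(v,v)$, of modulus at most $|\kappa|\mathcal A_{s_0}(x)$ on $[s_0,S]$ by definition of $\mathcal A_{s_0}$. Integrating over $[s_0,s]$ bounds $\big|\ln(g_s(v,v)/g_{s_0}(v,v))\big|$ by $|\kappa|\mathcal A_{s_0}(x)(S-s_0)$, so every eigenvalue $\lambda$ of $\mathscr A^{g_{s_0}}_{g_s}(x)$ satisfies $|\ln\lambda|\le|\kappa|\mathcal A_{s_0}(x)(S-s_0)$ and \eqref{eqn:definition-delta-omega} yields the asserted bound on $\delta_{g_{s_0},g_s}(x)$. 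Repeating the computation with $\mathcal A_{s_0}(x)$ replaced by the crude bound $|\mathrm{Ric}_\sigma|_\sigma\le nC_0$ (a consequence of (iii)) shows $g_{s_0}\sim g_s$ with a quasi-isometry constant depending only on $n,\kappa,C_0,S$; the analogous estimate on $[0,s_0]$ (using (i) and (iii)) gives $g_0\sim g_{s_0}$, so both $g_{s_0}$ and $g_s$ are geodesically complete.

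The second estimate is the step I expect to be the main obstacle. Set $A_\sigma:=\partial_\sigma\nabla^\sigma$, which is a genuine tensor in $\Gamma_{C^\infty}(M,\Hom(TM,\End(TM)))$, being a difference quotient of connections. The standard variation formula for the Levi-Civita connection under $\partial_\sigma g_\sigma=\kappa\,\mathrm{Ric}_\sigma$ reads
\[
	g_\sigma\big(A_\sigma(v)u,w\big)=\tfrac\kappa2\big(\nabla^\sigma_v\mathrm{Ric}_\sigma(u,w)+\nabla^\sigma_u\mathrm{Ric}_\sigma(v,w)-\nabla^\sigma_w\mathrm{Ric}_\sigma(u,v)\big),
\]
whence $|A_\sigma|_\sigma(x)\le\tfrac{|\kappa|}2\mathcal B_{s_0}(x)$ for $\sigma\in[s_0,S]$; by (iii), $\mathcal B_{s_0}$ is bounded by a constant multiple of $C_1/s_0$, so $A_\sigma$ is bounded as well. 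Since $\nabla^{g_s}-\nabla^{g_{s_0}}=\int_{s_0}^sA_\sigma\,\dd\sigma$ and the $g_\sigma$-norms ($\sigma\in[s_0,s]$) are all comparable to the $g_{s_0}$-norm by the quasi-isometry just proved, we get
\[
	\omega_{g_{s_0},g_s}(x)=\big|\nabla^{g_s}-\nabla^{g_{s_0}}\big|_{g_{s_0}}(x)\le\int_{s_0}^s|A_\sigma|_{g_{s_0}}(x)\,\dd\sigma\le C\,\mathcal B_{s_0}(x),
\]
with $C=C(n,\kappa,C_0,S)$, and in particular $\omega_{g_{s_0},g_s}$ is bounded. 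Also $|R_{s_0}|_{s_0}\le C_0$ and $|R_s|_s\le C_0$ by (iii), and $\Psi_{g_{s_0}},\Psi_{g_s}\le(1+C_1/s_0)^2$ by (iii). The delicate points here are the justification of the variation formula on the noncompact manifold and the careful control of the passage between the $g_\sigma$-norms through the quasi-isometry bootstrap of the previous paragraph.

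It remains to assemble the weight bounds. Since $\delta_{g_{s_0},g_s}$ is itself uniformly bounded and $\Psi_{g_{s_0}},\Psi_{g_s}$ are bounded, the two pointwise estimates above give
\[
	\Psi^{(1)}_{g_{s_0},g_s}(x)\le C\max\!\Big(\sinh^2\!\big(\tfrac n4(S-s_0)|\kappa|\mathcal A_{s_0}(x)\big),\,\sinh\!\big(\tfrac n4(S-s_0)|\kappa|\mathcal A_{s_0}(x)\big),\,\mathcal B_{s_0}(x)^2\Big)
\]
and, because $\Psi^{(2)}$ involves $\omega$ rather than $\omega^2$,
\[
	\Psi^{(2)}_{g_{s_0},g_s}(x)\le C\max\!\Big(\sinh\!\big(\tfrac n4(S-s_0)|\kappa|\mathcal A_{s_0}(x)\big),\,\mathcal B_{s_0}(x)\Big),
\]
with $C$ independent of $s$. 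Dividing by $\mu_{s_0}(B_{s_0}(x,\sqrt t))$ and integrating, hypothesis (a) (resp.\ (b)) of the theorem becomes precisely the integrability hypothesis of Theorem~\ref{main}(a) (resp.\ (b)) for the pair $(g_{s_0},g_s)$, so Theorem~\ref{main} provides the existence and completeness of $\mathscr W_\pm(D_s,D_{s_0},I_{s_0,s})$ (resp.\ $\mathscr W_\pm(D_s^2,D_{s_0}^2,I_{s_0,s})$) together with the equality of absolutely continuous spectra, simultaneously for every $s\in[s_0,S]$ since all the bounds above are uniform in $s$.
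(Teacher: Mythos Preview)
Your proof is correct and follows essentially the same route as the paper's: verify the hypotheses of Theorem~\ref{main} for the pair $(g_{s_0},g_s)$ by establishing quasi-isometry via the flow equation, bounding $\delta_{g_{s_0},g_s}$ by $2\sinh\!\big(\tfrac n4(S-s_0)|\kappa|\mathcal A_{s_0}\big)$ through integrating $\partial_\sigma\ln g_\sigma(v,v)$, and bounding $\omega_{g_{s_0},g_s}$ by $C\mathcal B_{s_0}$ through the variation formula for the Levi-Civita connection and the fundamental theorem of calculus. The paper only writes out part~(b) and defers the $\delta$-estimate to \cite[Corollary~2.4]{gt}, whereas you supply that Gronwall-type argument explicitly and treat both parts; your concern about justifying the variation formula on a noncompact manifold is unfounded, since it is a pointwise tensorial identity.
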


\begin{proof}
	We will only prove b) leaving the apparent modifications for the proof of a) to the reader. The symbol $C$ will stand for a positive constant whose actual value might change from line to line but whose existence is assured by the assumptions of the theorem.
	
	Firstly, it is well known that the Ricci flow equation in conjunction with \textrm{(i)} implies that $g_s\sim g_0$ for all $s\in[0,S]$ (see, e.g., the proof of Theorem\ 1.2 in \cite{shi}). In particular, all $g_s$ are geodesically complete.
	
	Next, we restrict to a subfamily $(g_s)_{s\in[s_0,S]}$ of metrics, where $s_0\in (0,S)$, so that by assumption \textrm{(iii)} we have
	\begin{align}\label{eqn:ricci-flow-01}
		|\nabla^sR_s|_s \leq C \quad \text{ for all } s\in [s_0,S]\,,
	\end{align}
	which implies, in particular, that $\Psi_s$ is bounded.
		
	It remains to prove that $\omega_{s_0,s}$ is bounded and that the integrability \eqref{eqn:ricci-flow-00} implies
	\begin{equation}\label{eqn:ricci-flow-02.5}
		\int_M\!\Psi^{(2)}_{s_0,s}(x)\mu_{s_0}( B_{s_0}(x,\sqrt t))^{-1}\dd\mu_{s_0}(x) < \infty
	\end{equation}
	for all $s\in[s_0,S]$.
	
	We refer the reader to the proof of \cite[Corollary 2.4]{gt} for the (using Gronwall's lemma, simple to prove) fact that
	\[
		\delta_{s_0,s}(x) \leq 2\sinh(\tfrac n4 (S-s_0)|\kappa|\mathcal A_{s_0}(x))\,,
	\]
	so that
	\begin{equation}\label{eqn:ricci-flow-03}
		\max(\delta_{s_0,s}(x)\Psi_s(x),\delta_{s_0,s}(x)\Psi_{s_0}(x))	\leq C \cdot \sinh(\tfrac n4 (S-s_0)|\kappa|\mathcal A_{s_0}(x))\,.
	\end{equation}
	
	The fundamental theorem of calculus gives us (pointwise)
	\[
		\nabla^s-\nabla^{s_0} = \int_{s_0}^{s}\frac{\partial}{\partial \sigma}\nabla^\sigma\dd\sigma\,.
	\]

	Taking norms, we get
	\[
		|\nabla^s-\nabla^{s_0}|_{s_0}\leq \int_{s_0}^{s}\left|\frac{\partial}{\partial \sigma}\nabla^\sigma \right|_{s_0}\dd\sigma \leq  \int_{s_0}^{s}|\mathscr  A'^{-1}_{\sigma,s_0}||\mathscr  A_{\sigma,s_0}|^{1/2}\left|\frac{\partial}{\partial \sigma}\nabla^\sigma \right|_{\sigma}\dd\sigma\leq C\int_{s_0}^{s}\left|\frac{\partial}{\partial \sigma}\nabla^\sigma \right|_{\sigma}\dd\sigma\,,
	\]
	where the last inequality follows again from $g_{s_0}\sim g_{\sigma}$.
	
	Letting $(e_1,\ldots,e_n)$ be a $g_\sigma$-orthonormal basis of $T_xM$ and using \cite[Proposition 2.3.1]{topping}, we calculate
	\begin{align*}
	\left|\frac{\partial}{\partial \sigma}\nabla^\sigma \right|^2_{\sigma} &= \sum_{i,j,k=1}^{n}g_\sigma\left(\frac{\partial}{\partial \sigma}\nabla^\sigma_{e_i}e_j,e_k\right)^2 = \sum_{i,j,k=1}^{n} \left(\nabla^\sigma_{e_j}\mathrm{Ric}(e_i,e_k) +\nabla^\sigma_{e_i}\mathrm{Ric}(e_j,e_k)-\nabla^\sigma_{e_k}\mathrm{Ric}(e_i,e_j) \right)^2\\
	&\leq n^3 \mathcal B^2_{s_0}(x)\,,
	\end{align*}
	which implies
	\begin{equation}\label{eqn:ricci-flow-04}
		\omega_{s_0,s}(x)\leq C \cdot \mathcal B_{s_0}(x)\,.
	\end{equation}
	Note that $\omega_{s_0,s}$ is bounded since by \eqref{eqn:ricci-flow-01}
	\[
		\sup_{x\in M} \mathcal B_{s_0}(x) < \infty\,.
	\]
	
	Now \eqref{eqn:ricci-flow-02.5} follows from \eqref{eqn:ricci-flow-00}, \eqref{eqn:ricci-flow-03} and \eqref{eqn:ricci-flow-04}.
\end{proof}

\appendix
\section{Stochastic Analysis}
The material presented in this section is mostly standard and can be found, e.g., in \cite{ikeda}. \\
Let $(\Omega, \IFF, \IFF_*, \IP)$ be a filtered probability space, which means that $(\Omega, \IFF, \IP)$ is a probability space $\IFF_*=(\IFF_t)_{t\geq 0}$ is a filtration of the $\sigma$-algebra $\IFF$ by sub-$\sigma$-algebras. We will assume that $(\Omega, \IFF, \IFF_*, \IP)$ satisfies the so called usual conditions, which means that $\IFF_*$ is right continuous in the sense that 
$$
\IFF_s=\bigcap _{s<t} \IFF_t \quad\text{ for all $s\geq 0$,}
$$
and that $\IFF_0$ contains all sets $N\in\IFF$ with $\IP(N)=0$. This assumptions guarantees that one can pick continuous versions of sufficiently well-behaved processes which are defined on this probability space.\\
A \emph{stopping time} is a random variable, that is, an ($\IFF$-) measurable map
$$
\tau:\Omega\longrightarrow [0,\infty]
$$
such that $\{\tau\leq t\}\in\IFF_t$ for all $t\geq 0$. A map with values in a topological space (equipped with its Borel-$\sigma$-algebra)
$$
\mathsf{Y}:[0,\infty)\times \Omega\longrightarrow \mathscr{X}
$$
is called a \emph{process}, if for all $t\geq 0$ the map 
$$
\mathsf{Y}_t:\Omega\longrightarrow \mathscr{X}
$$
is measurable. Then the maps
$$
\mathsf{Y}(\omega):[0,\infty)\longrightarrow \mathscr{X},\quad \omega\in\Omega,
$$
are called the \emph{paths} of $\mathsf{Y}$. The process $\mathsf{Y}$ is called \emph{adapted}, if $\mathsf{Y}_t$ is $\IFF_t$-measurable for all $t\geq 0$. If $\mathsf{Y}$ adapted with continuous paths, if $\mathscr{X}$ is completely metrizable, and $U
\subset \mathscr{X}$ is an open subset, then the \emph{first exit time}  
$$
\tau^U_{\mathsf{Y}} :=\inf\{t\geq 0: \mathsf{Y}_{t}  \notin U\}: \Omega \longrightarrow [0,\infty]
$$
of $\mathsf{Y}$ from $U$ provides an example of a stopping time.\\
Assume from now on that $\mathscr{X}$ is a finite dimensional Hilbert space. Given $t\geq 0$, the orthogonal projection
$$
L^2(\Omega,\IFF,\IP; \mathscr{X} )\longrightarrow L^2(\Omega,\IFF_t,\IP;\mathscr{X})
$$
extends uniquely to a bounded linear map
$$
\pi_t:L^1(\Omega,\IFF,\IP;\mathscr{X})\longrightarrow L^1(\Omega,\IFF_t,\IP;\mathscr{X}),
$$
called the \emph{conditional expectation given $\IFF_t$}. One has 
\begin{align}\label{aasooo}
\int \pi_t f\dd \IP = \int f \dd \IP\quad\text{for all $t\geq 0$, $f\in L^1(\Omega,\IFF,\IP;\mathscr{X})$}.
\end{align}
Then a process
$$
\mathsf{Y}:[0,\infty)\times \Omega\longrightarrow \mathscr{X}
$$ 
with 
$$
\int |\mathsf{Y}_t |\dd \IP<\infty\quad\text{for all $t\geq 0$}
$$
is called a \emph{martingale}, if for all $0\leq s\leq t$ one has
\begin{align}\label{aasooo2}
\mathsf{Y}_s=\pi_s \mathsf{Y}_t,
\end{align}
and a \emph{local martingale}, if it is adapted and there exists a strictly increasing sequence of stopping times $\tau_n\nearrow\infty$ such that for all $n\in\IN$ the stopped process 
$$
\mathsf{Y}^{\tau_n}:[0,\infty)\times \Omega\longrightarrow \mathscr{X},\quad \mathsf{Y}^{\tau_n}_t:=\mathsf{Y}_{t\wedge \tau}
$$
is a martingale. Any martingale is adapted and a local martingale. A local martingale $\mathsf{Y}$ is a martingale, if it is uniformly integrable in the following sense:
$$
\int \sup_{r\in [0,t]} |\mathsf{Y}_t| \dd \IP<\infty\quad\text{for all $t>0$}.
$$
Note that if $\mathsf{Y}$ is a martingale, then in view of (\ref{aasooo}) and (\ref{aasooo2}) one has
$$
\int \mathsf{Y}_t \dd \IP= \int \pi_0\mathsf{Y}_t \dd \IP= \int \mathsf{Y}_0 \dd \IP,
$$
so that martingales have a constant expectation.\\
A process
$$
\mathsf{W}:[0,\infty)\times \Omega\longrightarrow \mathscr{X}
$$ 
with continuous paths is called an \emph{adapted Euclidean Brownian motion starting in $x_0\in \mathscr{X}$}, if it is adapted and the transition density with respect to the Lebesgue measure on $\mathscr{X}$ is given by the Gauss kernel 
$$
(0,\infty)\times \mathscr{X}\ni (t,y) \longmapsto e^{-t\Delta}(x_0,y)=(4\pi t)^{-\frac{\dim(\mathscr{X})}{2}}e^{-\frac{|x_0-y|^2}{4t}}\in (0,\infty), 
$$
in the sense that for any $m\in\IN$, any finite sequence of times $0<t_1< \dots< t_m$ and all Borel sets $A_1,\dots,A_m\subset \mathscr{X}$, setting $\delta_j:=t_{j+1}-t_j$ with $t_0:=0$, one has
\begin{align*}
\IP\{\mathsf{W}_{t_1}(x_0)\in A_1,\dots,\mathsf{W}_{t_m}(x_0)\in A_m\} =\int\cdots \int 1_{A_1}(x_1)e^{-\delta_0\Delta}(x_0,x_1) \cdots 
 1_{A_m}(x_m) e^{-\delta_{m-1}\Delta}(x_{m-1},x_m) \dd x_1\cdots \dd x_m.
\end{align*}
Above, $\Delta\geq 0$ denotes (unique self-adoint realization of) the Euclidean Laplace-Operator in $L^2(\mathscr{X})$, and as the notation indicates, the Gauss kernel is the integral kernel of the associated heat semigroup. This observation allows to define Brownian motion on smooth Riemannian manifolds, too. \\
Any adapted Euclidean Brownian motion $\mathsf{W}$ turns out to be a martingale with
$$
\int |\mathsf{W}_t |^2\dd \IP<\infty\quad\text{for all $t\geq 0$}.
$$
Assume now $\mathscr{X}_1$, $\mathscr{X}_2$, $\mathscr{X}_3$ are finite dimensional Hilbert spaces, 
$$
\mathsf{W}:[0,\infty)\times \Omega\longrightarrow \mathscr{X}_1
$$
is an adapted Euclidean Brownian motion, 
$$
\mathsf{Y}:[0,\infty)\times \Omega\longrightarrow \mathscr{X}_2
$$
is an adapted process with continuous paths, and 
$$
\mathscr{G}\in \Hom\big(\mathscr{X}_1,\Hom(\mathscr{X}_2,\mathscr{X}_3)\big).
$$
Then
$$
\int^{\bullet}_0  \mathscr{G}(\dd \mathsf{W}_s) \mathsf{Y}_s: [0,\infty)\times \Omega\longrightarrow \mathscr{X}_3
$$
denotes the \emph{Ito stochastic integral}, which can be defined as the uniquely given local martingale with continuous paths starting in $0$ which for all $t\geq 0$ satisfies
\begin{align}\label{spast}
\int^{t}_0  \mathscr{G}(\dd \mathsf{W}_s) \mathsf{Y}_s=\underset{n\to\infty}{\mathrm{l.i.p.}}  \      \sum^n_{k=1} \mathscr{G}\big(W_{\frac{kt}{n}}-W_{\frac{(k-1)t}{n}}\big) Y_{\frac{(k-1)t}{n}},
\end{align}
where $\mathrm{l.i.p.}$ stands for the limit in probability, that is.
$$
\underset{n\to\infty}{\mathrm{l.i.p.}} \ X_n=X\quad\text{if and only if for all $\epsilon>0$ one has $\lim_{n\to\infty}P(|X_n-X|>\epsilon)=0$.}
$$
In (\ref{spast}) it is not important to take the uniform partition of $[0,t]$ (as we did), in the sense that one can take any sequence of partitions whose mesh tends to zero. It is important, however, to  evaluate $Y$ in the left points of the underlying sequence of partitions. For example, taking $\frac{1}{2}(Y_{\frac{kt}{n}}-Y_{\frac{(k-1)t}{n}})$ instead of $Y_{\frac{(k-1)t}{n}}$ would lead to the so called \emph{Stratonovic stochastic integral}.\vspace{1mm}

In the above situation, the \emph{Burkholder-Davis-Gundy inequality} states that for all $p\in (0,\infty)$ there exists a constant $C_p<\infty$ such that for all finite stopping times $\tau$ one has 
\begin{align}\label{bdg}
\int \left|\sup_{r\in [0,\tau]}\int^{r}_0  \mathscr{G} (\dd\mathsf{W}_s) Y_s\right|^p  \dd \IP\leq C_p \int \left(\int^{\tau}_0|Y_s|^2 \dd s \right)^{p/2}\dd \IP\in [0,\infty].
\end{align}
This estimate follows, e.g., from Theorem 1 in \cite{ren2008}.\\
Finally we remark that, with obvious modifications, all of the above definitions and results carry over the case where one replaces $[0,\infty)$ with a time horizon of the form $[0,T]$, where $T>0$.

\section{The Belopol\rq{}skii-Birman Theorem}\label{saab}

We recall that given a self-adjoint operator $H$ in a complex Hilbert space $\IHH$, the closed subspace of $H$-absolutely continuous states is defined as
$$
\IHH^{\mathrm{ac}}(H):=\{\psi\in\IHH: \left\| E_H(\cdot)\psi\right \|^2\text{ is absolutely continuous w.r.t. Lebesgue measure} \}\subseteq \IHH,
$$
where $E_H$ denotes the spectral measure of $H$. Then $\IHH^{\mathrm{ac}}(H) $ reduces $H$ and the spectrum of the induced self-adjoint operator in $\IHH^{\mathrm{ac}}(H)$ is denoted by $\mathrm{Spec}_{\mathrm{ac}}(H)$ and called the \emph{absolutely continuous spectrum of $H$.} We denote with 
$$
\pi_{\mathrm{ac}}(H): \IHH\longrightarrow \IHH^{\mathrm{ac}}(H)
$$ 
the orthogonal projection.

\begin{theorem}\label{belo}\emph{(Belopol\rq{}skii-Birman)} For $k=1,2$, let $H_k$ be a self-adjoint operator in a complex Hilbert space $\IHH_k$, where $\pi_{\mathrm{ac}}(H_k)$ denotes the projection onto the $H_k$-absolutely continuous subspace of $\IHH_k$. Assume that $I\in\ILL(\IHH_1, \IHH_2)$ is a bounded operator which has a two-sided bounded inverse.\\
a) Assume that $H_k\geq 0$ and that
\begin{itemize}
\item one has $I\dom(\sqrt{H_1})=\dom(\sqrt{H_2})$,
\item the operator
\begin{align*}
(I^*I-1)\exp(-sH_1):\IHH_1\to\IHH_1 \>\>\text{ is Hilbert-Schmidt (or more generally: compact) for some $s>0$},
\end{align*}
\item there exists a trace class operator $\mathscr{T}:\IHH_1\to \IHH_2$ and a number $s>0$ such that for all $f_2\in\dom(H_2)$, $f_1\in\dom(H_1)$ one has
\begin{align*}
\left\langle f_2 ,\mathscr{T}f_1\right\rangle=\left\langle H_2f_2, \exp(-sH_{2}) I \exp(-sH_{1})f_1\right\rangle -\left\langle f_2, \exp(-sH_{2}) I \exp(-sH_{1}) H_1f_1\right\rangle. 
\end{align*}
\end{itemize}
Then the wave operators 
$$
\mathscr{W}_{\pm}(H_{2},H_1, I)=\slim_{t\to\pm\infty}\exp(itH_{2})I\exp(-itH_{1})\pi_{\mathrm{ac}}(H_1)
$$
exist\footnote{$\slim_{t\to\pm\infty}$ stands for the strong limit.} and are complete, where completeness means that 
$$
\left(\mathrm{Ker} \: \mathscr{W}_{\pm}(H_{2},H_1, I)\right)^{\perp}=\IHH^{\mathrm{ac}}_1(H_1), \quad\overline{\mathrm{Ran} \: \mathscr{W}_{\pm}(H_{2},H_1, I)}=\IHH^{\mathrm{ac}}_2(H_2).
$$
Moreover, $\mathscr{W}_{\pm}\big(H_{2},H_1, I\big)$ are partial isometries with inital space $\IHH^{\mathrm{ac}}_1(H_1)$ and final space $\IHH^{\mathrm{ac}}_2(H_2)$, and one has $\mathrm{Spec}_{\mathrm{ac}}(H_1)=\mathrm{Spec}_{\mathrm{ac}}(H_2)$.\vspace{1mm}

b) Assume that 
\begin{itemize}
\item one has $I\dom(H_1)=\dom(H_2)$,
\item the operator
\begin{align*}
(I^*I-1)\exp(-sH_1^2):\IHH_1\to\IHH_1 \>\>\text{ is Hilbert-Schmidt (or more generally: compact) for some $s>0$},
\end{align*}
\item there exists a trace class operator $\mathscr{R}:\IHH_1\to \IHH_2$ and a number $s>0$ such that for all $f_2\in\dom(H_2)$, $f_1\in\dom(H_1)$ one has
\begin{align*}
\left\langle f_2 ,\mathscr{R}f_1\right\rangle =\left\langle H_2f_2, \exp(-sH_{2}^2) I \exp(-sH_{1}^2)f_1\right\rangle  -\left\langle f_2, \exp(-sH_{2}^2) I \exp(-sH_{1}^2) H_1f_1\right\rangle . 
\end{align*}
\end{itemize}
Then the same conclusions as in a) holds.
\end{theorem}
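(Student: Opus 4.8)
The plan is to reduce part~b) to the classical trace-class theory of wave operators with identification operators; part~a) will come out of the same scheme in its easier, monotone form. Fix $s>0$ as in the hypotheses, set $J:=\mathrm{e}^{-sH_2^2}\,I\,\mathrm{e}^{-sH_1^2}\in\ILL(\IHH_1,\IHH_2)$, and write $\phi(\lambda):=\mathrm{e}^{-s\lambda^2}$, so $\mathrm{e}^{-sH_k^2}=\phi(H_k)$. The first step is to observe that, since $\phi$ is bounded and strictly positive, $\phi(H_k)$ is a bounded, injective, positive operator with dense range that commutes with $\mathrm{e}^{\mathrm{i}tH_k}$ and with every spectral projection of $H_k$; in particular it reduces $\IHH_k^{\mathrm{ac}}(H_k)$ and acts injectively with dense range there. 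I would also record that the bilinear identity in hypothesis~(3) says \emph{precisely} that $J$ maps $\dom(H_1)$ into $\dom(H_2)$ and that $H_2J-JH_1=\mathscr{R}$ on $\dom(H_1)$ with $\mathscr{R}$ trace class: $Jf_1$ lies in $\dom(\mathrm{e}^{sH_2^2})\subseteq\dom(H_2)$, so $\langle H_2f_2,Jf_1\rangle=\langle f_2,H_2Jf_1\rangle$ and the identity becomes $\langle f_2,\mathscr{R}f_1\rangle=\langle f_2,(H_2J-JH_1)f_1\rangle$.

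The second step is the heart of the matter. Because $J$ is bounded and $H_2J-JH_1$ extends to a trace-class operator, a Cook-type argument --- cutting $\mathrm{e}^{-\mathrm{i}tH_1}$ down by spectral projections of $H_1$ over bounded intervals and invoking the Rosenblum--Pearson lemma, which forces $\int^{\pm\infty}\|\mathscr{R}\,\mathrm{e}^{-\mathrm{i}tH_1}E_{H_1}(\Delta)\psi\|\,\dd t<\infty$ for $\psi$ from a dense subset of $\IHH_1^{\mathrm{ac}}(H_1)$ exactly because $\mathscr{R}\in\ILL^1$ --- gives the existence of $\mathscr{W}_{\pm}(H_2,H_1;J)$. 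Running the same argument for the bounded identification $J^{\dagger}:=\mathrm{e}^{-sH_1^2}\,I^{-1}\,\mathrm{e}^{-sH_2^2}$, whose intertwining defect $H_1J^{\dagger}-J^{\dagger}H_2$ equals $-\mathrm{e}^{-sH_1^2}I^{-1}\mathscr{R}I^{-1}\mathrm{e}^{-sH_2^2}\in\ILL^1$ (here the boundedness of $I^{-1}$ is used), produces $\mathscr{W}_{\pm}(H_1,H_2;J^{\dagger})$. I would then bring in the compactness hypothesis on $(I^*I-1)\mathrm{e}^{-sH_1^2}$: up to bounded factors it gives the compactness of $J^{\dagger}J-\mathrm{e}^{-2sH_1^2}$ and of $JJ^{\dagger}-\mathrm{e}^{-2sH_2^2}$, which allows one to evaluate the products $\mathscr{W}_{\pm}(H_1,H_2;J^{\dagger})\mathscr{W}_{\pm}(H_2,H_1;J)$ and $\mathscr{W}_{\pm}(H_2,H_1;J)\mathscr{W}_{\pm}(H_1,H_2;J^{\dagger})$ modulo compact, intertwining errors that vanish on the absolutely continuous subspaces; this upgrades the two one-sided existence statements to completeness, showing that $\mathscr{W}_{\pm}(H_2,H_1;J)$ is a partial isometry of $\IHH_1^{\mathrm{ac}}(H_1)$ onto $\IHH_2^{\mathrm{ac}}(H_2)$, whence $\operatorname{Spec}_{\mathrm{ac}}(H_1)=\operatorname{Spec}_{\mathrm{ac}}(H_2)$.

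The third step, which I expect to be the genuine obstacle, is to remove the regularizing factors, i.e.\ to pass from $J=\phi(H_2)\,I\,\phi(H_1)$ back to $I$. From $\mathrm{e}^{\mathrm{i}tH_2}J\mathrm{e}^{-\mathrm{i}tH_1}=\phi(H_2)\big(\mathrm{e}^{\mathrm{i}tH_2}I\mathrm{e}^{-\mathrm{i}tH_1}\big)\phi(H_1)$ one sees the outer factors only \emph{help} the time-limit, so this direction is not formal: $\phi(H_k)$ has dense range but is not boundedly invertible, and weak-compactness arguments alone give only weak convergence of $\mathrm{e}^{\mathrm{i}tH_2}I\mathrm{e}^{-\mathrm{i}tH_1}$. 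The clean route is the stationary representation: both $\mathscr{W}_{\pm}(H_2,H_1;J)$ and $\mathscr{W}_{\pm}(H_2,H_1;I)$ are given by the same boundary-value spectral integral over $\R$ up to the scalar weight $\phi(\lambda)=\mathrm{e}^{-s\lambda^2}$, which is bounded and a.e.\ nonzero and hence irrelevant to existence and completeness on the absolutely continuous subspaces; equivalently one applies Birman's refinement of the invariance principle to $\lambda\mapsto\mathrm{e}^{-s\lambda^2}$, which is real analytic and strictly monotone on each of $(-\infty,0)$ and $(0,\infty)$, with critical set $\{0\}$ Lebesgue-null and thus invisible to the absolutely continuous parts. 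This yields existence and completeness of $\mathscr{W}_{\pm}(H_2,H_1;I)$ with the stated initial and final spaces and the equality of absolutely continuous spectra, which is part~b). Finally, part~a) is the same scheme with $\phi(\lambda)=\mathrm{e}^{-s\lambda}$ and operator domains replaced by form domains; it is genuinely easier because there $\phi$ is monotone on all of $\sigma(H_k)\subseteq[0,\infty)$, so the last step is the textbook invariance principle rather than its piecewise version.
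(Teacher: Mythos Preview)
The paper does not prove this theorem: its entire proof is the sentence ``See the appendix of \cite{gt} and the references therein.'' So there is nothing to compare against beyond the standard Belopol'skii--Birman machinery that \cite{gt} records. Your overall architecture---apply trace-class scattering theory to a regularized identification, then undo the regularization via the invariance principle---is indeed that machinery, but two of your steps do not go through as written.

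First, your formula for the intertwining defect of $J^{\dagger}=\mathrm{e}^{-sH_1^2}I^{-1}\mathrm{e}^{-sH_2^2}$ is algebraically wrong: expanding $-\mathrm{e}^{-sH_1^2}I^{-1}\mathscr{R}I^{-1}\mathrm{e}^{-sH_2^2}$ does not collapse to $H_1J^{\dagger}-J^{\dagger}H_2$, because the inner factors $I^{-1}\mathrm{e}^{-sH_2^2}I\mathrm{e}^{-sH_1^2}$ do not cancel. The correct companion is $J^{*}=\mathrm{e}^{-sH_1^2}I^{*}\mathrm{e}^{-sH_2^2}$, whose defect is $-\mathscr{R}^{*}\in\mathscr{L}^1$ on the nose. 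Relatedly, the compactness of $J^{\dagger}J-\mathrm{e}^{-2sH_1^2}=\mathrm{e}^{-sH_1^2}I^{-1}\mathrm{e}^{-2sH_2^2}I\mathrm{e}^{-sH_1^2}-\mathrm{e}^{-2sH_1^2}$ does not follow ``up to bounded factors'' from the compactness of $(I^{*}I-1)\mathrm{e}^{-sH_1^2}$; there is no way to turn $I^{-1}\mathrm{e}^{-2sH_2^2}I$ into $I^{*}I$ without further input. For the same reason, your conclusion at the end of step~2 that $\mathscr{W}_{\pm}(H_2,H_1;J)$ is a partial isometry from $\mathscr{H}_1^{\mathrm{ac}}(H_1)$ onto $\mathscr{H}_2^{\mathrm{ac}}(H_2)$ cannot be right: since $J=\phi(H_2)I\phi(H_1)$ with $\phi(H_1)$ a strict contraction on $\mathscr{H}_1^{\mathrm{ac}}$, one has $\|\mathscr{W}_{\pm}(H_2,H_1;J)\psi\|\le\|\phi(H_1)\psi\|<\|\psi\|$ in general.

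Second, your step~3 conflates two distinct mechanisms. The invariance principle trades the \emph{operators} $H_k$ for $\phi(H_k)$ while keeping the identification fixed; it does not let you trade the identification $J=\phi(H_2)I\phi(H_1)$ for $I$ while keeping $H_k$ fixed. What you have established in step~2 is the existence of $\mathscr{W}_{\pm}(H_2,H_1;J)$, and the identity $\mathrm{e}^{itH_2}J\mathrm{e}^{-itH_1}=\phi(H_2)\bigl(\mathrm{e}^{itH_2}I\mathrm{e}^{-itH_1}\bigr)\phi(H_1)$ only shows that existence for $I$ \emph{implies} existence for $J$, not the converse you need. The route taken in \cite{gt} (and in the classical references) is different: one first proves that $\phi(H_2)I-I\phi(H_1)$ itself is trace class---this is where the hypothesis on $\mathscr{R}$ really enters, via a Duhamel-type integration---then applies the standard Belopol'skii--Birman theorem directly to the pair $(\phi(H_2),\phi(H_1))$ with identification $I$, obtaining $\mathscr{W}_{\pm}(\phi(H_2),\phi(H_1);I)$; the compactness of $(I^{*}I-1)\phi(H_1)$ is precisely Birman's isometry condition in that setting. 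Only then does the invariance principle for the admissible function $\phi$ yield $\mathscr{W}_{\pm}(H_2,H_1;I)$ with all the stated properties.
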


\begin{proof} See the appendix of \cite{gt} and the references therein.
\end{proof}

%We recall \cite{Re3, weidmann2} that given a self-adjoint operator $H$  in a Hilbert space $\IHH$ with its operator valued spectral measure $E_H$, one defines the \emph{$H$-absolutely continuous subspace $\IHH_{\mathrm{ac}}(H)$ of $\IHH$} to be the space of all $f\in \IHH$ such that the Borel measure $\left\|E_H(\cdot)f\right\|^2$ on $\IR$ is absolutely continuous with respect to the Lebesgue measure. Then $\IHH_{\mathrm{ac}}(H)$ becomes a closed subspace of $\IHH$ and the restriction $H_{\mathrm{ac}}$ of $H$ to $\IHH_{\mathrm{ac}}(H)$ is a well-defined self-adjoint operator. The \emph{absolutely continuous spectrum $\mathrm{spec}_{\mathrm{ac}}(H)$ of $H$} is defined to be the spectrum of $H_{\mathrm{ac}}$.\smallskip

%We record a version of the abstract Belopol\rq{}skii-Birman theorem for two Hilbert space scattering theory, which is well-suited for our purpose:

\end{document}